\newtheorem{theorem}{Theorem}[section]
\newtheorem{lemma}[theorem]{Lemma}
\newtheorem{proposition}[theorem]{Proposition}
\newtheorem{corollary}[theorem]{Corollary}
\theoremstyle{plain}
\newtheorem{definition}[theorem]{Definition}
\newtheorem{remark}[theorem]{Remark}
\renewcommand{\theequation}{\arabic{section}.\arabic{equation}}
\newcounter{subeqn}
\renewcommand{\thesubeqn}{\theequation\alph{subeqn}}
\newcommand{\subeqn}{%
  \refstepcounter{subeqn}
  \tag{\thesubeqn}
}
\newcommand{\newseq}{%
  \refstepcounter{equation}
}
\newcommand{\nc}{\newcommand}
\nc{\cat}{\mathcal{V}}
\nc{\func}{\EuScript{T}}
\nc{\res}{\operatorname{res}}
\renewcommand{\dim}{\operatorname{dim}}
\nc{\slehat}{\mathfrak{\widehat{sl}}_e}
\nc{\sllhat}{\mathfrak{\widehat{sl}}_\ell}
\nc{\glehat}{\mathfrak{\widehat{gl}}_e}
\nc{\slnhat}{\mathfrak{\widehat{sl}}_n}
\nc{\glnhat}{\mathfrak{\widehat{gl}}_n}
\nc{\eE}{\EuScript{E}}
\newcommand{\arxiv}[1]{\href{http://arxiv.org/abs/#1}{\tt arXiv:\nolinkurl{#1}}}
\nc{\eF}{\EuScript{F}}
\nc{\fF}{\mathfrak{F}}
\nc{\fE}{\mathfrak{E}}
\nc{\fI}{\mathfrak{I}}
\newcommand{\K}{\mathbbm{k}}
\newcommand{\Z}{\mathbb{Z}}
\newcommand{\Q}{\mathbb{Q}}
\nc{\Qlb}{\mathbb{\bar \Q}_\ell}
\nc{\Fq}{\mathbb{F}_q}
\nc{\Fqb}{\mathbb{\bar F}_q}
\nc{\walg}{W}
\newcommand{\R}{\mathbb{R}}
\newcommand{\C}{\mathbb{C}}
\nc{\KZ}{\mathsf{KZ}}
\newcommand{\la}{\leftarrow}
\newcommand{\sS}{\mathsf{S}} 
\newcommand{\sT}{\mathsf{T}}
\nc{\Bv}{\mathbf{v}}
\nc{\Gr}{\operatorname{Gr}}
  \nc{\Bw}{\mathbf{w}}
  \nc{\Bb}{\mathbf{b}}
\nc{\tU}{\mathcal{U}}
\nc{\Bu}{\mathbf{u}}
 \nc{\Fl}{\mathscr{F}\!\ell}
\nc{\Tr}{\operatorname{Tr}}
\nc{\sheafK}{\EuScript{K}}
\nc{\bmu}{\boldsymbol{\mu}}
\nc{\bpi}{\boldsymbol{\pi}}
\nc{\dwalg}{\mathbb{W}}
\nc{\dalg}{\mathbb{T}}
\nc{\aalg}{\mathbb{A}}
\nc{\alm}{\mathscr{A}}
\nc{\bra}{\mathscr{B}}
\nc{\bO}{\mathbb{O}}
\nc{\Kos}{\EuScript{K}}
\nc{\tilt}{\EuScript{T}}
\renewcommand{\la}{\lambda}
\newcommand{\Hom}{\operatorname{Hom}}
\newcommand{\RHom}{\mathbb{R}\operatorname{Hom}}
\newcommand{\cO}{\mathcal{O}}
\newcommand{\Ext}{\operatorname{Ext}}
\newcommand{\bS}{\mathbb{S}}
\newcommand{\excise}[1]{}
\newcommand{\End}{\operatorname{End}}
\newcommand{\mmod}{\operatorname{-mod}}
\newcommand{\gmod}{\operatorname{-gmod}}
\newcommand{\Lotimes}{\overset{L}{\otimes}}
\newcommand{\thetitle}{Tensor product algebras, Grassmannians and
  Khovanov homology}
\begin{document}

\renewcommand{\theitheorem}{\Alph{itheorem}}

\usetikzlibrary{decorations.pathreplacing,backgrounds,decorations.markings,calc,
shapes.geometric, patterns}
\tikzset{wei/.style={draw=red,double=red!40!white,double distance=1.5pt,thin}}

\noindent {\Large \bf 
\thetitle}
\bigskip\\
{\bf Ben Webster}\footnote{Supported by the NSF under Grant DMS-1151473.}\\
Department of Mathematics, University of Virginia, Charlottesville, VA
\bigskip\\
{\small
\begin{quote}
\noindent {\em Abstract.}
We discuss a new perspective on Khovanov homology, using
categorifications of tensor products.  While in many ways
more technically demanding than Khovanov's approach (and its extension
by Bar-Natan), this has distinct advantage of directly connecting
Khovanov homology to a categorification of $(\mathbb{C}^2)^{\otimes
  \ell}$, and admitting a direct generalization to other Lie
algebras.  

While the construction discussed is a special case of that given in
earlier work of the author,
this paper contains new results about the 
case of $\mathfrak{sl}_2$ showing an explicit connection to
Bar-Natan's approach to Khovanov homology, to the geometry of
Grassmannians, and to the categorified Jones-Wenzl projectors of
Cooper and Krushkal.  In particular, we show that the colored Jones
homology defined by our approach coincides with that of Cooper and
Krushkal.
\end{quote}
}
\bigskip

\section{Introduction}
\label{sec:introduction}

{\small \it 
\begin{quotation}
Man is a knot, a web, a mesh into which relationships are tied.

\hfill --Antoine Saint-Exupery (1942)
\end{quotation}
}

Khovanov homology has proven one of the most remarkable constructions
of recent years, and has stimulated a great deal of work in the field
of knot homology.  Khovanov homology is a categorification of
the Jones polynomial \cite{Jon87}, which is a special case (for the
defining representation $\mathbb{C}^2$ of $\mathfrak{sl}_2$) of the
Reshetikhin-Turaev invariants attached to representations of simple
Lie algebras\footnote{The most common construction of these invariants
uses deformations of these representations to modules over the quantum
group associated to a Lie algebra.  Thoughout, we'll use the name of a
Lie algebra, usually $\mathfrak{sl}_2$, to also refer to other constructions
based on its Cartan matrix, like quantum groups.}.  This leads to the natural question, which has attracted a great
deal of attention, of whether the Reshetikhin-Turaev invariants for
other Lie algebras and representations have categorifications like
Khovanov homology; a general construction of such invariants was given
by the author in \cite{Webmerged}, building on a decade's worth of work
by many authors.

From the original construction of Khovanov homology, it's not easy to
see why this should be possible.  The Reshetikhin-Turaev construction
is based on the ribbon structure on the tensor category of
$U_q(\mathfrak{g})$, but the early definitions of
Khovanov homology had no clear connection to tensor products of
representations of $U_q(\mathfrak{sl}_2)$.  Our intent in this note is to
sketch out a new construction of Khovanov homology which can be
generalized to other representations of other Lie algebras.  

This construction is a special case of that given in \cite{Webmerged};
 following that paper, it will first be described in Section
 \ref{sec:tens-prod-algebr} in purely algebraic language, introducing
 certain diagrammatic algebras $T^\ell$ ({\it \`a la} Khovanov-Lauda \cite{KLI}) whose representation categories categorify
 the tensor power $(\C^2)^{\otimes \ell}$ of the defining
 representation of $\mathfrak{sl}_2$
 or its quantum analogue (in a sense that we will make more precise).  
The results of that section are with a few exceptions special cases of
those of \cite{Webmerged}, and many of the proofs will be farmed out.

Another part of our aim is also to describe the relationship of this
construction with geometry, which is discussed in Section
\ref{sec:geom-grassm}. In the case where $\mathfrak{g}=\mathfrak{sl}_2$,
the subject of this paper, this
underlying geometry is that of Grassmannians; for higher rank groups,
it is the geometry of Nakajima quiver varieties (see
\cite{Webcatq,Webqui}).  More specifically, the algebra $T^\ell$ is
isomorphic to a convolution algebra defined using the Grassmannian and
certain related varieties.  This geometry provides a motivation for
understanding these algebras, and a more systematic way of thinking
about their definition, as well as relating this work to more
traditional geometric representation theory.  In particular, it shows
that the algebras $T^\ell$ are Koszul dual to the generalized arc
algebras of Stroppel \cite{Str09} (Theorem \ref{th:arc-Koszul}); thus our construction of Khovanov
homology is matched by Koszul duality with that of Khovanov \cite{Kho02} and
Stroppel \cite{StDuke}.  While a number of related geometric results
have appeared in the literature (for example in \cite{WebwKLR}), this
precise connection seems not to have been written before.

Finally, in the last section, we will give a short account of how to
precisely match up the construction we have given with Bar-Natan's
construction of Khovanov homology using a quotient of the cobordism
category.  As shown by Chatav \cite{Chatav}, Bar-Natan's construction
\cite{BarN05} applied to cobordisms
between flat tangles (what is often called the {\it Temperley-Lieb
  2-category}) can be interpreted as a 2-category which acts on the
derived categories of modules over $T^\ell$ (for all
$\ell$). Combining these results, we arrive at our main theorem:
\begin{theorem}\label{Khovanov}
  The knot invariants defined in \cite{Webmerged} for the
  representation $\C^2$ of $\mathfrak{sl}_2$ agree with Khovanov
  homology, up to a reindexing of gradings: Bar-Natan's internal
  grading agrees with ours, but his homological grading is the sum of
  our internal and homological grading.
\end{theorem}
We can also interpret the categorified Jones-Wenzl projector of Cooper
and Krushkal \cite{CoKr} as projection onto a natural subcategory in
our picture.  
\begin{theorem}
  The knot invariants defined in \cite{Webmerged} for the higher
  dimensional
  representations of $\mathfrak{sl}_2$ agree with those of \cite{CoKr}
  based on the categorified Jones-Wenzl projector.
\end{theorem}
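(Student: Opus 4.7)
The plan is to deduce this result from Theorem \ref{Khovanov} combined with an explicit identification of the Cooper-Krushkal categorified Jones-Wenzl projector inside the derived category of $T^\ell$-modules. The strategy rests on the fact that any Reshetikhin-Turaev invariant for a representation $V_n$ of $\mathfrak{sl}_2$ can be computed by cabling: replace each component labelled by $V_n$ with $n$ parallel strands labelled by $\C^2$ and insert the classical Jones-Wenzl projector onto the summand $V_n \subset (\C^2)^{\otimes n}$. Both the Cooper-Krushkal invariants and the Webster invariants of \cite{Webmerged} are, by construction, categorifications of this cabling-plus-projection recipe; the task is thus to identify the two categorified projectors.

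First, I would unravel the Webster construction for $V_n$-labelled components in terms of the algebras $T^\ell$ of Section \ref{sec:tens-prod-algebr}. The relevant summand is cut out by a distinguished idempotent in $T^\ell$ picking out the $V_n$-isotypic piece of $(\C^2)^{\otimes n}$; let $P_n$ denote the resulting idempotent endofunctor of $D(T^\ell\mmod)$, applied at each cabled component. Because of how $T^\ell$ is defined as a convolution algebra on Grassmannians, $P_n$ can be described very concretely, and its image is precisely the subcategory categorifying $V_n$ sitting inside the tensor product categorification.

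Second, transfer $P_n$ across the Chatav equivalence between Bar-Natan's Temperley-Lieb $2$-category and its realization acting on $D(T^\ell\mmod)$. Cooper and Krushkal characterize their projector by a universal property: it is an idempotent in (a suitable completion of) the Bar-Natan category that annihilates every turn-back cobordism on $n$ strands and restricts to the identity on the identity through-strand tangle. I would verify both properties directly for $P_n$ using the action of cups and caps through the induction/restriction $2$-functors categorifying the $\mathfrak{sl}_2$-action: a turn-back factors through a tensor product with fewer than $n$ strands, which is killed after projection onto the top isotypic component $V_n$. Uniqueness of such a projector, established by Cooper-Krushkal, then yields a canonical isomorphism between $P_n$ and their projector.

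Third, having matched the two projectors, the invariants themselves agree. By Theorem \ref{Khovanov}, the Khovanov homology of the $n$-cable coincides with the Webster invariant for the $\C^2$-coloured $n$-cable; the Cooper-Krushkal invariant is obtained by applying their projector to this cable, while the Webster invariant for $V_n$ is obtained by applying $P_n$, and the identification from the previous step forces the outputs to coincide. The main obstacle I expect is the second step, specifically verifying the turn-back annihilation property within the correct category of ind-complexes where the Cooper-Krushkal projector naturally lives; making the passage between $T^\ell\mmod$ and the ind-completion of the Bar-Natan category precise enough to invoke their uniqueness statement is where the technical work will concentrate.
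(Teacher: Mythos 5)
Your overall strategy lines up with the paper's: identify the image $\gamma(\mathsf{P}_\ell)$ of the Cooper--Krushkal projector with a projection functor on $D^-(T^\ell\mmod)$, then deduce the agreement of invariants from Theorem~\ref{Khovanov}. You invoke Cooper--Krushkal's uniqueness characterization to pin down your candidate projector; the paper runs essentially the same argument in the other direction, characterizing the projection $\pi_\ell=T^\ell e_0 T^\ell\Lotimes-$ onto $\mathcal{S}_0$ by the two properties (identity on $\mathcal{S}_0$, kills $\mathcal{S}_0^\perp$) and verifying that $\gamma(\mathsf{P}_\ell)$ satisfies them. Your turn-back argument (cups factor through lower tensor powers, killed by the projection) matches Lemmata~\ref{perp-hw} and \ref{cup-simple}, which identify $\mathcal{S}_0^\perp$ as the triangulated closure of the cup images under categorification functors.

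The genuine gap is in your third step. You assert that ``the Webster invariant for $V_n$ is obtained by applying $P_n$'' to the cable, and then conclude. But this is precisely what needs proof: the colored theory in \cite{Webmerged} is not defined by cabling and inserting projectors. It is defined directly over the algebras $T^{\mathbf{n}}=e_{\mathbf{n}}T^\ell e_{\mathbf{n}}$, using its own cup, cap and braiding bimodules $\mathscr{K}^{\mathbf{n}}(\tau)$. Matching that construction to the cable-then-project recipe requires showing that these colored bimodules agree with the idempotent truncations $e_{\mathbf{n}'}\mathscr{K}(\tau')e_{\mathbf{n}}$ of the uncolored cabled bimodules; this is Lemma~\ref{bimodules}, and its proof (especially for the cup, which requires identifying the image of $n$ nested cups under $e_{n,n}$ with the invariant simple for $T^{(n,n)}$) is nontrivial. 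You also need the multi-projector statement $\gamma(\mathsf{P}_{n_1}\otimes\cdots\otimes\mathsf{P}_{n_m})\cong T^\ell e_{\mathbf{n}}T^\ell\Lotimes-$, since a tangle slice generally meets several differently labelled strands, and then the factorization $T^\ell e_{\mathbf{n}}T^\ell\Lotimes-\cong T^\ell e_{\mathbf{n}}\Lotimes_{T^{\mathbf{n}}}e_{\mathbf{n}}T^\ell\Lotimes-$ to distribute the projector halves into adjacent slices. Without these, the comparison between the two colored theories does not close.
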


\section{Tensor product algebras of $\mathfrak{sl}_2$}
\label{sec:tens-prod-algebr}

{\small \it 
\begin{quotation}
I see but one rule: to be clear. If I am not clear, all my world crumbles to nothing.

\hfill --Stendhal (1840)
\end{quotation}
}

\subsection{Stendhal diagrams}
\label{sec:stendhal-diagrams}

We wish to define the algebra $T^\ell$ as discussed in the introduction.
\begin{definition}
  A {\bf Stendhal diagram} is an arbitrary finite number of smooth red and
  black curves in $\R\times [0,1]$ subject to the rules:
  \begin{itemize}
\item The endpoints of the curves must lie at distinct points of $\R\times \{0,1\}$.
  \item These curves must be oriented downward at each point. In
    particular, they have no local minima or maxima.
  \item Black curves can intersect other black curves and red curves,
    but pairs of red curves are not allowed to intersect.
  \item This collection of curves has no tangencies or triple (or higher)
    intersection points.
  \end{itemize}
  Each black strand can additionally carry dots that don't occur at
  crossing points; we'll represent a group of a number of dots as a
  single dot with that number next to it.  We'll consider these configurations
  up to isotopy that doesn't change any of these conditions (including
  isotopy of dots avoiding crossings).
\end{definition}

  Here are two examples of Stendhal diagrams:
  \begin{equation}
a=
\begin{tikzpicture}[baseline,very thick]
  \draw (-.5,-1) to[out=90,in=-90] (-1,0) to[out=90,in=-90](.5,1);
  \draw (.5,-1) to[out=90,in=-90] (1,1);
  \draw  (1,-1) to[out=90,in=-90] 
  node[midway,circle,fill=black,inner sep=2pt]{} (0,1);
  \draw[wei] (-1, -1) to[out=90,in=-90] (-.5,0) to[out=90,in=-90] (-1,1);
  \draw[wei] (0,-1) to[out=90,in=-90](-.5,1);
\end{tikzpicture}\qquad \qquad 
b=
\begin{tikzpicture}[baseline,very thick]
  \draw (-.5,-1) to[out=90,in=-90] (-1,0) to[out=90,in=-90](1,1);
  \draw (.5,-1) to[out=90,in=-90] (.5,1);
  \draw  (1,-1) to[out=90,in=-90] (-.5 ,1);
  \draw[wei] (0,-1) to[out=90,in=-90] (0,1);
  \draw[wei] (-1, -1) to[out=90,in=-90] (-.5,0) to[out=90,in=-90] (-1,1);
\end{tikzpicture}\label{eq:examples}
\end{equation}

Stendhal diagrams have a product structure given by letting $ab$ be
given by stacking $a$ on top\footnote{Thus, we read diagrams from
  bottom to top; we will usually read diagrams left to right.} of $b$, and attempting to attach strands
while preserving colors.  Since we only consider these diagrams up to
isotopy, only the order of red and black strands is relevant.  If this is not possible, then we simply say
that the composition is $0$.  For example:
\[
ab=
\begin{tikzpicture}[baseline,very thick,yscale=.5]
  \draw (-.5,-2) to[out=90,in=-90] (-1,-.8)
  to[out=90,in=-90](1,.2) to[out=90,in=-90] node[midway,circle,fill=black,inner sep=2pt]{}  (0,2);
  \draw (.5,-2) to[out=90,in=-90] (.5,0) to[out=90,in=-90] (1,2);
  \draw  (1,-2) to[out=90,in=-90] (-1,.8) to[out=90,in=-90] (.5,2);
  \draw[wei] (0,-2) to[out=90,in=-90]
  (0,0) to[out=90,in=-90]
  (-.5,2);
  \draw[wei] (-1, -2) to[out=90,in=-90] 
  (-.5,-1) to[out=90,in=-90] (-1,0) to[out=90,in=-90] (-.5,1)
  to[out=90,in=-90]   (-1,2);
\end{tikzpicture}\qquad \qquad ba=0
\]

A more explicit way of encoding the pattern of red and black strands
in a slice, if we have $\ell$ red strands and
$k$ black strands, is to define a map $\kappa\colon [1,\ell]\to
[0,k]$ attached to any generic
horizontal slice of a Stendhal diagram (i.e. one which avoids all
intersection points) sending $h$ to the number of
black strands left of the $h$th red strand (counted from left).  We
must have that the function attached to the top of $b$ ($y=1$)
coincides with that attached
to the bottom of $a$ ($y=0$), or the product is 0.

\begin{definition}\label{def:degree}
  The {\bf degree} of a Stendhal diagram is an integer assigned to
  each diagram, given by the sum of the number of red/black crossings
  plus twice the number of dots, minus twice the number of black/black
  crossings.  Note that this number is additive under composition.
\end{definition}

Fix a field $\K$ and an integer $n$.
\begin{definition}
  Let $T^\ell_n$ be the graded algebra spanned over $\K$ by Stendhal
  diagrams with
  $\ell$ red strands and $k=(\ell-n)/2$ black strands\footnote{
By convention, if $k$ is not a non-negative integer, then
$T^\ell_n=\{0\}$.}, graded as in Definition \ref{def:degree}, modulo
the homogeneous local relations:
\newseq
\begin{equation*}\subeqn \label{first-nilhecke}
    \begin{tikzpicture}[scale=.9,baseline]
      \draw[very thick](-4,0) +(-1,-1) -- +(1,1); \draw[very thick](-4,0) +(1,-1) -- +(-1,1); \fill (-4.5,.5) circle (3pt);
      \node at (-2,0){=}; \draw[very thick](0,0) +(-1,-1) -- +(1,1); \draw[very thick](0,0) +(1,-1) --
      +(-1,1); \fill (.5,-.5) circle (3pt);
      \node at (2,0){$-$}; \draw[very thick](4,0) +(-1,-1) -- +(-1,1)
    ; \draw[very thick](4,0) +(1,-1) --
      +(1,1);
    \end{tikzpicture}
  \end{equation*}
 \begin{equation*}\subeqn
    \begin{tikzpicture}[scale=.9,baseline]
      \draw[very thick](-4,0) +(-1,-1) -- +(1,1); \draw[very thick](-4,0) +(1,-1) -- +(-1,1); \fill (-4.5,-.5) circle (3pt);
      \node at (-2,0){=}; \draw[very thick](0,0) +(-1,-1) -- +(1,1)
    ; \draw[very thick](0,0) +(1,-1) --
      +(-1,1); \fill (.5,.5) circle (3pt);
      \node at (2,0){$-$}; \draw[very thick](4,0) +(-1,-1) -- +(-1,1)
     ; \draw[very thick](4,0) +(1,-1) --
      +(1,1);
    \end{tikzpicture}
  \end{equation*}
  \begin{equation*}\subeqn\label{black-bigon}
    \begin{tikzpicture}[very thick,scale=.9,baseline]
      \draw (-2.8,-1) .. controls (-1.2,0) ..  (-2.8,1)
     ; \draw (-1.2,-1) .. controls (-2.8,0) ..  (-1.2,1); \node at (-.5,0)
      {=}; \node at (0.4,0) {$0$};
    \end{tikzpicture}
\qquad \qquad 
    \begin{tikzpicture}[very thick,scale=.9,baseline]
      \draw (-3,0) +(1,-1) -- +(-1,1); \draw
      (-3,0) +(-1,-1) -- +(1,1); \draw
      (-3,-1) .. controls (-4,0) ..  (-3,1); \node at (-1,0) {=}; \draw (1,0) +(1,-1) -- +(-1,1)
    ; \draw (1,0) +(-1,-1) -- +(1,1)
     ; \draw (1,-1) .. controls
      (2,0) ..  (1,1); 
    \end{tikzpicture}
  \end{equation*}
\begin{equation*}\subeqn
    \begin{tikzpicture}[very thick,baseline]\label{red-triple-correction}
      \draw (-3,0)  +(1,-1) -- +(-1,1);
      \draw (-3,0) +(-1,-1) -- +(1,1);
     \draw[wei]
      (-3,-1) .. controls (-4,0) ..  (-3,1);
      \node at (-1,0) {=};
      \draw (1,0)  +(1,-1) -- +(-1,1);
      \draw (1,0) +(-1,-1) -- +(1,1);
      \draw[wei] (1,-1) .. controls
      (2,0) ..  (1,1);   
\node at (2.6,0) {$- $};
      \draw (4.5,0)  +(1,-1) -- +(1,1);
      \draw (4.5,0) +(-1,-1) -- +(-1,1);
      \draw[wei] (4.5,0) +(0,-1) --  +(0,1);
 \end{tikzpicture}
  \end{equation*}
\begin{equation*}\subeqn\label{dumb}
    \begin{tikzpicture}[very thick,baseline=2.85cm]
      \draw[wei] (-3,3)  +(1,-1) -- +(-1,1);
      \draw (-3,2) .. controls (-4,3) ..  (-3,4);
      \draw (-3,3) +(-1,-1) -- +(1,1);
      \node at (-1,3) {=};
      \draw[wei] (1,3)  +(1,-1) -- +(-1,1);
 \draw (1,2) .. controls (2,3) ..  (1,4);
      \draw (1,3) +(-1,-1) -- +(1,1);    \end{tikzpicture}
  \end{equation*}
\begin{equation*}\subeqn\label{red-dot}
    \begin{tikzpicture}[very thick,baseline]
  \draw(-3,0) +(-1,-1) -- +(1,1);
  \draw[wei](-3,0) +(1,-1) -- +(-1,1);
\fill (-3.5,-.5) circle (3pt);
\node at (-1,0) {=};
 \draw(1,0) +(-1,-1) -- +(1,1);
  \draw[wei](1,0) +(1,-1) -- +(-1,1);
\fill (1.5,.5) circle (3pt);
    \end{tikzpicture}
  \end{equation*}
  \begin{equation*}\subeqn\label{cost}
  \begin{tikzpicture}[very thick,baseline=1.6cm]
    \draw (-2.8,-1) .. controls (-1.2,0) ..  (-2.8,1);
       \draw[wei] (-1.2,-1) .. controls (-2.8,0) ..  (-1.2,1);
           \node at (-.3,0) {=};
    \draw[wei] (2.8,0)  +(0,-1) -- +(0,1);
       \draw (1.2,0)  +(0,-1) -- +(0,1);
       \fill (1.2,0) circle (3pt);
          \draw[wei] (-2.8,2) .. controls (-1.2,3) ..  (-2.8,4);
  \draw (-1.2,2) .. controls (-2.8,3) ..  (-1.2,4);
           \node at (-.3,3) {=};
    \draw (2.8,3)  +(0,-1) -- +(0,1);
       \draw[wei] (1.2,3)  +(0,-1) -- +(0,1);
       \fill (2.8,3) circle (3pt);
  \end{tikzpicture}
\end{equation*}
\begin{equation*}
  \subeqn\label{violating} 
 \begin{tikzpicture}[very thick,baseline]
    \draw (-2.8,-1)--(-2.8,1);
    \node at (-1.8,0) {$\cdots$};
           \node at (-.3,0) {=};
    \node at (1.2,0) {$0$};
  \end{tikzpicture}
\end{equation*}
This last equation perhaps requires a little explanation. It should be
interpreted as saying that we set to 0 any Stendhal diagram with a
generic slice
$y=a$ where the leftmost strand is black, that is, where
$\kappa(1)>0$.

We let $T^\ell\cong \oplus_n T^\ell_n$.  If we consider the same span
of diagrams modulo only the relations
(\ref{first-nilhecke}--\ref{cost}), omitting (\ref{violating}), then
we denote the corresponding algebras $\tilde{T}^\ell_n$ and $\tilde{T}^\ell$.
\end{definition}
For example, the diagrams $a$ and $b$ defined in \eqref{eq:examples}
are both 0 in $T^\ell$, by the relation (\ref{violating}); in
$\tilde{T}^\ell$, they are not 0, but can be simplified to:
\[a=
\begin{tikzpicture}[baseline,very thick]
  \draw (-.5,-1) to[out=90,in=-90]  node[pos=.8,circle,fill=black,inner sep=2pt]{}  (-.5,0)  to[out=90,in=-90](.5,1);
  \draw (.5,-1) to[out=90,in=-90] (1,1);
  \draw  (1,-1) to[out=90,in=-90] 
  node[midway,circle,fill=black,inner sep=2pt]{} (0,1);
  \draw[wei] (-1, -1) to[out=90,in=-90] (-1,0) to[out=90,in=-90] (-1,1);
  \draw[wei] (0,-1) to[out=90,in=-90](-.5,1);
\end{tikzpicture}\qquad \qquad 
b=
\begin{tikzpicture}[baseline,very thick]
  \draw (-.5,-1) to[out=90,in=-90] node[pos=.8,circle,fill=black,inner sep=2pt]{} (-.5,0) to[out=90,in=-90](1,1);
  \draw (.5,-1) to[out=90,in=-90] (.5,1);
  \draw  (1,-1) to[out=90,in=-90] (-.5 ,1);
  \draw[wei] (0,-1) to[out=90,in=-90] (0,1);
  \draw[wei] (-1, -1) to[out=90,in=-90] (-1,0) to[out=90,in=-90] (-1,1);
\end{tikzpicture}
\]

This algebra ultimately corresponds to the $n$ weight space of the
$\mathfrak{sl}_2$-representation $(\C^2)^{\otimes \ell}$.  The weights that
appear in this representation are $n=\ell,\ell-2,\dots, 2-\ell,-\ell$,
which correspond to $k=0,1,2,\dots,\ell$ black strands.   It is not
obvious, but can be seen from results below (such as Theorem
\ref{th:Gr-iso}) that if $k>\ell$, then the resulting algebra is 0.
Using the connection to Grassmannians we'll describe, this simply
corresponds to the fact that the Grassmannian of $k$
dimensional subspaces of $\C^\ell$ is empty if $k>\ell$.
In particular, $T^\ell$ is a finite dimensional unital algebra.

It will be convenient for us to name several elements of $T^\ell_n$,
which form a generating set:

\begin{equation}
    \tikz[baseline]{
      \node[label=below:{$y_{i;\kappa}$}] at (0,1.2){ 
        \tikz[very thick,xscale=1.4]{
           \draw (-.5,-.5)-- (-.5,.5);
          \draw (.5,-.5)-- (.5,.5) node [midway,fill=black,circle,inner
          sep=2pt]{} ;
          \draw (1.5,-.5)-- (1.5,.5);
          \node at (1,0){$\cdots$};
          \node at (0,0){$\cdots$};
        }
      };
      \node[label=below:{$\psi_{i;\kappa}$}] at (5,1.2){ 
        \tikz[very thick,xscale=1.4]{
          \draw (-.5,-.5)-- (-.5,.5);
          \draw (.1,-.5)-- (.9,.5);
          \draw (.9,-.5)-- (.1,.5);
          \draw (1.5,-.5)-- (1.5,.5);
          \node at (1.1,0){$\cdots$};
          \node at (-.1,0){$\cdots$};
        }
      };
      \node[label=below:{$\iota_{i;\kappa}^+$}] at (0,-1.2){ 
        \tikz[very thick,xscale=1.4]{
          \draw (-.5,-.5)-- (-.5,.5);
          \draw[wei] (.1,-.5)-- (.9,.5);
          \draw (.9,-.5)-- (.1,.5);
          \draw (1.5,-.5)-- (1.5,.5);
          \node at (1.1,0){$\cdots$};
          \node at (-.1,0){$\cdots$};
        }
      };
      \node[label=below:{$\iota_{i;\kappa}^-$}] at (5,-1.2){ 
        \tikz[very thick,xscale=1.4]{
          \draw (-.5,-.5)-- (-.5,.5);
          \draw (.1,-.5)-- (.9,.5);
          \draw[wei] (.9,-.5)-- (.1,.5);
          \draw (1.5,-.5)-- (1.5,.5);
          \node at (1.1,0){$\cdots$};
          \node at (-.1,0){$\cdots$};
        }
      };
    }\label{eq:diagrams}
  \end{equation}
\begin{itemize}
\item Let $e_{\kappa}$ be a diagram with no crossings or dots, and the
  horizontal slice at every value of $y$ corresponding to the function
  $\kappa$. This is an idempotent element of the algebra
  $T^\ell$. Since the function where $\kappa(i)=0$ for all $i\in [1,k]$ is
  especially important, we let $e_0$ denote the sum of the
  idempotents these zero functions for all $k$. 
\item Let $y_{i,\kappa}$ denote the degree $2$ diagram $e_\kappa$ with a single dot
  added on the $i$th strand.
\item Let $\psi_{i,\kappa}$ be the diagram that
  adds a single crossing of the $i$ and $i+1$st strands to $e_\kappa$; if they are
  separated by a red strand, the crossing should occur to the right of
  it. The degree of this element is $-2$ if there is no intervening
  red strand.
\item $\iota^+_{i,\kappa}$ denote the element which creates a single
  crossing between the $i$th black strand of $e_\kappa$ with a red
  strand to its left if this is
  possible without creating black crossings (i.e. if $i-1$ is in the image of $\kappa$). Similarly,
  $\iota^-_{i,\kappa}$ creates crossing with the red strand to the
  right, if this is possible (i.e. if $i$ is in the image of $\kappa$).  These diagrams have degree 1.
\end{itemize}
Note that the diagrams $e_{\kappa},y_{i,\kappa}, $ and $\psi_{i,\kappa}$
have the same sequences at top and bottom; only $\iota^\pm_{i,\kappa}$
change these sequences.  
For a fixed weakly increasing function $\kappa$, we let $\kappa_i^\pm$ be the function
$\kappa_i^\pm(p)=\kappa(p)\pm\delta_{p,p_i^\pm}$ with $p_i^+$ being the
largest integer such that $\kappa(p_i^+)=i-1$, and $p_i^-$ the
smallest integer such that $\kappa(p_i^-)=i$.  If $p_i^\pm$ is not
well-defined since $i-1$ or $i$ is not in the image, then
$\kappa_i^\pm$ is simply not defined.

There is a natural collection of left modules $ T^\ell e_\kappa$ over
the algebra $T^\ell$, given by the idempotents defined above.  These
are projective since they are summands of the left regular module.
In terms of pictures, elements of $P_\kappa=T^\ell e_\kappa $ are
diagrams where we have fixed the strands at the bottom to be the sequence
associated $\kappa$, and where we let the elements of $T^\ell$ act by
attaching them at the top.  

\subsection{A cellular basis}
\label{sec:natural-basis}

When faced with an unfamiliar algebra, one naturally looks for
comforting points of familiarity.  For the algebras we have
introduced, one of these is provided by a basis.   The basis vectors
are indexed by pairs of certain diagrams: each diagram is based on a
Young diagram which fits inside a $k\times (\ell-k)$ box.  We'll draw
partitions in the French style, with the shortest part at the top;
we'll also always give the partition $k$ parts, adding 0's as
necessary, and index these smallest first $\la=(\la_1\leq \la_2\leq
\cdots \leq \la_k)$.  

\begin{definition}
  A {\bf backdrop} for this partition is an association of a number
  between $1$ and $\ell$ at the end to each part of the partition (which we'll write in
  first box of the corresponding row in the Young diagram), even those whose corresponding parts are 0. In
  addition, if we use the same number twice, part of the data of a
  backdrop is to choose an order on the parts with the same number;
  we'll use the notation $i_1,\dots, i_p$ to denote the $p$ instances
  of $i$.  The number of the $j$th row from the top must be $\geq
  j+\la_j$.
\end{definition}
To a backdrop
$\sS$, we have an associated function $\kappa$, where $\kappa(p)$ is
the number of rows with label $<p$.  

Let $\sS$ be a backdrop on a Young diagram; we define an element $B_\sS$ of the
algebra $T^\ell$ as the diagram such that:
\begin{itemize}
\item The bottom of $B_\sS$ has a single black line to the right of the
  $(j+\la_j)$th red line corresponding to the $j$th row (the partition
  condition guarantees that there are no more than one black line
  between red lines; note that this is independent of the labels on
  rows).
\item The top of $B_\sS$ has the number of black strands between the $j$th and
  $(j+1)$st red strands given by the number of rows with label $j$;
  the order on rows with the same label allows us to match up rows
  with black strands at the top.
\item The top and bottom of $B_\sS$ both have black strands labeled by rows of
  the Young diagram; the diagram $B_\sS$ connects the black strands at the top
  and the bottom labeled by the same row.  This diagram isn't unique,
  but we choose one of them with a minimal number of crossings
  arbitrarily; due to the relations
  (\ref{black-bigon}--\ref{red-triple-correction}), any two such
  diagrams will differ by a sum of diagrams with fewer crossings.
\end{itemize}
Note, there are two natural choices of the diagram $B_{\sS}$: {\bf
  left-justified} and {\bf right-justified}.  To construct the
left-justified $B_{\sS}$, as we read from the bottom we add in the
needed crossings of each strand with the strands to its right starting
with the leftmost, and then proceeding to the right; for the
right-justified we start with the rightmost strand and proceed left.
For example, the partition with $(1,1,3,4)$ and $\ell=8,k=4$ with the labels
$(4_2,4_1,7_1,8_1)$ has the associated right-justified diagram $B_{\sS}$ given by 
\[\tikz[very thick]{  
\node (a) at (-3,.5){\tikz[very thick]{ 
\draw (0,-.8) -- (0,.8);
\draw (.4,-.8) -- (.4,.8);
\draw (.8,0) -- (.8,.8);
\draw (1.2,0) -- (1.2,.8);
\draw (1.6,.4) -- (1.6,.8);
\draw (0,-.8) -- (.4,-.8);
\draw (0,-.4) -- (.4,-.4);
\draw (0,0) -- (1.2,0);
\draw (0,.4) -- (1.6,.4);
\draw (0,.8) -- (1.6,.8);
\node[scale=.8] at (.2,-.6) {$4_2$};
\node[scale=.8] at (.2,-.2) {$4_1$};
\node[scale=.8] at (.2,.6) {$8_1$};
\node[scale=.8] at (.2,.2) {$7_1$};
}};
\draw[->] (a.0) -- (-.4,.5);
\draw[wei] (0,0) -- (0,1); \draw[wei] (.5,0) --
  (.5,1); \draw (1,0) to[out=70,in=-150] (2.5,1); \draw[wei] (1.5,0) --
  (1,1);\draw (2,0) -- (2,1); \draw[wei] (2.5,0) to[out=140,in=-80]
  (1.5,1); \draw[wei] (3,0) -- (3,1);\draw[wei] (3.5,0) -- (3.5,1);
  \draw (4,0) -- (4.5,1); \draw[wei] (4.5,0) -- (4,1); \draw[wei] (5,0) -- (5,1);
  \draw (5.5,0) -- (5.5,1); }\]
The left-justified diagram for the same backdrop is given by 
\[\tikz[very thick]{  
\draw[wei] (0,0) -- (0,1); \draw[wei] (.5,0) --
  (.5,1); \draw (1,0) to[out=20,in=-90] (2.5,1); \draw[wei] (1.5,0) --
  (1,1);\draw (2,0) -- (2,1); \draw[wei] (2.5,0) to[out=90,in=-30]
  (1.5,1); \draw[wei] (3,0) -- (3,1);\draw[wei] (3.5,0) -- (3.5,1);
  \draw (4,0) -- (4.5,1); \draw[wei] (4.5,0) -- (4,1); \draw[wei] (5,0) -- (5,1);
  \draw (5.5,0) -- (5.5,1); }\]

Let $B_{\sS}^*$ be the mirror image of the diagram $B_\sS$ through a
horizontal axis.  For two different backdrops $\sS$ and $\sT$ of the
same Young diagram, we have a vector
$C_{\sS,\sT}=B_{\sS}^{\mbox{}}B_{\sT}^*$.  If $\sS$ and $\sT$
are backdrops on different Young diagrams, this product is 0, since
the sequences don't match.  

\begin{theorem}[\mbox{\cite[5.17]{SWschur}}]
  The vectors $C_{\sS,\sT}$ where $\sS$ and $\sT$ range over all
  pairs of backdrops on Young diagrams in a $k\times (\ell-k)$ box form a basis of
  $T^\ell_{\ell-2k}$.  In fact, they are a graded cellular basis of this
  algebra in the sense of Graham and Lehrer \cite{GLcell} and Hu and
  Mathas \cite{HM}.
\end{theorem}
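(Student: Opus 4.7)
The plan is to verify the Graham-Lehrer axioms of a cellular basis directly. The poset $\Lambda$ is the set of partitions $\lambda$ fitting in a $k\times(\ell-k)$ box (where $k=(\ell-n)/2$), ordered so that diagrams indexed by strictly larger Young diagrams come later; for each $\lambda\in\Lambda$ the cell is indexed by backdrops on $\lambda$. The total count $\binom{\ell}{k}$ matches the dimension prediction from the decategorification discussion, so this is plausibly the right poset.

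First I would introduce the anti-involution $*\colon T^\ell_n\to T^\ell_n$ defined by horizontal reflection (and reversal of composition order). A direct inspection of the relations (\ref{first-nilhecke})--(\ref{violating}) shows each is stable under $*$, so $*$ is a well-defined algebra anti-involution. Since by construction $C_{\sS,\sT}=B_\sS B_\sT^*$, this immediately gives $(C_{\sS,\sT})^*=C_{\sT,\sS}$, one of the cellular axioms.

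Next I would establish spanning and linear independence. For spanning, given an arbitrary Stendhal diagram, I would run an induction on a complexity statistic (say total number of crossings plus dots): the nilHecke-type relations (\ref{first-nilhecke})--(\ref{black-bigon}) reduce black/black tangles; the relations (\ref{red-triple-correction}), (\ref{dumb}), (\ref{red-dot}) straighten black strands past pairs of red strands; and (\ref{cost}), (\ref{violating}) kill or correct any diagram in which a black strand bubbles off to the far left past a red strand. After enough such reductions, any diagram is written as a linear combination of $B_\sS B_\sT^*$'s plus error terms that lie on strictly larger partitions in $\Lambda$. For linear independence, I would appeal to the faithful representation of Lemma \ref{faithful-rep} on $\bigoplus_\kappa R/I_\kappa$; using the Schur-polynomial basis of each $R/I_\kappa$ (which realizes it as the cohomology of a Schubert cell), one computes $C_{\sS,\sT}\cdot 1 \in R/I_{\kappa(\sT)}$ and checks that the resulting images, indexed over all pairs $(\sS,\sT)$, are linearly independent.

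The last step is the multiplicative cellularity axiom: for any $a\in T^\ell_n$ one needs
\[
a\,C_{\sS,\sT}\equiv \sum_{\sS'} r_a(\sS',\sS)\,C_{\sS',\sT} \pmod{T^{>\lambda}},
\]
with coefficients $r_a(\sS',\sS)$ depending only on $a$ and $\sS$, and $T^{>\lambda}$ the span of $C_{\sS'',\sT''}$ on strictly larger Young diagrams. The main obstacle is the bookkeeping in this step: one has to check, generator by generator ($y_{i,\kappa}$, $\psi_{i,\kappa}$, $\iota^\pm_{i,\kappa}$), that left multiplication on $B_\sS$ either permutes and relabels backdrops on the same $\lambda$, or, after applying one of the simplification relations above, produces terms whose underlying diagrams are supported on strictly larger partitions. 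An alternative that I would explore to organize this analysis is to phrase everything in terms of the filtration by standard modules $S_\kappa$ from the preceding subsection, since the order on partitions is already encoded in the definition of $S_\kappa$. Combining the verified axiom with the anti-involution then packages into the full cellular structure.
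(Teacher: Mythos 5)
The paper does not prove this theorem; it is cited directly from \cite[5.15]{SWschur}, so there is no internal argument to compare against. Your strategy of verifying the Graham--Lehrer axioms one by one is the right one, and the anti-involution $*$ and the multiplicativity bookkeeping are correctly identified (even if the latter remains only a sketch). The genuine gap is in the linear-independence step. Evaluating each $C_{\sS,\sT}$ on the single element $1 \in R/I_{\kappa(\sT)}$ yields just one vector of $R/I_{\kappa(\sS)}$ per basis element; two distinct elements of $e_{\kappa(\sS)} T^\ell_n e_{\kappa(\sT)}$ can agree at $1$ and differ on other polynomials, so independence of the images $C_{\sS,\sT}\cdot 1$ is neither sufficient nor, in general, true. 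To close this you must either compare the full operators $R/I_{\kappa(\sT)} \to R/I_{\kappa(\sS)}$ induced by $C_{\sS,\sT}$---for instance by exhibiting triangularity of their matrix coefficients in the Schur-polynomial basis with respect to a suitable order on backdrops---or obtain an independent lower bound on $\dim T^\ell_{\ell-2k}$ equal to the number of backdrop pairs. Note that the $T$-fixed-point count in the proof of Theorem~\ref{main} runs in the wrong direction for this (it bounds $\dim A^\ell_n$ from above, hence $\dim T^\ell_n$ from above given injectivity, which is the same side your spanning argument already controls). Without a genuine lower bound or a full operator-level triangularity, spanning alone shows only that your proposed set contains a basis, not that it is one.
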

\begin{remark}
  Connecting the combinatorics of \cite[5.17]{SWschur} and of
  backdrops requires some translation.  When applying
  \cite[5.17]{SWschur} to the $\mathfrak{sl}_2$ situtation, we wish to
  consider charged $\ell$-multipartitions which only contain boxes of content 0 in
  their diagram; this is only possible if every component partition is
  a single box or empty, all having charge 0.  Thus, the only
  information is which components in the multipartition are empty, and
  which are non-empty.  The partition $\la$ we consider has one part
  for each component which is a single box, and its length is the
  number of proceeding components which are empty.  

A tableau on such a multipartition is a filling of the boxes; the
numbers we use the backdrop correspond to which alphabet the filling
comes from, and our order corresponds to the order in that alphabet
(that is, our $j_p$ corresponds to $p_j$ in \cite{SWschur}).  Thus,
$(1,1,3,4)$ in our notation corresponds to $(\emptyset, (1),(1),
\emptyset,\emptyset,(1),\emptyset, (1))$, with the tableau having the
entries $2_4,1_4,1_7,1_8$ in that order.  
\end{remark}

A cellular basis of an algebra, amongst other things, supplies a
natural class of modules, the {\bf cell modules} $S_\la$.
\begin{definition}
  The cell module $S_\la$ for a partition $\la$ has a basis given by symbols
  $\{b_{\bS}\}$ for the different backdrops $\mathsf{S}$ on $\lambda$.
  By definition $ C_{\sS',\sT'}b_{\sS}=a_{\sS',\sT',\sS}b_{\sS'}$
  where $a_{\sS',\sT',\sS}$ is the coefficient of $B_{\sS'}$ in the
  basis expansion of $C_{\sS',\sT'}B_{\sS}$.
\end{definition}
For example, if $\ell=5$ and $\la=(1,2)$, then the possible backdrops
are given by choosing $a\in [2,5],b\in [4,5]$, and choosing an order
if $a=b$.  One can easily calculate that there are 10 possibilities:
\[(a,b)\in \{ (2,4),(2,5),(3,4), (3,5), (4_1,4_2),(4_2,4_1),(4,5),(5,4),
(5_1,5_2), (5_2,5_1)\}\] with the associated (right-justified) basis vectors
\begin{multline}
\tikz[baseline,xscale=.45,very thick]{\draw[wei] (0,-.25)--(0,.25);
\draw[wei] (1,-.25)--(1,.25);
\draw[wei] (2,-.25)--(2,.25);
\draw[wei] (3,-.25)--(3,.25);
\draw[wei] (4,-.25)--(4,.25);
\draw (1.5,-.25)--(1.5,.25);
\draw (3.5,-.25)--(3.5,.25);
}\qquad \tikz[baseline,xscale=.45,very thick]{\draw[wei] (0,-.25)--(0,.25);
\draw[wei] (1,-.25)--(1,.25);
\draw[wei] (2,-.25)--(2,.25);
\draw[wei] (3,-.25)--(3,.25);
\draw[wei] (4,-.25)--(4,.25);
\draw (1.5,-.25)--(1.5,.25);
\draw (3.5,-.25)--(4.5,.25);
}\qquad 
\tikz[baseline,xscale=.45,very thick]{\draw[wei] (0,-.25)--(0,.25);
\draw[wei] (1,-.25)--(1,.25);
\draw[wei] (2,-.25)--(2,.25);
\draw[wei] (3,-.25)--(3,.25);
\draw[wei] (4,-.25)--(4,.25);
\draw (1.5,-.25)--(2.5,.25);
\draw (3.5,-.25)--(3.5,.25);
}\qquad 
\tikz[baseline,xscale=.45,very thick]{\draw[wei] (0,-.25)--(0,.25);
\draw[wei] (1,-.25)--(1,.25);
\draw[wei] (2,-.25)--(2,.25);
\draw[wei] (3,-.25)--(3,.25);
\draw[wei] (4,-.25)--(4,.25);
\draw (1.5,-.25)--(2.5,.25);
\draw (3.5,-.25)--(4.5,.25);
}\qquad \tikz[baseline,xscale=.45,very thick]{\draw[wei] (0,-.25)--(0,.25);
\draw[wei] (1,-.25)--(1,.25);
\draw[wei] (2,-.25)--(2,.25);
\draw[wei] (3,-.25)--(3,.25);
\draw[wei] (4,-.25)--(4,.25);
\draw (1.5,-.25) to[out=50,in=-130] (3.25,.25);
\draw (3.5,-.25) -- (3.75,.25);
}\\
\tikz[baseline,xscale=.45,very thick]{\draw[wei] (0,-.25)--(0,.25);
\draw[wei] (1,-.25)--(1,.25);
\draw[wei] (2,-.25)--(2,.25);
\draw[wei] (3,-.25)--(3,.25);
\draw[wei] (4,-.25)--(4,.25);
\draw (1.5,-.25) to[out=50,in=-130] (3.75,.25);
\draw (3.5,-.25) -- (3.25,.25);
}\qquad \tikz[baseline,xscale=.45,very thick]{\draw[wei] (0,-.25)--(0,.25);
\draw[wei] (1,-.25)--(1,.25);
\draw[wei] (2,-.25)--(2,.25);
\draw[wei] (3,-.25)--(3,.25);
\draw[wei] (4,-.25)--(4,.25);
\draw (1.5,-.25) to[out=50,in=-130] (3.5,.25);
\draw (3.5,-.25) -- (4.5,.25);
}\qquad 
\tikz[baseline,xscale=.45,very thick]{\draw[wei] (0,-.25)--(0,.25);
\draw[wei] (1,-.25)--(1,.25);
\draw[wei] (2,-.25)--(2,.25);
\draw[wei] (3,-.25)--(3,.25);
\draw[wei] (4,-.25)--(4,.25);
\draw (1.5,-.25) to[out=30,in=-150] (4.5,.25);
\draw (3.5,-.25) -- (3.5,.25);
}\qquad 
\tikz[baseline,xscale=.45,very thick]{\draw[wei] (0,-.25)--(0,.25);
\draw[wei] (1,-.25)--(1,.25);
\draw[wei] (2,-.25)--(2,.25);
\draw[wei] (3,-.25)--(3,.25);
\draw[wei] (4,-.25)--(4,.25);
\draw (1.5,-.25) to[out=30,in=-150] (4.3,.25);
\draw (3.5,-.25) to[out=40,in=-140] (4.75,.25);
}\qquad \tikz[baseline,xscale=.45,very thick]{\draw[wei] (0,-.25)--(0,.25);
\draw[wei] (1,-.25)--(1,.25);
\draw[wei] (2,-.25)--(2,.25);
\draw[wei] (3,-.25)--(3,.25);
\draw[wei] (4,-.25)--(4,.25);
\draw (1.5,-.25) to[out=30,in=-165] (4.75,.25);
\draw (3.5,-.25) to[out=30,in=-90] (4.3,.25);
}\label{eq:basis}
\end{multline}

We act on these by the usual stacking, applying relations to rewrite our
diagram in the cellular basis, and then discarding all terms involving
basis vectors not on the list above.

If we choose the tautological backdrop $\sT$ where the $j$th row is labeled
with $\la_j+j$, then $C_{\sT,\sT}=e_{\kappa(\la)}$ (as we see in
\eqref{eq:basis} when $(a,b)=(2,4)$).  Since
$e_{\kappa(\la)}^2=e_{\kappa(\la)}$, there's no cellular chain where
$J_i^2\subset J_{i-1}$ for any $i$.  
The theory of cellular bases (in particular, \cite[2.1]{KX}) provides a number of useful corollaries:
\begin{corollary}\label{cellular consequences}\mbox{}
  \begin{enumerate}
\item Every module $S_\la$ has a unique simple quotient $L_\la$, and
  these give a complete, irredundant list of simple modules over
  $T^\ell$.  
  \item The cell modules $S_\la$ are the standard modules of a
    quasi-hereditary structure on the algebra $T^\ell$.  In
    particular, the classes $[S_\la]$ give a free basis for the
    Grothendieck group of finite dimensional $T^\ell$ modules.
\item We write
$\kappa\geq \kappa'$ if this inequality holds pointwise. The module $S_{\la}$ is the quotient of $P_{\kappa(\la)}$ by the
  submodule spanned by all diagrams with a slice that corresponds to
  $\kappa'> \kappa(\la)$.  This is the same the quotient by the submodule spanned by the
  image of every homomorphism $P_\kappa'\to P_{\kappa(\la)}$ for $\kappa'>
  \kappa$.  If there is no $\la$ such that $\kappa= \kappa(\la)$, then the
  corresponding quotient is 0.  
  \end{enumerate}
\end{corollary}

\subsection{An example}
\label{sec:an-example}

The first interesting example is when $\ell=2$ and $k=1$; this
corresponds to the weight $0$ subspace of $\C^2\otimes \C^2$.  

The algebra $T^2_0$ is 5 dimensional:
there are 2 Young diagrams that fit in a $1\times 1$ box,
  corresponding to the partitions $(\emptyset)$ and $(1)$.
  Using the label $1$ or $2$ for $(\emptyset)$ is an acceptable
  backdrop (we call these backdrops $\sT_1,\sT_2$), and for  $(1)$, only
  $2$ is an acceptable label (we call this backdrop $\sS$). 

Thus, we have 5 basis vectors, $C_{\sS,\sS},C_{\sT_2,\sT_1},C_{\sT_2,\sT_2},
C_{\sT_1,\sT_1},C_{\sT_1,\sT_2}$ which
are given by the diagrams:
\[ \tikz{ 
\node at (-5,0){ \tikz[xscale=.8,yscale=.6]{\node at (-2,.5){\tikz{\draw(0,.3) -- (0,0) -- (.3,0) --(.3,.3)--cycle;\draw(.7,.3) --
    (.7,0) -- (1,0)--(1,.3)--cycle; \node[scale=.7] at (.16,.16)
    {$2$};\node[scale=.7] at (.86,.16) {$2$};}}; \draw[wei,very thick] (0,0)--(0,1);
    \draw[wei,very thick] (.5,0) --(.5,1);\draw[very thick] (1,0)--(1,1); 
}};
\node at (0,0){ \tikz[xscale=.8,yscale=.6]{
\node at (-2,-1){\tikz{\draw (.3,.3) -- (0,.3) -- (0,0);\draw  (1,.3) --(.7,.3) --
    (.7,0);\node[scale=.7] at (.16,.14)
    {$2$};\node[scale=.7] at (.86,.14) {$1$};}};
\node at (-2,-2.5){\tikz{\draw (.3,.3) -- (0,.3) -- (0,0);\draw  (1,.3) --(.7,.3) --
    (.7,0); \node[scale=.7] at (.16,.14) {$2$};\node[scale=.7] at (.86,.14) {$2$};}};
\draw[wei,very thick] (0,-1.5)--(0,-.5);
    \draw[wei,very thick] (1,-1.5) --(.5,-.5);\draw[very thick] (.5,-1.5)--(1,-.5);
\draw[wei,very thick] (0,-3)--(0,-2);
    \draw[wei,very thick] (.5,-3) to[in=-90,out=90] (1,-2.5) to[in=-90,out=90] (.5,-2);\draw[very thick] (1,-3)
    to[in=-90,out=90] (.5,-2.5) to[in=-90,out=90] (1,-2); }};
\node at (5,0){ \tikz[xscale=.8,yscale=.6]{
\node at (-2,.5){\tikz{\draw (.3,.3) -- (0,.3) -- (0,0);\draw  (1,.3) --(.7,.3) --
    (.7,0); \node[scale=.7] at (.16,.14) {$1$};\node[scale=.7] at (.86,.14) {$1$};}};
\draw[wei,very thick] (0,0)--(0,1);
    \draw[very thick] (.5,0) --(.5,1);\draw[wei,very thick] (1,0)--(1,1); 
\draw[wei,very thick] (0,-1.5)--(0,-.5);
    \draw[wei,very thick]
    (.5,-1.5)--(1,-.5);\draw[very thick] (1,-1.5) --(.5,-.5);
\node at (-2,-1){\tikz{\draw (.3,.3) -- (0,.3) -- (0,0);\draw  (1,.3) --(.7,.3) --
    (.7,0); \node[scale=.7] at (.16,.14) {$1$};\node[scale=.7] at (.86,.14) {$2$};}};
\draw[wei,very thick] (0,0)--(0,1);
}};} \]

In the representation of this algebra defined by Lemma
\ref{faithful-rep}, we let $I_{(a,b)}=I_\kappa$ where $\kappa(1)=1,\kappa(2)=b$.
We have $S=\K[y]$ and $I_{(0,0)}=(y^2), I_{(0,1)}=(y), I_{(1,1)}=S$, so
the space on which they act is $\K[y]/(y^2)\oplus \K$; the algebra
$T^2_0$ is precisely the endomorphisms of this module as a module over
$\K[y]/(y^2)$.    This is that same as Soergel's description
of the principal block of category $\cO$ for $\mathfrak{sl}_2$ using
the Endomorphismensatz \cite{Soe90} as discussed in \cite[\S 5.1.1]{Str03}. 

As noted in \cite[\S 5.1.1]{Str03}, we can also give a description of this algebra as
a quotient of the path algebra of the quiver of a length 2 cycle
\[\tikz[baseline=-3pt]{\node[label=left:$a$] (a) at (0,0) {$\bullet$}; \node[label=right:$b$] (b) at (2,0) {$\bullet$};
  \draw[->] (a.20) -- node[above,midway]{$\psi$} (b.160);\draw[<-] (a.-20) -- node[below,midway]{$\phi$} (b.-160);}\] with $C_{\sS,\sS},
C_{\sT_1,\sT_1}$ giving the length 0 paths at $a$ and $b$, $\psi=C_{\sT_1,\sT_2},\phi=C_{\sT_2,\sT_1}$
giving the length 1 paths, and the single relation $\psi\phi=0$, which follows from 
 \begin{equation*}
  \begin{tikzpicture}[very thick,baseline]
        \draw[wei] (-3.5,-1) -- (-3.5,1);
 \draw (-2.5,-1) .. controls (-1.5,0) ..  (-2.5,1);
       \draw[wei] (-1.5,-1) .. controls (-2.5,0) ..  (-1.5,1);
 \end{tikzpicture}\hspace{5mm}\overset{(\ref{cost})}=\hspace{5mm}\begin{tikzpicture}[very thick,baseline]
     \draw[wei] (.5,-1) -- (.5,1);
    \draw[wei] (2.5,0)  +(0,-1) -- +(0,1);
       \draw (1.5,0)  +(0,-1) -- +(0,1);
       \fill (1.5,0) circle (3pt);
  \end{tikzpicture}\hspace{5mm}\overset{(\ref{cost})}=\hspace{5mm}\begin{tikzpicture}[very thick,baseline]
          \draw[wei] (-2.5,-1) .. controls (-1.5,0) ..  (-2.5,1);
  \draw (-1.5,-1) .. controls (-2.5,0) ..  (-1.5,1);
   \draw[wei] (-.5,-1) -- (-.5,1);
  \end{tikzpicture}\hspace{5mm}\overset{(\ref{violating})}=\hspace{5mm} 0.
\end{equation*}

\subsection{A faithful representation}
\label{sec:faithf-repr}

The relations (\ref{first-nilhecke}--\ref{violating}) may seem strange, but actually, they arise naturally
from a faithful representation.  Fix an integer $k$, and let
$e_p(\mathbf{Y})$ be the elementary symmetric function and
$h_p(\mathbf{Y})$ the complete symmetric function in an alphabet $\mathbf{Y}$.
\begin{definition}
  For each weakly increasing function $\kappa\colon [1,\ell]\to [0,k]$, we define an ideal $I_\kappa\subset
  S=\K[Y_1,\dots, Y_k]$ generated by 
  $h_p(Y_1,\dots, Y_{\kappa(q)})$ for all $q\in [1,\ell]$ and $p>q-\kappa(q)-1$, and
  $h_p(Y_1,\dots, Y_{k})$ for all $p>\ell-k$.
\end{definition}
The most important special case is when $\kappa=0$; in this case, the
$I_\kappa$ is generated by $h_p(Y_1,\dots, Y_{k})$ for $p>\ell-k$.  On
the other hand if $\ell=2,k=1$ and $\kappa(1)=0,\kappa(2)=1$, then we
have that $h_1(Y_1)=Y_1$ is a generator of $I_{\kappa}$ (coming from
$q=1$).  

Taking the coefficients of $t^p$ on LHS and RHS of 
\[\prod_{i=1}^j \frac{1}{(1-tY_i)}=\frac{\prod_{i=j+1}^k
  (1-tY_i)}{\prod_{i=1}^k (1-tY_i)},\] we observe that
\begin{multline}
  h_p(Y_1,\dots,Y_j)=h_p(Y_1,\dots, Y_k)-e_1(Y_{j+1},\dots, Y_k)
  h_{p-1}(Y_1,\dots, Y_k)\\ +e_2(Y_{j+1},\dots, Y_k) h_{p-2}(Y_1,\dots,
  Y_k)-\cdots+(-1)^{k-j}Y_{j+1}\cdots Y_k h_{p-k+j}(Y_1,\dots,
  Y_k)\label{eq:1}
\end{multline}
so $h_p(Y_1,\dots,Y_j)\in I_\kappa$ if $p>\ell-j$. 
Another useful observation is that if
$\kappa(1)>0$, then $1\in I_\kappa$.
\begin{remark}
  As we'll discuss in Section \ref{sec:geom-grassm}, this quotient
  ring is the cohomology ring of a particular smooth Schubert cell in
  its Borel presentation.  Thus, this foreshadows a geometric
  construction of our algebra as discussed in that section. 
\end{remark}
Let $\mathbf{Y}=\{Y_1,\dots, Y_k\}$ with the usual action of the symmetric group $S_k$
and its generators $s_i=(i,i+1)$.
\begin{lemma}\label{faithful-rep}
  The algebra $T^\ell_{\ell-2k}$ acts on the sum $\oplus_\kappa
  S/I_\kappa$ over weakly increasing
  functions sending $e_\kappa$ to the projection to the summand $S/I_\kappa$, and the
  other elements acting by the formulae:
  \begin{align*}
    y_{i,\kappa}(f(\mathbf{Y}))&=Y_i f(\mathbf{Y})\\
    \psi_{i,\kappa}(f(\mathbf{Y}))&=\frac{f(\mathbf{Y})-f(s_i\cdot\mathbf{Y})}{Y_{i+1}-Y_i}&&
    (i\notin \operatorname{im}\kappa)\\
   \iota^+_{i,\kappa}(f(\mathbf{Y}))&= f(\mathbf{Y}) &&
    (i-1\in \operatorname{im}\kappa)\\
    \iota^-_{i,\kappa}(f(\mathbf{Y}))&=Y_i f(\mathbf{Y})  &&
    (i\in \operatorname{im}\kappa)\\
  \end{align*}
\end{lemma}
Since these elements generate the algebra, these formulae determine
the representation.  The formula for general
$\psi_{i,\kappa}$ is more complicated, but easily deduced from the
formulae above.
\begin{proof}
  In \cite[4.12]{Webmerged}, it is shown that these operators on sums of
  copies of the polynomial rings satisfy all the relations of $T^\ell$
  except the violating relation (\ref{violating}), that is, they define an action of the
  algebra $\tilde{T}^\ell$.  

Next, we wish to check that $\tilde{T}^\ell$
  preserves the ideals $I_\kappa$, so that the action on the quotients
  is well-defined.  This is essentially tautological
  for $e_\kappa$ and $y_{i,\kappa}$.  The action of $\psi_i$ commutes
  with multiplication by any polynomial which is symmetric in the variables
  $Y_i$ and $Y_{i+1}$.  Thus, if $i\notin \operatorname{im}\kappa$, we
  have that the defining polynomials for the ideal $I_\kappa$ are
  indeed symmetric in these variables, so this ideal is invariant.

Thus, we have reduced to showing this invariance for $\iota^\pm_{i,\kappa}$.
It's clear that if we have an inequality
  $\kappa'(m)\geq \kappa(m)$ for all $m\in [1,\ell]$, then $I_\kappa\subset I_{\kappa'}$.  Since
  $\kappa^+_i\geq \kappa$, we have that $\iota^+_{i,\kappa}$ induces a map.
For $\iota^-_{i,\kappa}$, we have no such inclusion, but we are not
trying to check that the identity induces a map.  We must instead show
that $Y_ih_p(Y_1,\dots, Y_{\kappa(q)})\in I_{\iota^-_{i,\kappa}}$ for
$p>q-\kappa(q)-1$.

 If $\kappa(q)\neq i$, then this is clear from the
definition.  Now assume $\kappa(q)=i$. As discussed above, if
$\kappa(q+1)\geq i+1$, then we have that $h_p(Y_1,\dots,
Y_{\kappa(q)})$ is already in $ I_{\iota^-_{i,\kappa}}$; the multiplication by $Y_i$ is not even
necessary.  Thus, we need only consider the case where
$\kappa(q+1)=i$. 
In this case, we have the desired inclusion when $p>q-\kappa(q)$, so
we can restrict further to the case $p=q-i$.  
Then we have that  \[Y_ih_{q-i}(Y_1,\dots,
Y_{i})=h_{q-i+1}(Y_1,\dots, Y_{i})-h_{q-i+1}(Y_1,\dots, Y_{i-1}).\] We
have just seen that the former term lies in $I_{\iota^-_{i,\kappa}}$,
and the latter does by definition. 

Finally, it remains to check that this action factors through
$T^\ell$.  As we observed, $S/I_\kappa=0$ if
  $\kappa(1)>0$, so the relation  (\ref{violating}) is immediate
  modulo $I_\kappa$.
\end{proof}

\begin{lemma}\label{T-faithful}
  The action of $T^\ell$ on its polynomial representation is faithful.
\end{lemma}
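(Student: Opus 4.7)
The plan is to leverage the cellular basis $\{C_{\sS,\sT}\}$ from the preceding theorem. Suppose $x \in T^\ell_{\ell-2k}$ acts as zero on the polynomial representation $\bigoplus_\kappa R/I_\kappa$, and expand $x = \sum c_{\sS,\sT} C_{\sS,\sT}$ uniquely in the cellular basis; the goal is to force each $c_{\sS,\sT}=0$. Since $C_{\sS,\sT}$ has a fixed bottom sequence $\kappa(\sT)$ and top sequence $\kappa(\sS)$, we have $C_{\sS,\sT} e_{\kappa} = \delta_{\kappa, \kappa(\sT)} C_{\sS,\sT}$ and its image lies in $R/I_{\kappa(\sS)}$. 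Thus the action splits into blocks indexed by pairs of sequences, and I can reduce to one block at a time.

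Within each block, the cellular axioms provide a two-sided filtration of $T^\ell_{\ell-2k}$ indexed by the Young diagrams $\lambda$ in the $k\times(\ell-k)$ box, ordered by dominance. The strategy is to argue by descending induction on $\lambda$: for each shape I would show that the $d_\lambda^2$ basis vectors $C_{\sS,\sT}$ on $\lambda$ act as linearly independent operators modulo contributions from smaller cells, where $d_\lambda$ is the number of backdrops on $\lambda$. Concretely, for each backdrop $\sT$ I would identify a specific ``test vector'' $v_\sT \in R/I_{\kappa(\sT)}$ — most naturally the image of $1$ under an appropriate operator — such that $C_{\sS,\sT} \cdot v_\sT$ has a distinguishable leading monomial in $R/I_{\kappa(\sS)}$ indexed uniquely by $\sS$. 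Combined with the induction hypothesis that lower-cell terms are detectable at strictly smaller shapes, this forces $c_{\sS,\sT}=0$.

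To execute the leading-term computation, I would use the explicit formulas of Lemma \ref{faithful-rep}. Reading $C_{\sS,\sT}=B_\sS B_\sT^*$ from bottom to top, the factor $B_\sT^*$ applied to $1\in R/I_{\kappa(\sT)}$ expresses as a composition of Demazure operators, multiplications by $Y_i$'s, and the $\iota^\pm$-operators, producing a polynomial whose leading term combinatorially encodes $\sT$; the factor $B_\sS$ then transports this to $R/I_{\kappa(\sS)}$ via the same kinds of operators, yielding a polynomial whose leading behavior in a suitable monomial order encodes $\sS$.

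The main obstacle will be making this ``leading-term'' analysis precise: exhibiting a compatible monomial (or Schubert-like) ordering on $\bigoplus_\kappa R/I_\kappa$ under which the action of $C_{\sS,\sT}$ has detectable leading coefficient, and verifying that the bookkeeping of backdrops matches this ordering. The key input is the classical faithfulness of the nilHecke action on the cohomology of the Grassmannian, already invoked to prove $R^\ell \cong e_0 T^\ell e_0$; this handles the base case $\lambda = $ empty (or the top cell, depending on the convention), and the cellular structure propagates faithfulness to all of $T^\ell_{\ell-2k}$.
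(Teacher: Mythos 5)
Your plan is genuinely different from the paper's, and the ingredients (cellular basis, nilHecke faithfulness) are the right ones, but as written it has a gap exactly where the real work must happen: you yourself flag that ``making the leading-term analysis precise'' is the main obstacle and then leave it open. To make the descending induction run you would need (i) an explicit filtration or monomial order on $\bigoplus_\kappa R/I_\kappa$ compatible with the cell order, (ii) a concrete choice of test vector $v_\sT$ for each backdrop, and (iii) a verification that $C_{\sS,\sT}\cdot v_\sT$ has a nonzero leading term uniquely labelled by $\sS$, with all contributions from other shapes and other $\sT'$ with the same bottom sequence $\kappa(\sT)$ strictly lower in that order. None of this is exhibited, and the bookkeeping of Demazure operators mixed with the $\iota^\pm$ insertions is exactly the kind of computation that can fail to have a clean leading term; at the moment the proposal is an outline of a strategy rather than a proof.

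The paper's argument avoids the leading-term analysis entirely by a reduction. Given $x\ne 0$ in the kernel, since the kernel is a two-sided ideal one multiplies $x$ above and below by ``sweeping'' diagrams that push every black strand to the right of every red strand, producing an element $x'\in e_0T^\ell e_0$. The paper observes that sweeping sends the cellular basis vectors with a fixed top and bottom $\kappa$ to a linearly independent set, so $x'\ne 0$. By Proposition \ref{R-T} and the subsequent theorem, $e_0T^\ell e_0\cong R^\ell$ acts faithfully on $R/I_0$ (Lauda's identification with a matrix ring over $H^*(\Gr(k,\ell))$), forcing $x'=0$ --- a contradiction. Both proofs bottom out in nilHecke faithfulness, but the paper's reduction is a short diagrammatic trick plus one injectivity observation about sweeping, whereas your route requires a block-by-block, shape-by-shape analysis of the action of every cellular basis vector that you have not actually carried out. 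If you want to pursue your approach, the missing step to nail down is the compatibility of the cell order with a filtration of $\bigoplus_\kappa R/I_\kappa$ (via Schubert classes in the $Y_\kappa$ geometry of Section \ref{sec:geom-grassm}); alternatively, adopt the paper's sweeping reduction, which gets you there much faster.
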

\begin{proof}
To simplify the exposition here, we'll assume that the result is true
on $e_0T^\ell e_0$; this will be established at the end of  the proof of Proposition \ref{R-T}.

Assume that we have an element $k$ of its kernel.  
Since the kernel is a two-sided ideal, we can multiply at the bottom
and top by elements which sweep all strands to the far
  right, and obtain an element of the kernel $k'$ where both top and bottom
  have $\kappa=0$.  For example:
\[\tikz{\node(a) at (0,0) {\tikz[very thick]{  
\draw[wei] (0,0) -- (0,1); \draw[wei] (.5,0) --
  (.5,1); \draw (1,0) to[out=70,in=-150] (2.5,1); \draw[wei] (1.5,0) --
  (1,1);\draw (2,0) -- (2,1); \draw[wei] (2.5,0) to[out=140,in=-80]
  (1.5,1); \draw[wei] (3,0) -- (3,1);\draw[wei] (3.5,0) -- (3.5,1);
  \draw (4,0) -- (4.5,1); \draw[wei] (4.5,0) -- (4,1); \draw[wei] (5,0) -- (5,1);
  \draw (5.5,0) -- (5.5,1); }}; 
\node(b) at (7,0) {\tikz[very thick]{  
\draw[wei] (0,0) -- (0,1); \draw[wei] (.5,0) --
  (.5,1); \draw (1,0) to[out=25,in=-145] (4.5,1); \draw[wei] (1.5,0) --
  (1,1);\draw (2,0) -- (4,1); \draw[wei] (2.5,0) to[out=140,in=-80]
  (1.5,1); \draw[wei] (3,0) -- (2,1);\draw[wei] (3.5,0) -- (2.5,1);
  \draw (4,0) -- (5,1); \draw[wei] (4.5,0) -- (3,1); \draw[wei] (5,0) -- (3.5,1);
  \draw (5.5,0) -- (5.5,1); }}; \draw[thick,->] (a)--(b);}\]

This sweeping operation sends the cellular basis vectors with a given
backdrop to the basis vector where we change every label to $\ell$,
but retaining the order on labels.  In the example above, the labels
change from $(4_2,4_1,7_1,8_1)$ to $(8_2,8_1,8_3,8_4)$.

If we fix the set of labels used in the backdrop, this sends distinct
backdrops to distinct backdrops.  
Similarly, if we fix the slice at the 
top and bottom of the diagram, sweeping sends the basis vectors to a
subset of the basis, which is thus linearly independent.  Thus if $k\neq 0$, then
$k'\neq 0$. 
 
The resulting element
  can be straightened using the relations to be a usual nilHecke
  diagram to the right of all red strands.  This diagram must act
  trivially on $S/I_0$, which is what we obtain
  for the polynomial representation when $\kappa=0$.  Since by
  assumption $e_0 T^\ell e_0$ acts faithfully on this
  space, we must have that all of $T^\ell$ acts faithfully. 
\end{proof}

\subsection{The cyclotomic nilHecke algebra}
\label{sec:cycl-nilh-algebra}

A family of closely related algebras is the cyclotomic nilHecke
algebra $R^\ell=\oplus_n R^\ell_n$, as
discussed in \cite[\S 5.1]{Lauintro}.  The algebra $R^\ell_n$ is the quotient of
the span of Stendhal diagrams with no red strands and $k=(\ell-n)/2$
black strands, with only the relations
(\ref{first-nilhecke}--\ref{black-bigon}) and in place of
(\ref{violating}), we have the relation that $y_1^\ell=0$. Here, we
use $y_i,\psi_i$ to denote diagrams as in  \eqref{eq:diagrams}; since
there are 0 red strands, there is no need to include a function
$\kappa$ (which would have $\emptyset$ as its domain).

\begin{proposition}\label{R-T}
  The map $\imath\colon R^\ell\to T^\ell$ which places a nilHecke diagram to the
  right of $\ell$ red strands induces an isomorphism $R^\ell\cong
  e_0T^\ell e_0$.
\end{proposition}
This is a special case of \cite[4.21]{Webmerged}.
\begin{proof}
  First we check that this map is well-defined. The relations (\ref{first-nilhecke}--\ref{black-bigon})
  are unchanged and thus hold. We need only check that the
  image $y_1e_0$ of $y_1$ under this homomorphism satisfies $y_1^\ell e_0=0$.

  This is an immediate consequence of the relations (\ref{red-dot}--\ref{violating}):
 \begin{equation*}
  \begin{tikzpicture}[very thick,baseline=-.6cm]
    \draw[wei] (1.2,-1) .. controls (2.8,0) ..  (1.2,1);
\node at (2.9,0){$\cdots$};
\draw[wei] (2.2,-1) .. controls (3.8,0) ..  (2.2,1);
       \draw (2.8,-1) .. controls (1.2,0) ..  (2.8,1);
           \node at (.3,0) {=};
    \draw (-1.2,0)  +(0,-1) -- +(0,1);
       \draw[wei] (-2.8,0)  +(0,-1) -- +(0,1);
\node at (-2.3,0){$\cdots$};
       \draw[wei] (-1.8,0)  +(0,-1) -- +(0,1);
       \fill (-1.2,0) node[right]{$\ell$} circle (3pt);
\node at (4.5,0){$=0.$};
  \end{tikzpicture}
\end{equation*}

Consider an element in the image of $\imath$; this is obtained by
starting with the idempotent $e_0$, and multiplying it by elements
$\psi_i$ and $y_i$. 
  From the  formulae of Lemma \ref{faithful-rep}, we see that the action of
  these elements are given by Demazure operators and multiplication on
  $S/I_0$.
Thus, the usual action on the nilHecke algebra
  on polynomials, as in \cite[\S 2.3]{KLI}, factors through the map
  $\imath$.  
That is, we have a commutative diagram:
\begin{equation}
\tikz[->,thick,baseline]{
\matrix[row sep=10mm,column sep=20mm,ampersand replacement=\&]{
\node (a) {$R^\ell_n$}; \& \& \node (c) {$\End(S/I_0)$}; \\
\& \node (e) {$e_0 T^\ell_ne_0$}; \& \\
};
\draw (a) -- (c) ;
\draw (a) -- node[below left]{$\imath$} (e) ; 
\draw (e) -- (c) ; 
}\label{eq:triangle}
\end{equation}
 In \cite[5.3]{Lauintro}, Lauda shows that this
  action of $R^\ell$ induces an isomorphism between the cyclotomic nilHecke
  algebra and a matrix ring over the cohomology of the Grassmannian,
  so the top arrow of \eqref{eq:triangle} is injective. Thus $\imath$
  must be injective as well.

In order to see that $\imath$ is surjective as well, we must show
  that any diagram with $\kappa(i)=0$ for all $i$ at both top and bottom can be
  written as a sum of diagrams where all black strands stay right of
  all red ones.  This is easily achieved using the relations
  (\ref{red-triple-correction}) and (\ref{cost}).

Note that the fact that we have an isomorphism $R^\ell\cong e_0T^\ell
e_0$ and the fact that the top arrow of \eqref{eq:triangle} is
injective shows that the action map $e_0 T^\ell_ne_0\to \End(S/I_0)$
is injective, as needed in the proof of Lemma \ref{T-faithful}.
\end{proof}

\subsection{Decategorification}
\label{sec:decategorification}

The algebra $T^\ell$ appears in a number of different ways.  Perhaps most
significant for us is that it categorifies certain tensor product
representations of $\mathfrak{sl}_2$.  

\begin{definition}
  We let $T^\ell_n\mmod$ be the category of finitely generated
  left $T^\ell_n$ modules, and $T^\ell_n\gmod$ the category of
  finitely generated graded
  modules over the same algebra.
\end{definition}

We have a natural map
$\phi\colon T^\ell_n\to T^\ell_{n-2}$ given by adding a black strand
at far right.  This map is a homomorphism but not unital; instead it sends
the identity to an idempotent $e_\phi$ given by the sum of the
idempotents $e_\kappa$ where the rightmost strand is black, i.e. $\kappa(\ell)<k$.
\begin{definition}
  We let \[\fF(M)=T^\ell_{n-2}\otimes_{T^\ell_n}M\colon T^\ell_n\mmod \to T^\ell_{n-2}\mmod\] be the induction
  functor for this map.

We let $\fE(M)=e_\phi M$ be the functor  biadjoint (up to grading
shift) with $\fF$.
\end{definition}
The functor $\fE$ is by definition the right adjoint of $\fF$.  The
fact that is is left adjoint is not obvious; it follows from the
existence of a categorical $\mathfrak{sl}_2$-action defined by these functors:
\begin{theorem}[\mbox{\cite[4.28]{Webmerged} }]
  The functors $\fE$ and $\fF$ define a categorical action
  of $\mathfrak{sl}_2$,
  in the sense of Chuang and Rouquier \cite{CR04}.  
\end{theorem}

Similarly, we have a nonunital inclusion $\eta \colon T^\ell_n\to
T^{\ell+1}_{n+1}$, by simply adding a new red strand at the far right,
and we let $\fI$ be the extension of scalars functor $\fI(M)=T^{\ell+1}_{n+1}
\otimes_{ T^\ell_n}M$ for the map $\eta$.   

Note that our projective modules $P_\kappa$ can also be built
with the functors $\fF$ and $\fI$ as follows: if we use $P_\emptyset$
to denote the unique irreducible module over $T^{0}_{0}\cong \K$, then
\begin{equation}
P_{\kappa}\cong \fF^{k-\kappa(\ell)}\fI
\fF^{\kappa(\ell)-\kappa(\ell-1)}\fI\cdots\fI\fF^{\kappa(1)}P_\emptyset\label{eq:2}
\end{equation}
since the RHS is defined as induction by an algebra inclusion $\K\to
T^\ell$ sending $1\mapsto e_{\kappa}$, and $P_\kappa=T^\ell e_{\kappa}\cong
T^\ell\otimes_{\K e_{\kappa}}\K$.

Now, we'll relate this picture to the tensor product $(\C^2)^{\otimes
  \ell}$; for notational reasons, it will be easier to think of this
as a $\ell+1$-term tensor product with a trivial module spanned by
$\mathbbm{1}$ as the first term. 
 We'll always consider $\C^2$ with its usual basis $\left\{\left[\begin{smallmatrix}
  1\\0
\end{smallmatrix}\right],\left[\begin{smallmatrix}
  0\\1
\end{smallmatrix}\right]\right\}$, and the tensor product $(\C^2)^{\otimes
  \ell}$ with the induced tensor product basis.  We can label these
basis vectors as $s_\la$ where $\lambda$ is the partition which has a part of each time $\left[\begin{smallmatrix}
  0\\1
\end{smallmatrix}\right]$ appears, with the length of the part being
the number of $\left[\begin{smallmatrix}
  1\\0
\end{smallmatrix}\right]$'s to the left of that instance.  For a
vector of weight $n$, the number
of parts is $k=(\ell-n)/2$ (possibly including parts of length 0), and
the resulting partition fits inside a $k\times (\ell-k)$ box. For
example, the basis vectors \[  \left[\begin{smallmatrix}
  1\\0
\end{smallmatrix}\right]\otimes \left[\begin{smallmatrix}
  1\\0
\end{smallmatrix}\right], \left[\begin{smallmatrix}
  0\\1
\end{smallmatrix}\right]\otimes \left[\begin{smallmatrix}
  1\\0
\end{smallmatrix}\right],\left[\begin{smallmatrix}
  1\\0
\end{smallmatrix}\right]\otimes \left[\begin{smallmatrix}
  0\\1
\end{smallmatrix}\right],\left[\begin{smallmatrix}
  0\\1
\end{smallmatrix}\right]\otimes \left[\begin{smallmatrix}
0 \\1
\end{smallmatrix}\right]\] correspond to the partitions
$\emptyset,(0), (1), (0,0)$ (which are the only partitions fitting in
a $0\times 2,1\times 1,$ or $2\times 0$ box).  Note that the basis
vector is only uniquely specified if $\ell$ and $\la$ are fixed. 

This process is often visualized by drawing a path which
travels SW to NE with $\left[\begin{smallmatrix}
  0\\1
\end{smallmatrix}\right]$ corresponding to a vertical line segment and $\left[\begin{smallmatrix}
  1\\0
\end{smallmatrix}\right]$ to a horizontal.  This will connect the SW
and NE corners of a $k\times (\ell-k)$.  The region NW of this line
inside the box 
is the Young diagram of the partition in French notation.  Below, we
show the examples of $\emptyset,(0), (1), (0,0)$ with $\ell=2$ and
$(1,2)$ with $\ell=5$; in the last case, we have $s_{(1,2)}=\left[\begin{smallmatrix}
  1\\0
\end{smallmatrix}\right]\otimes \left[\begin{smallmatrix}
  0\\1
\end{smallmatrix}\right]\otimes \left[\begin{smallmatrix}
  1\\0
\end{smallmatrix}\right]\otimes \left[\begin{smallmatrix}
  0\\1
\end{smallmatrix}\right]\otimes \left[\begin{smallmatrix}
  1\\0
\end{smallmatrix}\right]$:
\begin{equation}
 \tikz{\draw[very thick, baseline] (-.5,0) -- (.5,0) ;}\qquad
\tikz{\draw[very thick, baseline] (-.25,-.25) -- (-.25,.25)--
  (.25,.25) ;\draw[thin, dashed] (-.25,-.25) -- (.25,-.25)-- (.25,.25)
  ;} \qquad
\tikz{\fill[pattern=north west lines ] (-.25,-.25) -- (.25,-.25)--
  (.25,.25) --(-.25,.25) --cycle; \draw[very thick, baseline] (-.25,-.25) -- (.25,-.25)--
  (.25,.25) ;\draw[thin, dashed] (-.25,-.25) -- (-.25,.25)-- (.25,.25)
  ;  }\qquad \tikz{\draw[very thick, baseline] (0,-.5) -- (0,.5) ;}\qquad
\tikz{\fill[pattern=north west lines ] (-.5,-.5) -- (-.5,.5)--
  (.5,.5) --(.5,0) --(0,0) --(0,-.5) --cycle; \draw[very thick, baseline] (-.5,-.5) -- (0,-.5)--
 (0,0) -- (.5,0)-- (.5,.5) -- (1,.5);\draw[thin, dashed] (-.5,-.5) --
 (-.5,.5)-- (1,.5)--(1,-.5) --cycle;\draw[thin, dashed] (.5,-.5) --
 (.5,.5)-- (0,.5)--(0,-.5) --cycle; \draw[thin, dashed] (-.5,0) --
 (1,0);  }\label{eq:partitions}
\end{equation}

Let $K^0(T^\ell_n\mmod)$ be the Grothendieck group of the category
$T^\ell_n\mmod$.  
\begin{theorem}[\mbox{\cite[4.38]{Webmerged}}]
\label{th:Gr-iso}
  The  sum $\bigoplus_n K^0(T^\ell_n)\otimes_\Z\C$ is canonically isomorphic to
  $(\C^2)^{\otimes \ell}$, via the map sending $[S_\lambda]\mapsto s_\la$.  This isomorphism sends $
  K^0(T^\ell_n)\otimes_\Z\C$ to the weight $n$ subspace. 
\end{theorem}
\begin{proof}
  The classes $[S_\lambda]$ are a basis of $\bigoplus_n
  K^0(T^\ell_n) \otimes_\Z\C$ by Corollary \ref{cellular
    consequences}, and $s_\la$ are a basis of $(\C^2)^{\otimes \ell}$
  by standard results about tensor products.  Thus we have an
  isomorphism of vector spaces.
\end{proof}

Let $I\colon (\C^2)^{\otimes \ell}\to (\C^2)^{\otimes \ell+1}$ be the
inclusion $v\mapsto v\otimes
\left[\begin{smallmatrix}
  1\\0
\end{smallmatrix}\right]$ given by tensor product with the obvious highest
weight vector $\left[\begin{smallmatrix}
  1\\0
\end{smallmatrix}\right]\in \C^2$.   This map sends basis vectors to
basis vectors, and leaves the resulting partition unchanged.  Let $E,F$ denote the
standard Chevalley generators of $\mathfrak{sl}_2$, acting as usual on
the tensor product representation.  That is, they act by the sums:
\[E=\sum_{k=1}^\ell 1^{\otimes  k-1}\otimes \left[\begin{smallmatrix}
  0&1\\0&0
\end{smallmatrix}\right]\otimes 1^{\otimes \ell-k}\qquad F=\sum_{k=1}^\ell 1^{\otimes  k-1}\otimes \left[\begin{smallmatrix}
  0&0\\1&0
\end{smallmatrix}\right]\otimes 1^{\otimes \ell-k}.\]
One can easily work out the action of these on the vectors $s_\la$.
The vector $Es_\la$ is a sum of the $s_{\mu}$'s obtained by deleting
the $i$th part $\la_i$ from $\la$, and increasing all parts $\la_j$
for $j>i$ by
1. We let $\chi^+_\la$ be the set of such partitions. For example, $Es_{(0,0)}=s_{(1)}+s_{(0)}$ with the two terms
coming from deleting the first and second parts respectively, and
$Es_{(1,2)}=s_{(1)}+s_{(3)}$.   If we
draw the partition inside its box as in \eqref{eq:partitions},
$\chi^+_\la$ is the set of all partitions obtained by turning one vertical segment of the boundary horizontal.
The operator $F$ acts in the same way on the transpose partition, that
is, it sums over all ways of turning one horizontal segment vertical;
we let $\chi^-_\la$ be the set of all partitions obtained this way.
Note that $\chi^-_\la$ depends on $\ell$: for example,
$\chi^-_{\emptyset}=\{ (0),(1),\dots, (\ell-1)\}$.  In contrast, $\chi^+_\la$ does
not depend on $\ell$.  

In order to show that this isomorphism is equivariant, let us consider
how $\fE,\fF,\fI$ act on standard modules.
\begin{proposition}\label{prop:act-filter}\mbox{}
  \begin{enumerate}
  \item   The module $\fE S_\lambda$ has a filtration by the standard modules
  $S_{\mu}$ for $\mu \in  \chi^+_\la$.
\item   The module $\fF S_\lambda$ has a filtration by the standard modules
  $S_{\mu}$ for $\mu \in  \chi^-_\la$.
\item For all $\la$, we have  $\fI S_\lambda\cong S_\lambda$.
\end{enumerate}
\end{proposition}
\begin{proof}
Throughout this proof, we work with the left-justified basis.

First, we prove (1).  The restricted module $e_\phi S_\lambda$ is spanned by the basis
  vectors for backdrops in which $\ell$ appears at least once.  The
  submodules $M_i$ of the filtration are the span of the vectors where
  the largest occurence of $\ell$ (that is, $\ell_p$ where $p$ is the
  number of rows with label $\ell$) appears in row $j$ with $j\geq
  i$.  This is the same as looking at the black strand which is at the
  far right of the diagram at the top, and requiring that it be in the
  rightmost $k-i+1$ strands at the bottom of the diagram.  The action
  of $T^\ell_{n+2}$  can only change which strand connects to the far
  right terminal at the top if a dot slides across that strand using
  (\ref{first-nilhecke}).  In that case, the resulting diagrams will
  still lie in $M_i$: the terminal at the bottom can move leftward,
  but not rightward.  

Let $\mu_i$ be the partition obtained by removing the $i$th smallest
part from the partition $\la$, that is, flipping the $i$th vertical
segment on the boundary when reading from the SE.
There is a surjective map from $S_{\mu_i}\to M_i/M_{i+1}$, sending the
basis vector for a backdrop $\sS$ on $\mu_i$ to the vector for the 
backdrop $\bS'$ on $\la$, with the $i$th part given label $\ell$, larger than
any other $\ell$ which appears, and all other labels the same as $\bS$.    This is shown in the diagram below:
\[\tikz[baseline,very thick,xscale=2]{ \node (a) [inner xsep=50pt, inner ysep=15pt,draw] at (0,0){$b_{\sS}$};
\draw[wei] (a.140) -- +(0,.63);
\draw (a.40) -- +(0,.63);
\node at (0,1.05){$\cdots$}; 
\draw[wei] (a.-110) -- +(0,-.3);
\draw[wei] (a.-70) -- +(0,-.3);
\draw[wei] (a.-140) -- +(0,-.3);
\draw (a.-40) -- +(0,-.3);
        }\mapsto \tikz[baseline,very thick,xscale=2]{ \node (a) [inner xsep=50pt, inner ysep=15pt,draw] at (0,0){$b_{\sS}$};
\draw (a.40) -- +(0,.63);
\draw[wei] (a.140) -- +(0,.63);
\node at (0,1.05){$\cdots$}; 
\draw[wei] (a.-140) -- +(0,-.3);
\draw[wei] (a.-110) to[out=-90,in=90] +(-.1,-.3);
\draw[wei] (a.-70) to[out=-90,in=90] +(.1,-.3);
\draw (a.-40) -- ++(0,-.3);
\draw (0,-1.05) to[in=-90,out=60] (1.2,-.6)
to[in=-90,out=90] (1.2,1.38);
        }\]
Each one of these maps must be injective, since the dimension of $\fE
S_\lambda$ is the same as the sum of the dimensions of $S_{\mu}$.

Next, we turn to (2): the module $S_\lambda$ is a quotient of
$P_{\kappa(\lambda)}$, by the submodule generated $B_{\sS}^*$ for
$\sS$ not the tautological backdrop on $\la$, and by the exactness of
$\fF$, we also have a surjective map $\fF P_{\kappa(\lambda)}\to \fF S_\lambda$.
For each $\mu\in \chi^-_\la$, there is a special backdrop $\sS_\mu$ with the largest value of
$\ell$ on the ``new'' part and all other parts with the same
labeling from the tautological backdrop on $\la$. 
Let $K$ be the span of
the vectors $C_{\sS,\sS'}$ with $\sS$ any backdrop other
than $\sS_\mu$ for $\mu \in \chi^-_\la$. it's easy to see that $K$
lies in the kernel of the map $\fF P_{\kappa(\lambda)}\to \fF S_\lambda$, the cellular basis
structure shows that $K$ is a submodule, and the generating
vectors of the kernel lie in $K$, so it follows that $K$ is precisely the
kernel.

This shows that $C_{\sS_\mu,\sS'}$ for $\mu \in \chi^-_\la$ are a
basis of $\fF S_\lambda$.  If we let $N_i$ be the span of these
vectors where the ``new'' part is the $j$th, for $j\leq i$, then we can
see that $N_i$ is a submodule, by the cellular structure.  We have an
isomorphism $S_{\mu_i} \cong N_i/N_{i-1} $, sending $b_{\sS'}\mapsto
C_{\sS_\mu,\sS'}$.  

Finally, we wish to prove (3).  
In this case, both modules are
quotients of the projective $P_{\kappa'}$ where $\kappa'$ is the
extension of $\kappa(\la)$ to $[1,\ell+1]$ by $\kappa'(\ell+1)=k$.  By the description of
Corollary \ref{cellular consequences}(3), $S_{\lambda}$ is the
quotient by all maps from $P_{\kappa''}$ with $\kappa''>\kappa'$.  Any
such map can be assumed to be in the image of $\fI$, since we must
have $\kappa(\ell+1)=k$ as well, and all cellular basis vectors with
bottom $\kappa'$ and top $\kappa''$ correspond to backdrops that don't
use $\ell+1$ as a label; in this case the basis vector is obtained by
adding a red strand at the far right to the basis for the same
backdrops considered for $\ell$ red strands.  This shows that the same
submodule is killed by the map $P_{\kappa'}\cong \fI
P_{\kappa(\la)}\to \fI S_\la,$ so we have the desired isomorphism.
\end{proof}
The functors $\fE,\fF$ and $\fI$ are exact, and thus naturally induce
maps $[\fE],[\fF]$ and $[\fI]$ on the Grothendieck group.  
\begin{corollary}
  The isomorphism of Theorem \ref{th:Gr-iso} intertwining the 
 induced maps
  $[\fE],[\fF],[\fI]$ on the Grothendieck group with the actions of $E,F,I$
  on $(\C^2)^{\otimes \ell}$.  
\end{corollary}
 
It immediately follows from this theorem and \eqref{eq:2} that we can
describe the image of $[P_\kappa]$ under this map: reading from left
to right, each time we encounter a black strand, we apply $F$ and each
time we encounter a red one, we apply $I$.  That is:
\begin{equation}
[P_\kappa]\mapsto p_\kappa:= F^{k-\kappa(\ell)}
(F^{\kappa(\ell)-\kappa(\ell-1)}\cdots F^{\kappa(2)-\kappa(1)}( F^{\kappa(1)}\mathbbm{1}\otimes
\left[\begin{smallmatrix}
  1\\0
\end{smallmatrix}\right])\cdots \otimes
\left[\begin{smallmatrix}
  1\\0
\end{smallmatrix}\right]). \label{eq:3}
\end{equation}

\begin{remark}
  The decategorification results of this section can be ``upgraded''
  to take into account the grading on $T^\ell$.  If we consider the
  abelian category $T^\ell\gmod$, then the
  Grothendieck group of this category
  is naturally a $\Z[q,q^{-1}]$-module where $q$ acts by decreasing the
  grading of the module.  That is, $q[M]=[M\{1\}]$, where $M\{1\}$
  denotes an isomorphic module with the grading decreased by $1$.  The action induced by $\fE$ and $\fF$ on
  this category doesn't satisfy the relations of $\mathfrak{sl}_2$,
  but rather of the quantum group $U_q(\mathfrak{sl}_2)$.  See
  \cite{KLIII,Webmerged} for more details.
\end{remark}

\section{The geometry of Grassmannians}
\label{sec:geom-grassm}

{\small \it 
\begin{quotation}
In these days the angel of topology and the devil of abstract algebra
fight for the soul of each individual mathematical domain. 

\hfill Hermann Weyl (1939)
\end{quotation}
}

In this section, we will give a geometric description of the algebra
$T^\ell$: we will realize it as a convolution algebra in homology for
some natural correspondences over Grassmannians.  This construction
fits in with many geometric constructions of KLR type algebras, from
\cite{SWschur,WebwKLR,VV} and others. Of course, before
doing this, we need to give a bit of background on the geometry of Grassmannians.

\subsection{Definitions}
\label{sec:definitions}

Fix integers $k,\ell$ and let $\Gr(k,\ell)$ be the Grassmannian of
$k$-planes in $\C^\ell$.  Let $n=\ell-2k$.
This projective variety has a well-known decomposition into Schubert
cells.  We have a fixed flag $\C^1\subset \C^2\subset
\cdots\subset\C^{\ell-1}\subset \C^\ell$ with $\C^m$ identified with
the span of the first $m$ unit vectors.  For each weakly increasing
function $\kappa\colon [1,\ell]\to [0,k]$ such that
$k-\ell+m\leq\kappa(m)< m$ and $\kappa(m+1)\leq \kappa(m)+1$, we let \[X_{\kappa}=\{V\in
\Gr(k,\ell)\mid\dim(V\cap \C^{m-1})=\kappa(m)\}\] and also consider its closure, the Schubert
variety
\[\bar{X}_\kappa=\{V\in
\Gr(k,\ell)\mid\dim(V\cap \C^{m-1})\geq \kappa(m)\}.\]
The functions $\kappa$ satisfying the conditions we have written are precisely
those of the form $\kappa(\la)$ for some partition.  We can
reconstruct $\la$ from $\kappa$ via the formula $\la_p=\max\{m|\kappa(m)< p\}-p$.

Geometrically, if we consider the
graph of $\kappa$ in a $k\times \ell$ rectangle, then $\kappa$ must
remain inside a lozenge, and the partition is the size of the rows in
the lozenge above the graph.  For example, if $\ell=7$ and $k=3$, and $\kappa$ applied to
$1,\dots, 7$ gives $0,0,1,1,2,3,3$, then this graph will look like:
\begin{equation*}
  \begin{tikzpicture}
\fill[pattern=north west lines ] (1,0) -- (1,1) -- (2,1) --
(2,2) -- (3,2) -- (3,3) --(5,3) -- (5,2) -- (4,2) -- (4,1) -- (2,1)  -- (2,0)  --(0,0)
-- cycle;
\fill[gray] (0,0) -- (1,0) -- (1,1) -- (2,1) -- (2,2) -- (3,2) -- (3,3)--
(0,3) --cycle;
\fill[gray] (5,0) -- (5,1) -- (6,1) -- (6,2) -- (7,2) -- (7,0) --cycle;
    \draw (0,0) -- (0,3);
    \draw (1,0) -- (1,3);
    \draw (2,0) -- (2,3);
    \draw (3,0) -- (3,3);
     \draw (4,0) -- (4,3);
    \draw (5,0) -- (5,3);
    \draw (6,0) -- (6,3);
    \draw (7,0) -- (7,3);
\draw (0,0) -- (7,0);
\draw (0,1) -- (7,1);
\draw (0,2) -- (7,2);
\draw (0,3) -- (7,3); 
\draw[line width=4pt] (0,0) -- (2,0) -- (2,1) -- (4,1) -- (4,2) -- (5,2)
--(5,3) -- (7,3);
\end{tikzpicture}
\end{equation*}
The gray regions denote where the graph of $\kappa$ is forbidden and
the rows of the hatched region give the desired Young
diagram in a box (with French notation).  In this example, we
obtain:
\[ \begin{tikzpicture}
\fill[pattern=north west lines ] (0,0) -- (1,0) -- (1,1) -- (2,1) --
(2,3) -- (4,3) -- (0,3)
-- cycle;
    \draw (0,0) -- (0,3);
    \draw (1,0) -- (1,3);
    \draw (2,0) -- (2,3);
    \draw (3,0) -- (3,3);
     \draw (4,0) -- (4,3);
\draw (0,0) -- (4,0);
\draw (0,1) -- (4,1);
\draw (0,2) -- (4,2);
\draw (0,3) -- (4,3); 
\draw[line width=4pt] (0,0) -- (1,0) -- (1,1) -- (2,1) -- (2,3) -- (4,3);
\end{tikzpicture}\]

Each Schubert variety has a resolution of
singularities of the form \[\tilde{X}_\kappa=\{V_0=\{0\}\subset V_1
\subset \cdots \subset V_{\ell-1}\subset 
V_\ell \mid V_m\subset \C^{m-1},\dim V_m=\kappa(m)\}.\]  This has a natural map
$\tilde{X}_\kappa\to \bar{X}_\kappa$ forgetting all entries of the
flag except for $V_\ell$.  This map is a resolution of singularities
since $\tilde{X}_\kappa$ is  smooth (since it is a tower of
Grassmannian fibrations), and it is an isomorphism over the locus
$X_\kappa$ (since we are forced to take $V_m=V\cap \C^{m-1}$).

Now, let me introduce a closely related collection of varieties whose
import will not be immediately clear.  We introduce a fibration
$p_\kappa\colon Y_\kappa\to \tilde X_\kappa$ where we choose a complete flag on
$V_i/V_{i-1}$.  That is, 
\[Y_\kappa=\{W_0=\{0\}\subset W_1\subset \cdots \subset W_{k-1}\subset
W_k \mid W_{\kappa(m)}\subset \C^{m-1},\dim W_m=m.\}\] 
Note that this space makes sense for any weakly increasing $\kappa$, even
if it doesn't meet the inequalities to match a Schubert variety, and 
that $Y_\kappa$ is actually a smooth Schubert variety in the full
flag variety for any $\kappa$.  

\subsection{Convolution}

We now want to use this geometry to define an algebra, using the
method of convolution in homology.  This method is discussed in much
greater detail in \cite[\S 2.7]{CG97}.  Whenever we have an algebraic map
between smooth projective varieties $Y\to X$, the homology of the fiber
product $A=H_*(Y\times_XY;\K)$ inherits a product structure. 
Consider the projections  $p_{12},p_{13}, p_{23}\colon Y\times_X
Y\times_XY\to Y\times_X Y$ which forget the third, second and first
terms respectively.  
The product is defined on $a,b\in A$ by 
\[a\star b=(p_{13})_*(p_{12}^*a\cap p_{ 23}^* b)\]
using the fact that for maps between smooth compact manifolds, there
are pullback {\it and} pushforward maps in homology.

For the reader unfamiliar with this technique, we'll only need to
directly apply the definition for a few calculations.  First note that
pushforward by the diagonal map on the homology of $Y$ induces an
inclusion of algebras $\Delta_*\colon H_*(Y;\K)\to A$; the product
structure on homology is the intersection product.  More general elements
can be induced by a space $Z$ with two maps $h_1,h_2\colon Z\to Y$, such that both
induce the same map $Z\to X$; in this case, we consider the
pushforward $(h_1\times h_2)_*[Z]\in H_*(Y\times_XY)$.

We'll let   $X=\Gr(k,\ell)$ and $Y=\bigsqcup_\kappa Y_\kappa$ with
$p\colon Y\to X$ the usual projection. As before, we
define $n$ by $\ell-n=2k$, and denote the resulting convolution algebra
by $A^\ell_n$.  

We'll
abuse notation, and let $W_{m}/W_{m-1}$ denote the line bundle  on $Y$
whose fiber at each point is given by this line, and let
$e(W_{m}/W_{m-1})$ be the homology class given by the divisor of this
line bundle, that is, the Poincar\'e dual of its Euler class.

Let \[Z_{i,\kappa,\kappa'}=\{(W_{\bullet},W'_{\bullet})\in Y_{\kappa}\times
Y_{\kappa'}\mid W_{j}= W_j'\text{ for all } j\neq i\};\] this variety is
endowed with maps $h_1,h_2\colon Z_i\to Y$ forgetting the second and
first entry of the pair respectively. We'll also use $Z_0$ to denote
the space where we require the flags to be equal.  

There has been a profusion of variations on KLR algebras in recent
years.  These algebras, in most cases, can be geometrically realized
as a convolution algebra.  Examples include \cite[3.6]{VV}
\cite[Th. B]{WebwKLR} and \cite[0.1]{SauQH}.  In this context, let us
state the relevant theorem, those we will need to develop a few lemmas
before completing its proof:
\begin{theorem}\label{main}
  The algebras $T^\ell_n$ and $A^\ell_n$ are isomorphic via the map
  \begin{align*}
    e_\kappa&\mapsto \Delta_*[Y_\kappa]& y_m&\mapsto e(W_{m}/W_{m-1})\\
    \psi_{i,\kappa}&\mapsto (h_1\times h_2)_*
    [Z_{i,\kappa,\kappa}] &\iota_{i,\kappa}^{\pm}&\mapsto \pm (h_1\times h_2)_*[Z_{0,\kappa^{\pm}_i,\kappa}]
  \end{align*}
\end{theorem}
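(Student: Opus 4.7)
The plan is to construct an algebra homomorphism $\Phi\colon T^\ell_n \to A^\ell_n$ by sending the generators to the specified convolution classes, prove injectivity by matching the two sides' actions on the polynomial representation of Lemma \ref{faithful-rep}, and conclude surjectivity by a dimension count using the cellular basis from Section \ref{sec:natural-basis}.

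For well-definedness, I would verify the defining relations of $T^\ell_n$ one by one in $A^\ell_n$. Relations (\ref{first-nilhecke})--(\ref{black-bigon}), involving only black strands and dots, are the classical push--pull identities that realize the nilHecke algebra as the Borel--Moore homology of the Steinberg-type variety for a tower of $\mathbb{P}^1$-fibrations, and may be imported from the standard geometric construction of KLR algebras for $\mathfrak{sl}_2$ via partial flag varieties. Relations (\ref{red-dot}), (\ref{dumb}), and the leading term of (\ref{red-triple-correction}) follow from transversality of the two correspondences being composed. The correction term in (\ref{red-triple-correction}) and relation (\ref{cost}) are both excess intersection computations: for (\ref{cost}), the self-intersection of the cycle $Z_{0,\kappa^{\pm}_i,\kappa}$ has normal bundle $W_i/W_{i-1}$ pulled back to $Y_\kappa$, whose Euler class is exactly the image of $y_i$ under $\Phi$. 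Relation (\ref{violating}) reflects the fact that components $Y_\kappa$ with $\kappa(1) > 0$ either are excluded from $Y$ or contribute trivially to the convolution algebra, consistent with the vanishing $R/I_\kappa = 0$ noted after the definition of $I_\kappa$.

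For injectivity, the algebra $A^\ell_n$ acts tautologically by convolution on $\bigoplus_\kappa H^*(Y_\kappa)$. The Borel presentation of the cohomology of the smooth Schubert variety $Y_\kappa$ (foreshadowed in the remark following the definition of $I_\kappa$) identifies $H^*(Y_\kappa) \cong R/I_\kappa$, with $Y_i$ corresponding to $c_1(W_i/W_{i-1})$. Under this identification I would check by direct push--pull that $\Phi$ intertwines the convolution action of $A^\ell_n$ with the action of $T^\ell_n$ from Lemma \ref{faithful-rep}: multiplication by $e(W_m/W_{m-1})$ corresponds to multiplication by $Y_m$; the $\mathbb{P}^1$-bundle structure of $Z_{i,\kappa,\kappa}\to Y_\kappa$ yields the Demazure operator; and the closed inclusions defining $Z_{0,\kappa^{\pm}_i,\kappa}$ produce the correct $\iota^{\pm}$ actions (identity, respectively multiplication by $Y_i$, from the fact that the left-hand side has normal bundle of weight zero while the right-hand side picks up $W_i$). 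Since $T^\ell_n$ acts faithfully on its polynomial representation by Lemma \ref{T-faithful}, the kernel of $\Phi$ is trivial.

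For surjectivity, I would decompose $Y \times_X Y$ into affine cells via the Bialynicki--Birula decomposition for a generic torus action, with the cells naturally parametrized by pairs of backdrops on a common Young diagram in the $k\times(\ell-k)$ box; the resulting fundamental classes span $A^\ell_n$ and have the same cardinality as the cellular basis of $T^\ell_n$ described in Section \ref{sec:natural-basis}. Combined with injectivity, the equality of dimensions forces $\Phi$ to be an isomorphism. The principal obstacle is Step 1, specifically the excess-intersection bookkeeping behind relations (\ref{red-triple-correction}) and (\ref{cost})---in particular pinning down signs and verifying that the normal-bundle Euler class is precisely $e(W_i/W_{i-1})$ rather than a related combination. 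If this bookkeeping proves unwieldy in the geometric setting, a robust fallback is to verify each relation after applying the convolution action on $\bigoplus_\kappa H^*(Y_\kappa)$, reducing each geometric identity to the already-verified polynomial identities of Lemma \ref{faithful-rep}.
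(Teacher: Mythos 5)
Your proposal is essentially the paper's proof: injectivity via the matched faithful polynomial representations (Lemmata \ref{T-faithful}, \ref{A-faithful}, with the identification from Lemma \ref{GR} and the intertwining of actions from Lemma \ref{conv}), and surjectivity via counting $T$-fixed points of $Y\times_X Y$ indexed by pairs of backdrops on a common Young diagram and comparing against the cellular basis of $T^\ell_n$. The one notable divergence is at the start: the paper does not verify the relations of $T^\ell_n$ inside $A^\ell_n$ by direct excess-intersection and push--pull computations at all; instead it takes your stated ``fallback'' as the actual argument, observing that since both $T^\ell_n$ and $A^\ell_n$ embed faithfully into $\End(\oplus_\kappa R/I_\kappa)$ and the proposed generator images act by identical formulae, well-definedness and injectivity of the map come simultaneously and for free---entirely avoiding the sign and normal-bundle bookkeeping you rightly flag as the principal obstacle.
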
  
How is one to think about this theorem? While I would argue that this
is really the correct definition of $T^\ell_n$, and that one should
then derive the diagrammatic description, this is just moving the
problem around.  The key property of $A^\ell_n$ is that it acts on the
homology $H_*(Y;\K)$.  Let $x_i=e(W_{m}/W_{m-1})$.    

It might seem daunting to analyze such a convolution algebra, but it
can be done relatively easily using a natural representation arising
from its definition.  Examples include \cite[\S 2]{VV} and \cite[2.6]{SWschur}.
\begin{lemma}\label{A-faithful}
  The action of $A^\ell_n$ on $H_*(Y;\K)$ is faithful.
\end{lemma}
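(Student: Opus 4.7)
The plan is to match the convolution action of $A^\ell_n$ on $H_*(Y;\K)$ with the polynomial representation of $T^\ell_n$ used in Lemma \ref{T-faithful}, and then combine Lemma \ref{T-faithful} with the isomorphism of Theorem \ref{main}.

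First, I would identify $H_*(Y_\kappa;\K)$ with the quotient $R/I_\kappa$ of $R=\K[Y_1,\ldots,Y_k]$, with $Y_i$ acting by multiplication.  Since $Y_\kappa$ is a smooth projective variety built as an iterated tower of Grassmannian fibrations, its cohomology admits a Borel-type presentation: it is generated over $\K$ by the Chern classes $x_i=c_1(W_i/W_{i-1})$, and the defining relations are exactly those generating $I_\kappa$.  The relations $h_p(x_1,\ldots,x_{\kappa(q)})=0$ for $p>q-\kappa(q)-1$ arise from the constraint $W_{\kappa(q)}\subset \C^q$, which forces the quotient $\C^q/W_{\kappa(q)}$ to have rank $q-\kappa(q)$ and so makes the stated Chern classes vanish; the relations $h_p(x_1,\ldots,x_k)=0$ for $p>\ell-k$ come from the embedding into $\Gr(k,\ell)$.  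As a side benefit, this recovers the remark preceding Lemma \ref{faithful-rep}.

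Next, I would unwind the convolution definitions and verify, generator by generator, that the action on $H_*(Y;\K)=\oplus_\kappa R/I_\kappa$ of each class listed in Theorem \ref{main} agrees with the action of the corresponding element of $T^\ell_n$ under the formulas of Lemma \ref{faithful-rep}.  Concretely: $\Delta_*[Y_\kappa]$ is the projection onto the $\kappa$-summand (matching $e_\kappa$); $e(W_m/W_{m-1})$ acts by multiplication by $Y_m$ (matching $y_{m,\kappa}$); the class $(h_1\times h_2)_*[Z_{i,\kappa,\kappa}]$, when $i\notin \operatorname{im}\kappa$, varies only the $i$th step of the flag and so realizes a $\mathbb{P}^1$-bundle whose pullback/pushforward is the Demazure divided difference $(f-s_i\cdot f)/(Y_{i+1}-Y_i)$ (matching $\psi_{i,\kappa}$); and the classes $\pm (h_1\times h_2)_*[Z_{0,\kappa^\pm_i,\kappa}]$ induce, respectively, the identity-type inclusion and the multiplication-by-$Y_i$ map between distinct summands $R/I_\kappa$ and $R/I_{\kappa^\pm_i}$ (matching $\iota^\pm_{i,\kappa}$).

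Combined, these identifications show that, under the isomorphism $T^\ell_n\cong A^\ell_n$ of Theorem \ref{main}, the convolution action on $H_*(Y;\K)$ is identified with the polynomial representation on $\oplus_\kappa R/I_\kappa$.  Since the latter is faithful by Lemma \ref{T-faithful}, so is the former.

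The main obstacle is the calculation for the ``off-diagonal'' generators $\pm[Z_{0,\kappa^\pm_i,\kappa}]$: here the correspondence runs between distinct components $Y_\kappa$ and $Y_{\kappa^\pm_i}$ of different dimension, so one must carefully keep track of Poincar\'e duality, the codimension of the incidence locus, and the sign conventions implicit in the $\pm$ of Theorem \ref{main}; this is exactly what forces one of the two $\iota^\pm$ maps to end up being multiplication by $Y_i$ rather than the identity.  The Borel presentation of step one is standard but also requires some care; it could alternatively be proved by induction, applying the projective bundle formula at each stage of the tower of Grassmannian fibrations defining $Y_\kappa$.
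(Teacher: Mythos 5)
Your proposal is circular as written. You invoke Theorem \ref{main} (the isomorphism $T^\ell_n\cong A^\ell_n$) to transport faithfulness from the polynomial representation of $T^\ell_n$, but the paper's proof of Theorem \ref{main} itself \emph{uses} Lemma \ref{A-faithful}: the injectivity of the map $T^\ell_n\to A^\ell_n$ is established there precisely by identifying both algebras with their images in the polynomial/homology representation, and for $A^\ell_n$ that identification requires knowing in advance that $A^\ell_n$ acts faithfully on $H_*(Y;\K)$. (Faithfulness is also implicitly used to reduce the check that the generators of $A^\ell_n$ satisfy the Stendhal relations to a check on $H_*(Y;\K)$.) So you cannot call on Theorem \ref{main} here.

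More fundamentally, even setting aside the circular citation, the strategy of matching the action of the listed generator classes with the formulas of Lemma \ref{faithful-rep} only tells you that the subalgebra of $A^\ell_n$ generated by those classes has the same image in $\End(H_*(Y;\K))$ as $T^\ell_n$ does. It does not a priori exclude that some element of $A^\ell_n$ outside that subalgebra, or some element of the form ``generator minus its lift to $T^\ell_n$'', lies in the kernel of the action. To make your route non-circular you would also need an independent argument that the listed classes generate $A^\ell_n$, e.g.\ the torus fixed-point dimension count $\dim A^\ell_n\le \dim T^\ell_n$ that the paper carries out in the proof of Theorem \ref{main}; combined with your generator-by-generator match and Lemma \ref{T-faithful}, that \emph{would} close the argument. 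You don't mention this step.

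The paper's actual proof of Lemma \ref{A-faithful} is shorter and sheaf-theoretic: it invokes the identification $A^\ell_n\cong\Ext^\bullet(p_*\C_Y,p_*\C_Y)$ from \cite{CG97}, observes by the Decomposition theorem that $p_*\C_Y$ is a direct sum of shifts of IC sheaves, and then cites \cite[3.4.2]{BGS96} for the statement that Ext-classes between such semisimple complexes act faithfully on hypercohomology. Note that $H_*(Y;\K)$ is the hypercohomology of $p_*\C_Y$, and the $A^\ell_n$-action on it is exactly the Ext-action, which is why this result applies. This is a genuinely different mechanism from yours, and it is logically prior to Theorem \ref{main} rather than downstream of it.

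Your first two paragraphs (identifying $H_*(Y_\kappa;\K)$ with $R/I_\kappa$ and computing the convolution action of the generators) are sound and are precisely Lemmas \ref{GR} and \ref{conv} of the paper; they just cannot by themselves yield faithfulness of $A^\ell_n$ without the extra surjectivity input.
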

\begin{proof}
As shown in Ginzburg and Chriss \cite[8.6.7]{CG97}, the algebra $A^\ell_n$ is
the self-Ext algebra of $p_*\K_Y$, where $\K_Y$ is the sheaf of
locally constant $\K$-valued functions on $Y$, so it suffices to show that any
Ext between summands of this pushforward induces a non-zero map on hypercohomology.
The pushforward $p_*\K_Y$ is a parity sheaf by \cite[4.8]{JMW}.  Faithfulness follows from the same argument as
  \cite[3.2.6]{Soe00} (note that the argument in the paper is
  incorrect, and corrected in \cite{Soecor}). 
\end{proof}

\begin{lemma}\label{GR}
  The homology $H_*(Y_\kappa,\K)$ is isomorphic as an algebra under
  intersection product to $\K[x_1,\dots, x_k]$ modulo the ideal
  $I_\kappa$ generated by 
  $h_p(x_{1},\dots, x_{\kappa(q)})$ if $p> q-\kappa(q)-1$.
\end{lemma}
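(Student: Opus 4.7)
The plan is to realize $Y_\kappa$ as an iterated projective bundle and inductively compute its cohomology via Grothendieck's formula. Setting $m(j) = \min\{m : \kappa(m) \geq j\}$ for $j = 1, \ldots, k$, the defining constraints $W_{\kappa(m)} \subset \C^m$ simplify to $W_j \subset \C^{m(j)}$ for each $j$. The forgetful maps realize $Y_\kappa = Y_\kappa^{(k)} \to Y_\kappa^{(k-1)} \to \cdots \to Y_\kappa^{(0)} = \mathrm{pt}$, where $Y_\kappa^{(j)}$ records only $W_1, \ldots, W_j$, with each stage $Y_\kappa^{(j)} \to Y_\kappa^{(j-1)}$ the projective bundle $\mathbb{P}(\C^{m(j)}/W_{j-1})$ of relative dimension $m(j)-j$. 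In particular $Y_\kappa$ is smooth and projective, so intersection product on homology is identified with cup product on cohomology by Poincar\'e duality.

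At the $j$-th stage, the bundle $E_j = \C^{m(j)}/W_{j-1}$ has rank $r_j = m(j)-j+1$, and from $0 \to W_{j-1} \to \C^{m(j)} \to E_j \to 0$ we compute
\[
c(E_j) = c(W_{j-1})^{-1} = \prod_{i<j}(1+x_i)^{-1} = \sum_{p\geq 0}(-1)^p h_p(x_1,\ldots,x_{j-1}),
\]
where $x_i = e(W_i/W_{i-1})$. Since $W_j/W_{j-1}$ is the tautological line subbundle on $\mathbb{P}(E_j)$, so $c_1(\mathcal{O}(1)) = -x_j$, Grothendieck's relation $\sum_{p=0}^{r_j} c_p(E_j)\,c_1(\mathcal{O}(1))^{r_j-p} = 0$ simplifies, after absorbing signs, to
\[
\sum_{p=0}^{r_j} h_p(x_1,\ldots,x_{j-1})\, x_j^{r_j-p} = 0,
\]
which by the standard identity $h_r(x_1,\ldots,x_j) = \sum_p h_p(x_1,\ldots,x_{j-1})\, x_j^{r-p}$ is precisely $h_{m(j)-j+1}(x_1,\ldots,x_j) = 0$. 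Applying Leray-Hirsch inductively up the tower then gives $H^*(Y_\kappa;\K) \cong \K[x_1,\ldots,x_k]/J$, where $J$ is generated by the $k$ relations $h_{m(j)-j+1}(x_1,\ldots,x_j)$, one per stage.

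Finally I would check $J = I_\kappa$. For each $j$, the indices $q$ with $\kappa(q) = j$ form the interval $[m(j), m(j+1)-1]$, and the relation produced by the projective bundle at stage $j$ is exactly the boundary case $q = m(j)$ in the list $h_p(x_1,\ldots,x_{\kappa(q)})$ of the statement. The remaining listed generators at larger $q$ (hence larger $p$) follow by iterating the recursion $h_{p+1}(x_1,\ldots,x_j) = x_j\,h_p(x_1,\ldots,x_j) + h_{p+1}(x_1,\ldots,x_{j-1})$, using $m(j-1) \leq m(j)$ so that the lower-variable term $h_{p+1}(x_1,\ldots,x_{j-1})$ already lies in the ideal by induction on $j$.

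The main obstacle is purely combinatorial bookkeeping in this last step: matching the finite list of projective-bundle relations with the uniform form $h_p(x_1,\ldots,x_{\kappa(q)})$ in the statement, and handling degenerate cases such as $\kappa(1) > 0$, where $m(1) = 1$ forces $x_1$ into the ideal as a unit and collapses the quotient (in agreement with the earlier observation that $R/I_\kappa = 0$ in that case). The geometric content itself is immediate once the tower structure is in place.
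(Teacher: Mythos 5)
The method you propose—exhibit $Y_\kappa$ as an iterated projective bundle, extract one Grothendieck relation per stage via $h_{r}(x_1,\ldots,x_j)=\sum_p h_p(x_1,\ldots,x_{j-1})x_j^{r-p}$, and close with Leray--Hirsch—is sound and genuinely different from the paper's proof, which simply cites the main theorem of \cite{GR02} (Gasharov--Reiner's presentation of cohomology of smooth Schubert varieties). Your version is self-contained and more elementary, which is worth something: the tower structure is transparent, and the signs in $c(E_j)=\sum_p(-1)^ph_p$ versus $\zeta=-x_j$ cancel exactly as you say.

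However, there is a concrete arithmetic mismatch in the final matching that you should have caught by testing a small example. Take $\ell=2$, $k=1$, $\kappa=(0,1)$. Then $m(1)=\min\{m:\kappa(m)\geq 1\}=2$, so your tower gives $r_1=2$ and the single relation $h_2(x_1)=x_1^2$, i.e.\ $H^*\cong\K[x_1]/(x_1^2)$. But the paper's ideal (checked directly against the definition in \S2.2 and against the worked example, where $I_{(0,1)}=(y)$) is $(x_1)$, so $R/I_\kappa\cong\K$. These disagree by one degree. More systematically: the generator of $I_\kappa$ with the fewest variables $x_1,\ldots,x_j$ and lowest degree is $h_{m(j)-j}(x_1,\ldots,x_j)$ (take $q=m(j)$, $\kappa(q)=j$, smallest $p>q-\kappa(q)-1$), whereas your projective bundle produces $h_{m(j)-j+1}(x_1,\ldots,x_j)$. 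The matching is thus off by one across the board; the correct tower would need each stage to be $\mathbb{P}(\C^{m(j)-1}/W_{j-1})$, not $\mathbb{P}(\C^{m(j)}/W_{j-1})$. (This off-by-one appears to trace back to the displayed definition of $Y_\kappa$; one can confirm the shifted version is the intended one by counting $T$-fixed points of $Y\times_XY$ and comparing with $\dim T^\ell_n$, as the paper does in Theorem \ref{main}: with the unshifted constraints one gets too many fixed points.) Separately, your treatment of the degenerate case is confused: when $m(1)=1$ the relation is $x_1=0$, not ``$x_1$ a unit,'' and this does \emph{not} collapse the ring, so it does not reproduce $R/I_\kappa=0$ for $\kappa(1)>0$. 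You also silently take the lemma's ``$p\geq q-\kappa(q)-1$'' at face value; taken literally that puts $h_0=1$ into $I_\kappa$ whenever $\kappa(q)<q$, so it must be the strict inequality from \S2.2. Finally, your claim that ``the indices $q$ with $\kappa(q)=j$ form the interval $[m(j),m(j+1)-1]$'' fails when $\kappa$ skips the value $j$, and the reduction of the $q>m(j)$ generators to the $q=m(j)$ one needs the recursion in the form you state together with the extra generators $h_p(x_1,\ldots,x_k)$, $p>\ell-k$, which you never mention; these are precisely what close up the ideal at the top of the tower. So: right geometric idea, but the combinatorial endgame, which you flagged as ``bookkeeping,'' is where the proof currently breaks.
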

\begin{proof}
The space $Y_\kappa$ is a Schubert cell in a partial flag variety,
defined requiring ``non-crossing'' inclusions (since it involves no
conditions of the form $\C^k\subset V_p$, only of the opposite form);
in particular, this Schubert variety is smooth. 

From the main theorem of \cite{GR02}, the homology of this smooth
Schubert variety is generated by $x_{1},\dots, x_{k}$, only
subject to the following obvious relation:  since $
W_{\kappa(q)}\subset \C^{q-1}$, the Whitney sum formula implies that
\[h_p(x_1,\dots, x_{\kappa(q)})=(-1)^pc_p(\C^{q-1}/W_{\kappa(q)})=0
\text{ if } p>q-\kappa(q)-1.\]
Thus the relations of $I_\kappa$
follow and are the only relations.
\end{proof}

Let us abuse notation, and use $e_\kappa$ to denote $
\Delta_*[Y_\kappa]$; this acts on $H_*(Y)$ by projection to $H_*(Y_\kappa)$.

\begin{lemma}\label{conv}
  The homology clases act on
  $H_*(Y,\K)$ by 
  \begin{align}
    \label{conv1}
    (h_1\times h_2)_* [Z_{i,\kappa,\kappa}]\star f(x_1,\dots,x_k)&=\frac{f(x_1,\dots,x_k)-f(x_1,\dots,x_{i+1},x_i,\dots,
      x_k)}{x_{i+1}-x_{i}}\\    \label{conv2}
(h_1\times h_2)_*[Z_{0,\kappa^+_i,\kappa}]\star
f(x_1,\dots,x_k)&=f(x_1,\dots,x_k)\\     \label{conv3}
 (h_1\times h_2)_*[Z_{0,\kappa^-_i,\kappa}]\star f(x_1,\dots,x_k)&=-x_if(x_1,\dots,x_k)
  \end{align}
\end{lemma}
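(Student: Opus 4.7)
The plan is to derive each formula by unwinding the definition of convolution for a correspondence. For any $Z\subset Y\times_X Y$ with projections $h_1,h_2\colon Z\to Y$, the class $(h_1\times h_2)_*[Z]\in A^\ell_n$ acts on a class $f\in H^*(Y_\kappa)$ (identified with $H_*$ via Poincar\'e duality) by $f\mapsto (h_1)_* h_2^* f$; this follows from the fiber-square definition $\alpha\star \beta = (\pi_1)_*(\alpha\cdot \pi_2^*\beta)$ together with the projection formula, since intersection with the fundamental class of a smooth $Z$ is trivial. My remaining task is then to identify each $Z$ explicitly and evaluate this composition.

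For \eqref{conv2}, the condition $\kappa^+_i\geq \kappa$ pointwise makes the Schubert-type conditions defining $Y_{\kappa^+_i}$ strictly refine those of $Y_\kappa$, so $Y_{\kappa^+_i}\subset Y_\kappa$ as closed subvarieties of the full flag variety. The diagonal constraint $W_\bullet=W'_\bullet$ then identifies $Z_{0,\kappa^+_i,\kappa}$ with $Y_{\kappa^+_i}$, and $h_1$ is the identity while $h_2$ is the inclusion. The action is just the restriction $h_2^* f$, and since the tautological classes $x_j=c_1(W_j/W_{j-1})$ restrict to themselves, this sends a polynomial in the $x_j$'s to the same polynomial, now viewed modulo $I_{\kappa^+_i}$.

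For \eqref{conv3}, the inequality $\kappa^-_i\leq \kappa$ reverses this, giving $Y_\kappa\subset Y_{\kappa^-_i}$ with $Z_{0,\kappa^-_i,\kappa}\cong Y_\kappa$; now $h_1$ is the inclusion and $h_2$ is the identity. The action is the pushforward $(h_1)_*f$, which by the projection formula reduces to computing the fundamental class $[Y_\kappa]\in H^2(Y_{\kappa^-_i})$ of the divisor obtained by strengthening $W_{i-1}\subset \C^m$ to $W_i\subset \C^m$, where $m$ is the position at which $\kappa^-_i$ equals $i-1$ while $\kappa$ equals $i$. Realizing $Y_{\kappa^-_i}$ as a projective bundle in the $W_i$-factor (with tautological subbundle $W_i/W_{i-1}$), the divisor $Y_\kappa$ is the vanishing locus of the composite $W_i/W_{i-1}\hookrightarrow W_{i+1}/W_{i-1}\twoheadrightarrow (W_{i+1}/W_{i-1})/(\C^m\cap W_{i+1}/W_{i-1})$; a Chern-class computation using the tautological short exact sequence on this $\mathbb{P}^1$-bundle yields $[Y_\kappa]=-x_i$ modulo $I_{\kappa^-_i}$, giving the formula.

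For \eqref{conv1}, when $i\notin\operatorname{im}\kappa$ the forgetful map $\pi\colon Y_\kappa\to Y_\kappa^{(i)}$ which drops $W_i$ is a $\mathbb{P}^1$-bundle (its fibers parametrize lines in the rank-2 bundle $W_{i+1}/W_{i-1}$, with no extra incidence constraint since $i\notin\operatorname{im}\kappa$), and $Z_{i,\kappa,\kappa}$ is precisely the fiber product $Y_\kappa\times_{Y_\kappa^{(i)}}Y_\kappa$. The convolution operation is therefore $\pi^*\pi_*$, which by the classical BGG/Demazure computation in a $\mathbb{P}^1$-bundle with Chern roots $x_i,x_{i+1}$ equals the divided difference operator $f\mapsto (f-s_if)/(x_{i+1}-x_i)$. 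The main obstacle will be Case \eqref{conv3}, specifically pinning down the sign in $[Y_\kappa]=-x_i$: one has to track carefully how the normal bundle of $Y_\kappa\hookrightarrow Y_{\kappa^-_i}$ relates to the tautological line bundle $W_i/W_{i-1}$ to be sure the Chern-class computation produces $-x_i$ rather than $+x_i$; the other two cases amount to bookkeeping once the correspondences are correctly identified.
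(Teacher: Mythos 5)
Your proposal is correct and follows essentially the same route as the paper: interpret the correspondence class as $(h_1)_*h_2^*$, observe that $Z_{0,\kappa^+_i,\kappa}$ is a graph (so pullback), $Z_{0,\kappa^-_i,\kappa}$ is a graph the other way (so pushforward, with the divisor class computed as an Euler class), and $Z_{i,\kappa,\kappa}$ is the fiber product of a $\mathbb{P}^1$-bundle with itself (so $\pi^*\pi_*$ gives the Demazure operator, citing BGG). The only cosmetic difference is in \eqref{conv3}: where you realize $Y_{\kappa^-_i}$ as a projective bundle and run a Chern-class argument via a tautological sequence, the paper gets the sign more directly by observing that $Y_\kappa\subset Y_{\kappa^-_i}$ is the zero locus of a bundle map $W_i/W_{i-1}\to\C^{q}/\C^{q-1}$ with trivial target, so the normal bundle is $(W_i/W_{i-1})^*$ with Euler class $-x_i$ --- this sidesteps the sign-tracking concern you flagged at the end.
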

\begin{proof}
  The correspondence $Z_{i,\kappa,\kappa}$ is a $\mathbb{P}^1$ bundle
  under both projections given by base change of the space of pairs of
  flags that have relative position $\leq s_i$.  Thus, the formula \eqref{conv1}
  follows from \cite[5.7]{BGGschub}.  If we have functions $\kappa\leq
  \kappa'$, then $\kappa'$ imposes a strictly weaker condition on
  flags; thus the correspondence $Z_{0,\kappa^+_i,\kappa} $ projects
  isomorphically to the first factor and  $Z_{0,\kappa^-_i,\kappa} $
  to the second.  Thus, the first correspondence induces a pullback
  and the second a
  pushforward in Borel-Moore homology.  The formula \eqref{conv2}
  follows from the fact that pullback sends fundamental classes to
  fundamental classes and commutes with cap product.  The formula
  \eqref{conv3} follows from the adjunction formula: the space
  $Z_{0,\kappa^-_i,\kappa} $ inside $X_{\kappa^-_i}$ is the zeroset of
  the induced map $W_i/W_{i-1}\to \C^{q}/\C^{q-1}$, so it is given by
  the Euler class of the line bundle $(W_i/W_{i-1})^*$, which is
  $-x_i$.  
\end{proof}

\begin{proof}[Proof of Theorem \ref{main}]
  First, note that we have a map $T^\ell_n\to A^\ell_n$ defined by the
  equations given in Theorem \ref{main}.  Both these can be identified
  with their image in the polynomial representations by Lemmata
  \ref{T-faithful} and \ref{A-faithful}.  The polynomial
  representations can be matched by Lemma \ref{GR}, and this
  intertwines the actions by Lemma \ref{conv}.  This map is thus also
  injective.  We only need to prove surjectivity.  We can do this by
  putting an upper bound on the dimension of $A^\ell_n$.  We can
  filter the variety $Y_{\kappa_1}\times_XY_{\kappa_2}$ according the
  preimages of the Schubert cells in $X$.  The Schubert cell has a
  free action by a unipotent subgroup of $GL_\ell$ (depending on the
  cell), and is thus an affine bundle over a single fiber.  Each
  Schubert cell contains a unique $T$-fixed point (here, $T$ is the torus of
  diagonal matrices), which is a coordinate subspace, spanned by the
  $(j+\la_j)$th coordinate vectors for $j=1,\dots, k$.  If we consider
  the fiber over this point, then it inherits an action of $T$, and
  the fixed points are given by pairs of flags of coordinate spaces on
  this space, with compatibility conditions with the standard flag
  specified by $\kappa_1$ and $\kappa_2$.  These are actually in
  bijection with pairs of backdrops whose associated functions are
  $\kappa_1,\kappa_2$.  The flag is given by adding coordinate vectors
  corresponding to the rows by reading them in the order specified by
  the backdrop.   

Thus, the $T$-fixed points of $Y\times_XY$ are in bijection with pairs
of backdrops on the same Young diagram; this gives an upper bound on
the sum of the Betti numbers, that is on the dimension of $A^\ell_n$.
However, this is the dimension of $T^\ell_n$ as computed by the basis,
so the map $T^\ell_n\to A^\ell_n$ must be surjective.
\end{proof}

\subsection{Relationship to sheaves}
\label{sec:relationship-sheaves}

While this is not necessary for understanding the overall
construction, the discussion of convolution algebras would be
incomplete without covering one of the prime motivations for
introducing them: their connection to the category of sheaves.  As
shown in \cite[8.6.7]{CG97}, the convolution algebra $A^\ell_n$ can also be
interpreted as an Ext algebra in the category of constructible sheaves
(or equivalently, D-modules) on the Grassmannian itself.  More
precisely, if $\K_Y$ is the sheaf of locally constant $\K$-valued functions:
\begin{proposition}
  $\displaystyle A^\ell_n\cong\operatorname{Ext}^\bullet(p_*\K_Y,p_*\K_Y)$.  
\end{proposition}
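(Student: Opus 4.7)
The plan is to carry out the standard convolution-algebra-equals-Ext-algebra identification of Chriss and Ginzburg \cite[\S 8.6]{CG97}, which is applicable because $p\colon Y \to X$ is proper and $Y$ is smooth projective. The point is that the entire argument is an exercise in the six-functor formalism, and the statement here is essentially a direct invocation of their framework.

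First I would use the adjunction $(p_*, p^!)$ to rewrite
$$\RHom_X(p_*\mathbb{C}_Y, p_*\mathbb{C}_Y) \cong \RHom_Y(\mathbb{C}_Y, p^! p_* \mathbb{C}_Y),$$
and then apply proper base change to the Cartesian square
$$\begin{CD}
Y \times_X Y @>\pi_2>> Y \\
@V\pi_1VV @VVpV \\
Y @>>p> X
\end{CD}$$
(legitimate since $p$ is proper) to identify $p^! p_* \mathbb{C}_Y$ with $(\pi_1)_* \pi_2^! \mathbb{C}_Y$. Since $Y$ is smooth, $\pi_2^! \mathbb{C}_Y$ is the Verdier dualizing complex $\omega_{Y\times_X Y}$ shifted by $-2\dim Y$. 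Taking hypercohomology then produces the Borel--Moore homology $H^{BM}_\bullet(Y \times_X Y)$, which for the projective variety $Y \times_X Y$ coincides with ordinary homology up to reindexing. This furnishes the desired isomorphism of graded vector spaces.

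The remaining task is to match the multiplications: Yoneda composition of $\Ext$ classes versus the convolution product of homology classes defined in Section \ref{sec:definitions}. This is verified by running the same six-functor manipulations for the triple fiber product $Y \times_X Y \times_X Y$ and checking that the pull--intersect--push recipe for $a\star b$ is exactly what the composition law in $D^b(X)$ spits out once the identifications above are installed.

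The main obstacle, as is usual with six-functor arguments, is bookkeeping: tracking the cohomological shifts and the appearance of the dualizing complex precisely, and verifying that the two a priori distinct product laws really coincide rather than differing by a sign or a shift. Both points are standard and worked out in detail in \cite[8.6.7]{CG97}, of which our situation is an immediate special case.
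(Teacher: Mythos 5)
Your proposal is correct and takes essentially the same approach as the paper: the paper's proof consists of directly citing \cite[8.6.7]{CG97}, and your argument simply unpacks the six-functor manipulations underlying that result.
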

This Ext algebra completely controls the category of sheaves generated
by $p_*\K_Y$; there is a quasiequivalence of dg-categories between
the dg-modules over $A^\ell_n$ and the dg-category of sheaves
generated by $p_*\K_Y$.

Let us assume that $\K=\C$ (or more generally any field of
characteristic 0).
By the Decomposition Theorem of Beilinson, Bernstein, Deligne and
Gabber, the sheaf $p_*\C_Y$ is a sum of shifts
of simple perverse sheaves (see \cite{WhatisPS} for an introductory
discussion of this theory).  Replacing this sum with one copy of each
simple perverse constituent, we obtain an object $\mathcal{G}$ with
the property that $\mathsf{A}^\ell_n:=\Ext^\bullet(\mathcal{G},\mathcal{G})$ is a
positively graded algebra with its degree 0 part commutative and
semi-simple.  The algebras $A^\ell_n$ and $\mathsf{A}^\ell_n$ are
Morita equivalent since
they are Ext-algebras of objects with the same indecomposable
constituents.  

\begin{proposition}
The category of regular holonomic D-modules/perverse sheaves on the Grassmannian
$\Gr(k,\ell)$ which are smooth along the Schubert stratification is equivalent to the category of representations of the
Koszul dual of $\mathsf{A}^\ell_{\ell-2k}$ (the abelian category of
linear complexes of projectives $\mathsf{A}^\ell_{\ell-2k}$-modules).
\end{proposition}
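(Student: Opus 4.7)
The plan is to invoke the Koszul duality framework of Beilinson--Ginzburg--Soergel directly, reducing the proposition to (a) Koszulity of $\mathsf{A}^\ell_{\ell-2k}$ and (b) an identification of the module category over its Koszul dual with Schubert-constructible perverse sheaves.

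First I would verify that $\mathsf{A}^\ell_{\ell-2k}$ is Koszul in the sense of \cite{BGS96}. By construction it is positively graded with commutative semisimple degree-zero part, so Koszulity reduces to parity vanishing for the graded $\Ext$ spaces between its simple modules. Through the Morita-type identification $\mathsf{A}^\ell_{\ell-2k}\cong\Ext^\bullet(\mathcal{G},\mathcal{G})$ from the preceding proposition, this amounts to parity vanishing for the stalks of IC sheaves on Schubert cells of $\Gr(k,\ell)$, which is classical for type A partial flag varieties (the relevant Kazhdan--Lusztig polynomials have only even-degree terms, equivalently all the Schubert cells have only even-dimensional cohomology on each stratum). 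This is exactly the criterion of \cite[3.4.2]{BGS96}.

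Granted Koszulity, the proposition will follow from the abelian-level Koszul duality theorem \cite[\S 2.12]{BGS96}: for a Koszul algebra $A$ with positive grading and semisimple degree-zero part, the abelian category of finitely generated graded modules over the Koszul dual $A^!$ is canonically equivalent to the abelian category of linear complexes of finitely generated projective graded $A$-modules. Applied with $A=\mathsf{A}^\ell_{\ell-2k}$, the right-hand side is by definition the category appearing in the statement, so it suffices to match $A^!\mmod$ with Schubert-constructible perverse sheaves. That category is a finite-length abelian category with enough projectives, hence equivalent to $B\mmod$ for $B=\End(\mathcal{P})^{\mathrm{op}}$ with $\mathcal{P}$ a minimal projective generator, and what needs to be checked is $B\simeq A^!$ up to Morita equivalence.

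The hard part will be this last identification of $B$ with the Koszul dual of $\mathsf{A}^\ell_{\ell-2k}$; everything else is bookkeeping once the BGS machinery is set up. For the full flag variety this is \cite{BGS96} itself, and for the Grassmannian it can be obtained either by passing through the parabolic quotient from the full flag variety, or by identifying $B$ with the corresponding maximal parabolic block of category $\cO$ for $\mathfrak{gl}_\ell$ and applying the Koszul self-duality of category $\cO$ from \cite{BGS96}; the $\mathfrak{sl}_2$ calculation of Section \ref{sec:an-example}, where $B$ literally becomes Soergel's algebra governing the principal block for $\mathfrak{sl}_2$, is the smallest instance of this.
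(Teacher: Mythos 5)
Your proposal is essentially the route the paper has in mind: the paper itself does not spell out a proof, but the surrounding discussion (citing \cite{MOS} for the linear-complexes formulation of Koszul duality and \cite[3.5.1]{BGS96} for the identification of Schubert-smooth perverse sheaves with parabolic category $\cO$) points to exactly the BGS machinery you invoke. Two small corrections to the bookkeeping. First, \cite[3.4.2]{BGS96} is the faithfulness-of-hypercohomology statement (which this paper cites for that purpose in Lemma \ref{A-faithful}); the parity-vanishing-implies-Koszulity argument for the Schubert-stratified $G/P$ lives in BGS \S 3.5, so you want to cite those results for the Koszulity of the perverse-sheaf category. Second, the step you flag as the ``hard part'' --- identifying $B = \End(\mathcal{P})^{\mathrm{op}}$ with $\mathsf{A}^!$ --- does not really require passing through category $\cO$ and its Koszul self-duality. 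Since $\mathcal{G}$ is by construction the direct sum of the simple objects of the perverse category and $\mathsf{A}^\ell_{\ell-2k}=\Ext^\bullet(\mathcal{G},\mathcal{G})$ is literally the Yoneda algebra of the simples, Koszulity of $B$ (equivalently of the category) already forces $B^! \cong \mathsf{A}^\ell_{\ell-2k}$, and then $B\cong \mathsf{A}^!$ by the double-dual theorem for finite-dimensional Koszul algebras. The detour through parabolic-singular duality for category $\cO$ is the interpretation the paper offers afterwards, and it is consistent, but it is not needed to close the proof; treating it as essential makes the argument look circular, since parabolic-singular duality is itself proved in BGS via the same parity-vanishing input.
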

\begin{proof}
Since the map from $Y\to \Gr(k,\ell)$ is equivariant for the subgroup
preserving the standard flag, every summand of  $\mathcal{G}$ is
smooth along the Schubert stratification. Since for every Schubert
cell, there's a $\kappa$ such that the Schubert cell is precisely the image of
$Y_\kappa$,  the IC sheaf of the Schubert cell is a summand of
$p_*\K_Y$ and thus of $\mathcal{G}$.

Thus, we have that Ext algebra of the sum of simple objects in this
category is $\mathsf{A}^\ell_{\ell-2k}$.
Since the category of perverse sheaves on the Grassmannian is Koszul,
it follows that $\mathsf{A}^\ell_{\ell-2k}$ is its Koszul dual.
\end{proof}
For a thorough discussion of Koszul duality, its relationship to
linear complexes, etc. see \cite{MOS}.  This result is particularly
interesting in view of the fact that this category already has an
algebraic description related to Khovanov homology, via work of
Khovanov on the arc algebra \cite{Kho02}. The category of
Schubert smooth perverse sheaves/D-modules on the Grassmannian is
equivalent to the parabolic category $\mathcal{O}$ for the
corresponding maximal parabolic by \cite[3.5.1]{BGS96} (interestingly,
this equivalence is {\it not}  simply taking sections of the D-module;
see \cite{WebWO} for a more detailed discussion).  
Of course,
those familiar with parabolic-singular duality for category $\cO$ (as
proven in \cite{BGS96}) will recognize that this implies that the
category of $A^\ell_n$-modules is equivalent to a certain block of
category $\cO$ of $\mathfrak{gl}_\ell$.  This is proven in \cite[\S
9]{Webmerged} by other methods (since the one used here is much harder to
generalize past $\mathfrak{sl}_2$), but this will perhaps not be too
meaningful to topologists.

However, this parabolic category $\cO$ (denoted $\mathcal{O}^{\ell-k,k}$ in \cite{StDuke}) played an
important role in the original definition of Khovanov's arc algebra  \cite{Kho02}.  The most
important case for understanding invariants is the $n=0$ weight space,
i.e. when $\ell=2k$; in this case, Stroppel
defined an extension $\mathcal{K}^k$ of Khovanov's arc algebra
\cite[\S 5.4]{Str09} which  has representation
category equivalent to $\mathcal{O}^{k,k}$ by \cite[5.8.1]{Str09}, and is thus Koszul dual to
$\mathsf{A}^\ell_{0}$.  That is:
\begin{theorem}\label{th:arc-Koszul}
  The algebra $A^{2k}_0$ is Morita equivalent to the Koszul dual of
  Stroppel's extended arc algebra $\mathcal{K}^k$.
\end{theorem}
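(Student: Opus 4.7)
The plan is to assemble the statement by chaining together the equivalences already in place in this section together with the results of Beilinson--Ginzburg--Soergel and Stroppel.  First, I would replace $A^{2k}_0$ by the Morita equivalent algebra $\mathsf{A}^{2k}_0$ which is the Ext algebra of the object $\mathcal{G}$ obtained by collapsing the decomposition of $p_*\C_Y$ to one summand per simple constituent; this reduction is exactly the point of introducing $\mathsf{A}^\ell_n$ in Section \ref{sec:relationship-sheaves}.  The key virtue of $\mathsf{A}^{2k}_0$ is that it is positively graded with semisimple degree $0$ part, so Koszul duality is an honest involution on its Morita equivalence class.

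Next I would apply the Proposition preceding the theorem, which identifies the category of representations of the Koszul dual $(\mathsf{A}^{2k}_0)^!$ with the category of Schubert-smooth perverse sheaves on $\Gr(k,2k)$.  Applying the BGS equivalence \cite[3.5.1]{BGS96}, this category is equivalent to the parabolic block $\mathcal{O}^{k,k}$ for the maximal parabolic of $\mathfrak{gl}_{2k}$ of type $(k,k)$.  Finally, the main theorem of \cite{StDuke} gives an equivalence of abelian categories $\mathcal{O}^{k,k}\simeq \mathcal{K}^k\mmod$.  Composing these three equivalences yields
\[
(\mathsf{A}^{2k}_0)^!\mmod\;\simeq\;\mathcal{K}^k\mmod,
\]
which by definition of Morita equivalence means $(\mathsf{A}^{2k}_0)^!$ and $\mathcal{K}^k$ are Morita equivalent.

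To finish, I would take the Koszul dual of both sides.  Since both algebras (once replaced by their basic versions) are Koszul with positively graded semisimple degree $0$ part, and since Koszul duality is an involution on such algebras up to Morita equivalence (see \cite{MOS}), this yields that $\mathsf{A}^{2k}_0$ is Morita equivalent to $(\mathcal{K}^k)^!$, and hence so is $A^{2k}_0$ by the first step.

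The main obstacle to executing this plan cleanly is not any single step but the bookkeeping of gradings: one must check that Stroppel's equivalence $\mathcal{O}^{k,k}\simeq \mathcal{K}^k\mmod$ sends the Koszul grading on $\mathcal{O}^{k,k}$ coming from the geometric side to the naive grading on $\mathcal{K}^k$ (so that ``Koszul dual'' makes sense unambiguously on the arc algebra side), and that the identification in \cite[3.5.1]{BGS96} of Schubert-smooth perverse sheaves with $\mathcal{O}^{k,k}$ is graded.  Both facts are in the literature; once they are invoked, the theorem is a direct consequence of the already established Morita equivalence $A^{2k}_0 \sim \mathsf{A}^{2k}_0$ and the involutivity of Koszul duality.
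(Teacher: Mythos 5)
Your argument is correct and is in fact exactly the chain the paper is implicitly invoking: $A^{2k}_0$ is Morita equivalent to the basic algebra $\mathsf{A}^{2k}_0$, whose Koszul dual controls Schubert-smooth perverse sheaves on $\Gr(k,2k)$ by the preceding Proposition, which in turn is $\mathcal{O}^{k,k}$ by \cite[3.5.1]{BGS96}, which is $\mathcal{K}^k\mmod$ by Stroppel; then apply involutivity of Koszul duality. Your closing remark about verifying that the BGS and Stroppel equivalences respect the relevant Koszul gradings is a worthwhile point of care that the paper leaves tacit.
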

A similar theorem holds for other weight spaces, using further
generalizations of the arc algebra given in \cite{BS1,ChK}.

\section{Khovanov homology}
\label{sec:khovanov-homology}

In order to define a knot homology, we need to define functors between
the categories of modules over $T^\ell$ for different choices of
$\ell$ corresponding to tangles.    These are defined explicitly using
bimodules over the algebras $T^\ell$.  

\subsection{Braiding}
\label{sec:braiding}
{\small \it 
\begin{quotation}
Spengler: [hesitates] We'll cross the streams.\\
Venkman: Excuse me, Egon? You said crossing the streams was bad!\\
\mbox{}[$\dots$]\\
Spengler: Not necessarily. There's definitely a {\em very slim}
chance we'll survive.\\

\hfill --Ghostbusters (1984)
\end{quotation}
}

The braiding bimodules are based on a simple principle used very
successfully in the movie ``Ghostbusters:'' even if you were told
not to do so earlier, you should ``cross the streams.''  
\begin{definition}
  A ${s_i}$-Stendhal diagram is collection of oriented curves which is
  a Stendhal diagram except that there is a single crossing between
  the $i$ and $i+1$st red strands.

  Let $\bra_i$ be the $T^\ell-T^\ell$-bimodule given by the quotient
  of the formal span of ${s_i}$-Stendhal diagrams by the same local
  relations (\ref{first-nilhecke}--\ref{cost}) as well as relations
  below (and their mirror images) \newseq
  \begin{equation*}\label{side-dumb}\subeqn
    \begin{tikzpicture}
      [very thick,scale=1,baseline] \usetikzlibrary{decorations.pathreplacing}
      \draw[wei] (1,-1) -- (-1,1) 
     node[midway,fill=white,circle]{};
      \draw[wei] (-1,-1) -- (1,1) ; \draw (0,-1)
      to[out=135,in=-135] (0,1); \node at (3,0){=}; \draw[wei] (7,-1)
      -- (5,1) node[midway,fill=white,circle]{}; \draw[wei] (5,-1)
      -- (7,1); \draw (6,-1) to[out=45,in=-45] (6,1);
    \end{tikzpicture}
  \end{equation*}
  \begin{equation*}\label{top-dumb}\subeqn
    \begin{tikzpicture}
      [very thick,scale=1,baseline] \usetikzlibrary{decorations.pathreplacing}
      \draw[wei] (1,-1) -- (-1,1) node[midway,fill=white,circle]{};
      \draw[wei] (-1,-1) -- (1,1); \draw (1.8,-1)
      to[out=145,in=-20] (-1,-.2) to[out=160,in=-80] (-1.8,1); \node
      at (3,0){=}; \draw[wei] (7,-1) -- (5,1) 
      node[midway,fill=white,circle]{}; \draw[wei] (5,-1) -- (7,1); \draw
      (7.8,-1) to[out=100,in=-20] (7,.2) to[out=160,in=-35] (4.2,1);
    \end{tikzpicture}
  \end{equation*}
This module is graded by giving each diagram a degree, which is the
sum of the number of red/black crossings plus twice the number of
dots, minus twice the number of black/black crossings\footnote{This is
slightly different from the grading convention in \cite{Webmerged};
since we'll be avoiding the discussion of ribbon structures, this
makes more sense for us.}. 
The bimodule action is by composition of diagrams, using the same
conventions as usual Stendhal
diagrams: the left action is by stacking diagrams from $T^\ell$ on top
of those from $\bra_i$, and the right action by placing them on the bottom.
\end{definition}
One example of such a diagram is   \begin{equation*}
    \begin{tikzpicture}
      [very thick,scale=1.4]
      \usetikzlibrary{decorations.pathreplacing} \draw[wei] (-3.5,-1)
      -- +(0,2); \draw (-2.75,-1) -- +(0,2); \node at
      (-2,0){$\cdots$}; \node at (2,0){$\cdots$}; \draw[wei] (3.5,-1)
      -- +(0,2); \draw (2.75,-1) -- +(0,2); \draw[wei]
      (1.2,-1) -- (-1,1) node[pos=.55,fill=white,circle]{}; \draw[wei]
      (-1.2,-1) -- (1,1); \draw (.5,-1) to[in=-90, out=90]
      (1.5,1); \draw (.8,-1) to[in=-90, out=90] (1.8,1); \draw
      (-.5,-1) to[in=-90, out=100] (-1.5,1); \draw (-.8,-1) to[in=-90,
      out=100] (-1.8,1); \node at (0,.7){$\cdots$}; \node at
      (0,-.7){$\cdots$};
    \end{tikzpicture}
  \end{equation*}

More generally, one can fix a permutation for the red lines to carry
out; the resulting bimodule $\bra_\sigma$ in this case will be the corresponding
tensor product of $\bra_i$'s for a reduced expression of the
permutation.  By \cite[6.5]{Webmerged}, the result is independent of
the choice of reduced expression.

The bimodule $\bra_\sigma$  has some beautiful properties:
\begin{itemize}
\item It has a cellular basis much like that of the algebra, indexed
  by pairs of backdrops on possibly different Young diagrams, defined
  in \cite[\S 3.4 \& 4.5]{WebRou}. We can
  define an action of $S_\ell$ on the Young diagrams in a $k\times
  (\ell-k)$ box via the rule $s_{\sigma\cdot \la}=\sigma\cdot s_\la$.
  That is, permutations act by reordering the line segments in the
  boundary of the permutation inside a box.  A simple permutation
  $s_i$ will add or remove a box if it switches a vertical and a
  horizontal segment, and leave the permutation unchanged if it
  switches two vertical segments or two horizontal ones.

The basis of $\bra_\sigma$ will be indexed by a pair of a backdrops
$\sT$ on
a partition $\la$, and $\sS$ on $\sigma\cdot \lambda$.  We define a $\sigma$-Stendhal diagram 
diagram $D_\sS$ where the top is the same as $B_\sS$, but the bottom
is given by $\kappa(\la)$ instead of $\kappa(\sigma\cdot \lambda)$.
The black strands at the bottom correspond to the parts of $\la$,
which are identified with the parts of $\sigma\cdot \la$ using the induced
permutation on vertical segments in the boundary, and thus to the
black strands at the top of the diagram.  As in $B_\sS$, we connect
black strands at the top and bottom which correspond to the same part
with a minimal number of crossings.

The desired bases are given by $D_\sS B_\sT^*$ (which gives a standard
filtration as a right module by \cite[4.14]{WebRou}) or its mirror
image (which gives a standard
filtration as a left module).
\item In particular, as both a left and as a right module, $\bra_\sigma$ has a
  filtration whose successive quotients are standard modules.
\item This bimodule has a geometric incarnation.  We constructed the
  varieties $Y$ using a chosen standard flag; let $Y'$ be the same
  variety, but defined using a different flag $U_\bullet$ such that
  $U_i$ is the span of the unit vectors $e_{\sigma(1)},\dots, e_{\sigma(i)}$.  In this
  case, we can canonically identify $H_*(Y'\times_XY')\cong T^\ell$,
  so $H^*(Y\times_XY')$ is a natural bimodule over $ T^\ell$; under
  the isomorphism of Theorem \ref{main}, this bimodule is isomorphic to $\bra_\sigma$ by \cite[4.12]{WebwKLR}. 
\end{itemize}

Given a bimodule $B$ over an algebra $A$ of finite global dimension, one can construct a functor
$A\gmod\to A\gmod$
from $B$ in two different ways.  You can consider the tensor product
$B\otimes_A-$, and the space $\Hom_{A}(B,-)$ of left module homomorphisms, which form an
adjoint pair; note that the {\it right} module structure on $B$ induces a {\it
  left} module structure on  $\Hom_{A}(B,-)$. The same is true of the derived functors on the
bounded derived category $D^b(A\gmod)$ of $A$-modules; we use
$\Lotimes$ to denote the left derived tensor product and $\RHom$ to
denote right derived homomorphisms:
\[B\Lotimes -\colon D^b(A\gmod) \to D^b(A\gmod)\qquad \RHom(B, -)\colon D^b(A\gmod) \to D^b(A\gmod).\]
If either one of these functors is an equivalence, the other one is
its inverse (up to isomorphism of functors).
Let $\mathbb{B}_i=\bra_i\Lotimes-$, and let $\sigma_i$ be the braid
making a positive crossing between the $i$ and $i+1$st strands of the
braid, as shown below:
\begin{equation}
  \tikz[baseline, very thick,scale=.5]{
\draw (-3.5,-1)
      -- +(0,2); \node at
      (-2,0){$\cdots$}; \node at (2,0){$\cdots$}; \draw (3.5,-1)
      -- +(0,2); \draw
      (1,-1) -- (-1,1) node[pos=.5,fill=white,circle]{}; \draw
      (-1,-1) -- (1,1); 
\node at (0,-2.2){$\sigma_i$};}
\qquad \qquad \tikz[baseline, very thick,scale=.5]{
\draw (-3.5,-1)
      -- +(0,2); \node at
      (-2,0){$\cdots$}; \node at (2,0){$\cdots$}; \draw (3.5,-1)
      -- +(0,2); \draw
      (-1,-1) -- node[pos=.5,fill=white,circle]{} (1,1); \draw
      (1,-1) -- (-1,1); 
\node at (0,-2.2){$\sigma_i^{-1}$};}\label{eq:sigma}
\end{equation}

\begin{theorem}[\mbox{\cite[6.18]{Webmerged}}]
  The assignment of the functors $\mathbb{B}_i$ to the braids
  $\sigma_i$ defines a strong action of the braid
  group on $\ell$ strands on the derived category $D^b(T^\ell\gmod)$.
\end{theorem}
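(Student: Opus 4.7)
The plan is to verify the four ingredients needed for a strong braid-group action on $D^b(T^\ell\mmod)$: each $\mathbb{B}_i$ is an autoequivalence; far-commutativity $\mathbb{B}_i\mathbb{B}_j \cong \mathbb{B}_j\mathbb{B}_i$ for $|i-j|\geq 2$; the type-$A$ braid relation $\mathbb{B}_i\mathbb{B}_{i+1}\mathbb{B}_i \cong \mathbb{B}_{i+1}\mathbb{B}_i\mathbb{B}_{i+1}$; and the coherence of these isomorphisms needed to lift a merely weak action to a strong one. The central tool is the family $\bra_\sigma$ for $\sigma\in S_\ell$, which has both a diagrammatic definition via a chosen reduced expression and an intrinsic geometric definition as $H^*(Y\times_X Y')$ where the defining flags of $Y$ and $Y'$ are in relative position $\sigma$.

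First I would establish multiplicativity: whenever $\sigma = \sigma_1\sigma_2$ with $\ell(\sigma) = \ell(\sigma_1) + \ell(\sigma_2)$,
\[ \bra_\sigma \;\cong\; \bra_{\sigma_1} \Lotimes_{T^\ell} \bra_{\sigma_2}. \]
Two steps suffice. First, show that each $\bra_i$ (and hence $\bra_\sigma$) is projective as a right $T^\ell$-module using the cellular filtration by standards listed among the bullet-point properties of $\bra_i$ above; this reduces $\Lotimes$ to $\otimes$ on the right hand side. Second, exhibit a bijection between the cellular bases of $\bra_\sigma$ and $\bra_{\sigma_1}\otimes_{T^\ell}\bra_{\sigma_2}$ by showing that every $s_\sigma$-Stendhal diagram factors uniquely, modulo the local relations, by slicing horizontally at a height separating the two groups of red crossings; geometrically this is the transversality of the convolution fiber product for length-additive factorizations. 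Granted this, both the far-commutativity and braid relations become tautological, since both sides compute the same $\bra_\sigma$.

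For invertibility of $\mathbb{B}_i$, I would introduce the bimodule $\bra_i^{-1}$ of $s_i$-Stendhal diagrams with the opposite crossing orientation and exhibit an explicit quasi-isomorphism $\bra_i\Lotimes_{T^\ell}\bra_i^{-1}\simeq T^\ell$. The required chain map is forced by the Reidemeister-II-type nullhomotopy between a crossing and its opposite dictated by the relations (\ref{side-dumb})--(\ref{top-dumb}); geometrically, it reflects the degeneration of $(Y\times_X Y')\times_{Y'}(Y'\times_X Y)$ onto the diagonal in $Y\times_X Y$.

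The main obstacle is the coherence required for a strong (as opposed to weak) action: all competing reduced-word isomorphisms must commute so that $\sigma\mapsto \mathbb{B}_\sigma$ factors as a $2$-functor from the braid $2$-category, not merely as a map of groups up to isomorphism. The clean route through this obstacle is the intrinsic geometric definition of $\bra_\sigma$: because $\bra_\sigma$ depends only on $\sigma$ and not on a chosen word, the comparison isomorphisms of the multiplicativity step are canonical and natural in both factors, so the hexagon and higher coherence diagrams collapse to equalities of permutations in $S_\ell$. The Tor-vanishing guaranteed by projectivity ensures these coherences hold at the chain level rather than only up to higher homotopy, which is precisely what upgrades the resulting action to a strong one.
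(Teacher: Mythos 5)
The paper does not prove this theorem; it is cited directly from \cite[5.17]{Webmerged}, so there is no in-paper argument to compare against. Evaluating your proposal on its own terms, the overall architecture (multiplicativity of $\bra_\sigma$, invertibility via the opposite crossing, coherence via the intrinsic geometric description) is the right shape, but the key technical reduction contains a genuine error.

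You claim that because $\bra_i$ has a filtration by standard modules, it is \emph{projective} as a right $T^\ell$-module, and that this collapses $\Lotimes$ to $\otimes$. This is false. Standard (cell) modules over a quasi-hereditary algebra are quotients of projectives, not projectives, and a $\Delta$-filtration does not imply projectivity. Indeed, if $\bra_i$ were right-projective then $\mathbb{B}_i = \bra_i\Lotimes_{T^\ell}-$ would be an exact autoequivalence of the abelian category $T^\ell\mmod$, hence given by an algebra automorphism of $T^\ell$; this is incompatible with the known behavior of the braiding functors, which genuinely shift homological degree (compare the twisting/shuffling functor interpretation stated just below the theorem, or the fact that the cone of $\psi\colon\bra_i\to T^\ell$ in Section 4.4 is a two-term complex not concentrated in a single degree). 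What is actually true, and what the argument in \cite{Webmerged} relies on, is a Tor-vanishing statement for the relevant bimodules with standard filtrations on both sides: the higher Tor of a right $\Delta$-filtered module against a left $\Delta$-filtered one vanishes in this setting because of the particular structure of the standard filtration coming from the cellular basis (or, geometrically, because the convolution in the length-additive case is computed by a transversal fiber product, so the derived tensor product has no higher terms). You need to prove that Tor-vanishing directly; projectivity is the wrong route, and the correct one requires more work than you have indicated.

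Two smaller remarks. First, the ``Reidemeister-II nullhomotopy'' you invoke for invertibility needs to be produced explicitly; the relations \eqref{side-dumb}--\eqref{top-dumb} only involve red/red crossings and a black strand passing through, and nothing in the bimodule presentation obviously simplifies $\bra_i\Lotimes\bra_i^{-1}$. One typically constructs a short exact sequence of bimodules $0\to \bra_i\otimes\bra_i^{-1}\to T^\ell\to Q\to 0$ (or a cone computation) and shows $Q$ is acyclic after the derived tensor. Second, the coherence argument is plausible but currently heuristic: the statement that ``the comparison isomorphisms are canonical'' needs to be backed by an explicit associativity isomorphism for convolution (or, algebraically, by the observation that the multiplicativity isomorphisms are induced by the obvious multiplication maps of Stendhal diagrams and hence automatically satisfy the pentagon/hexagon identities). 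That step is doable, but as written it asserts the conclusion rather than proving it.
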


This braid action is closely related to Khovanov
homology; there is a functor-valued invariant of tangles\footnote{Not
  quite the same as Khovanov's ``functor-valued invariant of
  tangles'' \cite{Kho02}.} which gives this action on braids and
Khovanov homology on links.  For lovers of category $\cO$, we can identify this with
natural representation-theoretic functors: if we identify $A^\ell_n\mmod$ with a block
of category $\cO$ which is ``submaximally singular'' then they match
with twisting functors and if we use the Koszul dual identification
with a regular block of parabolic category $\cO$, they match with
shuffling functors (this is proven in \cite[Th. C]{Webqui}).

\subsection{Cups and caps: $\ell=2$}

{\small \it 
\begin{quotation}
We are cups, constantly and quietly being filled. The trick is,
knowing how to tip ourselves over and let the beautiful stuff out.

\hfill --Ray Bradbury (1990)
\end{quotation}
}

In order to construct knot invariants, we need not just a braid group
action, but also a way of closing up our braids.  This is achieved by
defining functors corresponding to cups and caps.  Just as with the
braiding, these are fairly simple minded functors easily guessed by
drawing the appropriate pictures.  

As preparation, let's consider the case of a cup going from 0 strands
to 2.  In this case, we'll simply want a left module over $T^2_0$ which
categorifies the invariant vector in $\C^2\otimes \C^2$.  We'll use
the same notation here as in Section \ref{sec:an-example}.   Since the
functors $\fE$ and $\fF$ are exact, a module is killed by both of them
if and only if the same holds for all its composition factors.  

The algebra $T^2_0$ is 5 dimensional, and has 2 simple modules.  Since
the algebra is not semi-simple, this 
is only possible if both simples are one dimensional. Let $L_0$ be the simple quotient of $P_{(0,0)}$;
the idempotent $e_{(0,0)}$
acts by the identity on $L_0$.  Let $L_1$ be the simple quotient of
$P_{(0,1)}$; the idempotent $e_{(0,1)}$ acts by the identity on this
module.  To remind us of the relations we have imposed, we draw the
image of $e_{(0,1)}$ with a cup at the bottom as below:
\begin{equation*}
    \begin{tikzpicture}[very thick]
      \draw[wei] (0,1) to [out=-90,in=180] (.5,.2) to[out=0,in=-90]
      (1,1); \draw (.5,.2) to[out=90,in=-90] (.6,1); \node at
      (1.5,.6) {$=0$};
    \end{tikzpicture} 
  \end{equation*}
The relations imposed by killing the maximal submodule of $P_{(0,1)}$
are generated by:
\begin{equation*}
    \begin{tikzpicture}[very thick]
      \draw[wei] (0,1) to [out=-90,in=180] (.5,.2) to[out=0,in=-90]
      (1,1); \draw (.5,.2) to[out=90,in=-90] node
      [pos=.4,fill=black,circle,inner sep=1.5pt]{} (.6,1); \node at
      (1.5,.6) {$=0$};
    \end{tikzpicture} \qquad
    \begin{tikzpicture}[very thick]
      \draw[wei] (0,1) to [out=-90,in=180] (.5,.2) to[out=0,in=-90]
      (1,1); \draw (.5,.2) to[out=90,in=-90] (-.2,1); \node at
      (1.5,.6) {$=0$};
    \end{tikzpicture}
    \qquad
    \begin{tikzpicture}[very thick]
      \draw[wei] (0,1) to [out=-90,in=180] (.5,.2) to[out=0,in=-90]
      (1,1); \draw (.5,.2) to[out=90,in=-90] (1.2,1); \node at
      (1.5,.6) {$=0$};
    \end{tikzpicture}
  \end{equation*}

For any $T^2_0$-module, we have that $\fE M=e_{(0,0)}M$ by definition,
so $\fE L_0\cong \K$ and $\fE L_1\cong 0$.
An explicit calculation shows that $\fF L_0$ is a simple
module over $T^2_{-2}$ and $\fF L_1\cong 0$; this also follows because
we have an
isomorphism $\fF^{(2)}\fE M\cong \fE\fF^{(2)}M \oplus \fF M$ for any
object of weight 0 in a categorical $\mathfrak{sl}_2$ representation.  Thus, $L_1$ is the
desired ``invariant representation.'' 
\begin{proposition}
  The class $[L_1]$ in the Grothendieck group spans the
  space of $\mathfrak{sl}_2$-invariants
  $\bigwedge\nolimits^{\! 2}\C^2\subset \C^2\otimes \C^2$.
\end{proposition}
Thus, we'll want to think of the functor sending a $\K$-vector space $V$ to
$V\otimes_\K L_1$ as the cup functor going from 0 strands to 2.  

In this section, we will leave a number of statements for the reader
to verify; these results are all special cases of the results of
\cite{Webweb}.

There
are two obvious possibilities for the cap functor going from 2 strands
to 0, given by the right and left adjoints of the cup functor; the
right adjoint is $\RHom(L_1,-)$, and the left is $\dot{L}_1\Lotimes
-$, where $\dot{L}_1$ refers to the right module
obtained by letting $T^{\ell}$ act on $L_1$ via the mirror image
(through the $x$-axis) of diagrams (since $V\otimes_\K L_1\cong
\Hom_\K(\dot{L}_1,V)$).   These functors do not coincide, but they do
up to shift. 
Let $\langle n\rangle$ be the ``Tate twist'' which decreases the
internal grading of a module by $n$, and increases its homological
grading by $n$, that is $\langle n\rangle=\{n\}[-n]$.  
We will see below that  $\RHom(L_1,-)\langle -1\rangle \cong
\dot{L}_1\Lotimes -\langle 1\rangle $, and we will let this functor
correspond to the cap.   This is a special case of a more general
duality result \cite[3.17]{Webweb}.
 The isomorphism above, and many other special properties of
these cup and cap functors come from the special structure of a
projective resolution of $L_1$. Recall that in Section
\ref{sec:an-example}, we defined $\psi\in
e_{(0,1)}T^2e_{(0,0)}$ and $\phi\in e_{(0,0)}T^2e_{(0,1)}$ to be the unique
basis vectors in these spaces.   
\begin{proposition}
  The minimal projective resolutions of the simples $L_0,L_1$ are given by
  \[P_{(0,1)}\{-1 \}\overset{\psi}\to P_{(0,0)}\to L_0\qquad
  P_{(0,1)}\{ -2 \}\overset{\psi}\to P_{(0,0)} \{-1 \}\overset{\phi}\to
  P_{(0,1)}\to L_1\] where we use $\psi,\phi$ to indicate right
  multiplication by these elements.
\end{proposition}

In order to understand how the functors $\RHom(L_1,-)$ and
$\dot{L}_1\Lotimes-$ are related, we can try applying them to projectives.
Applying a right exact functor to a projective just gives a vector
space in homological degree $0$: thus, the projective $\dot{L}_1\Lotimes P_{(0,0)}$ is sent to 0, and
$\dot{L}_1\Lotimes P_{(0,1)}\cong \K$.  On the other hand, $\RHom(L_1,-)$ is left exact,
so we require the full projective resolution.  The result for any
module $P$ is the complex \[\RHom(L_1,P) = e_{(0,1)}P \overset{\phi}\to e_{(0,0)}P \{1\}\overset{\psi}\to e_{(0,1)}P\{ 2 \}\]
where the leftmost term is of homological degree 0 (so the rightmost
is of
homological degree 2) and now we are using the maps of left
multiplication by $\psi$ and $\phi$.  This
sends $P_{(0,0)}$ to 0 and $P_{(0,1)}$ to $\K\langle 2\rangle $.  We
want to emphasize that there is a symmetry being used here:  for
example \[\RHom(L_0,P) = e_{(0,0)}P \overset{\phi}\to e_{(0,1)}P\{ 1\} .\]
 
Phrased differently, we have shown that:
\begin{proposition}
  The cup and cap functors are biadjoint (up to shift).  
\end{proposition}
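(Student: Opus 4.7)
The plan is to identify the right adjoint of the cup functor with $\RHom_{T^2}(L_1,-)$ and its left adjoint with $\dot L_1\Lotimes -$, and then use the preceding proposition to see that each coincides with the cap functor up to a grading shift.

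First I would record the two basic adjunctions separately. The cup functor $V\mapsto L_1 \otimes_\K V$ is tensoring with the $(T^2,\K)$-bimodule $L_1$; because $\K$ is a field this is exact, and the standard tensor--Hom adjunction immediately produces $\RHom_{T^2}(L_1,-)$ as its right adjoint on $D^b(T^2\mmod)$. Symmetrically, $\dot L_1 \Lotimes -$ is tensoring with the $(\K,T^2)$-bimodule $\dot L_1$, so it has right adjoint $\RHom_\K(\dot L_1,-)$. Since $\dot L_1$ is one-dimensional over $\K$ and its right $T^2$-action is the left action on $L_1$ transported along the mirror antiautomorphism, unwinding $\RHom_\K(\dot L_1,V)$ identifies this right adjoint, up to a grading shift, with the cup functor. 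Hence $\dot L_1 \Lotimes -$ is left adjoint to the cup.

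Next I would reconcile the two candidate descriptions of the cap. The preceding proposition has already done almost all the work on objects: both functors annihilate $P_{(0,0)}$, while $\RHom(L_1, P_{(0,1)}) = \K\langle -2\rangle$ sits in homological degree $2$ and $\dot L_1 \Lotimes P_{(0,1)} = \K$ sits in degree $0$. Since $P_{(0,0)}$ and $P_{(0,1)}$ generate $D^b(T^2\mmod)$, a natural isomorphism $\RHom(L_1,-) \cong \dot L_1 \Lotimes - \langle -2\rangle$ of triangulated functors is forced once one exhibits a compatible chain map. This map comes directly from the self-mirror shape $P_{(0,1)}\to P_{(0,0)}\to P_{(0,1)}$ of the minimal projective resolution of $L_1$: mirroring the resolution produces a projective resolution of $\dot L_1$ whose terms match those of the Hom complex after the uniform shift $\langle -2\rangle$. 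Twisting by $\langle 1\rangle$ rewrites this as $\RHom(L_1,-)\langle 1\rangle \cong \dot L_1 \Lotimes - \langle -1\rangle$, confirming that the two definitions of the cap functor agree.

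Combining, the cap functor is the right adjoint of the cup up to the shift $\langle -1\rangle$ and the left adjoint up to $\langle 1\rangle$, which is precisely the claim of biadjointness up to shift. The main delicate point is promoting the object-level equality of the two cap descriptions into a natural isomorphism; this is where the unusual symmetry of the resolution of $L_1$ does the work, forcing a single uniform grading shift and supplying the canonical comparison chain map.
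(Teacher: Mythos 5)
Your proposal is correct and follows essentially the same route as the paper: identify $\RHom(L_1,-)$ and $\dot L_1\Lotimes-$ as the two adjoints of the cup, then use the self-mirror shape of the resolution $P_{(0,1)}\to P_{(0,0)}\to P_{(0,1)}$ to produce the comparison isomorphism (the paper writes out the resulting complex $e_{(0,1)}P\to e_{(0,0)}P\to e_{(0,1)}P$ for a general module $P$ to make the natural transformation explicit). The paper additionally offers the Serre-functor reformulation $S^{-1}L_1\cong L_1\langle -2\rangle$ as a second way to package the same symmetry, but this is not needed for the argument you gave.
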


Every finite dimensional algebra $A$ has a Nakayama functor
$S(M):=A^*\Lotimes M$ where $A^*$ is the vector space dual of $A$
considered as a bimodule.  This functor sends the projective cover of any simple
object to its injective hull.  
The results above can be rephrased in terms of the Nakayama functor $S$
of $T^2_0$.  One can calculate that this functor sends
the projective resolution of $L_1$ to an injective resolution of $L_1$
(shifted so that the cohomology is in degree $-2$), whereas $L_0$ is
sent to a complex of injectives with cohomology in degrees $0$ and
$-1$.  

Since the algebra $T^{2}_0$ has finite global dimension (since it is
quasi-hereditary by Corollary \ref{cellular consequences}), its Nakayama functor is actually a right Serre
functor, i.e. we have a natural isomorphism for any $M,N$: \[\RHom(M,N)^*\cong \RHom(N,SM).\]  Thus, for any simple the relationship between $\RHom$ and
$\otimes$ is encoded by the fact that $\RHom(-,L)^*\cong
\dot{L}\Lotimes-$, and properties of a Serre  functor guarantee
\[\dot{L}\Lotimes-\cong \RHom(-,L)^*\cong \RHom(S^{-1}L,-).\]

Since $S^{-1}L_1\cong L_1\langle 2\rangle $, we obtain that 
\[\dot{L}_1\Lotimes-\cong \RHom(-,L_1)^*\cong \RHom(L_1,-)\langle -2\rangle .\]

One important consequence of this is that the coalgebra
$\dot{L}_1\Lotimes L_1$ and the algebra $\Ext^\bullet(L_1,L_1)$ are
identified with each other, giving a Frobenius structure on the
resulting space.  Of course, those familiar with Khovanov homology will
know what Frobenius structure to expect:
\begin{proposition}\label{S2}
  The Ext-bialgebra $\Ext^\bullet(L_1,L_1)$ is isomorphic to
  $H^*(S^2;\K)$ with its usual Poincar\'e Frobenius structure.
\end{proposition}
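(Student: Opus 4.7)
The plan is to read everything off the explicit projective resolution of $L_1$ given just before the proposition, and then to feed the resulting Ext-algebra through the Frobenius/Serre machinery that was set up in the paragraphs preceding the statement.

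First I would compute $\Ext^\bullet(L_1,L_1)$ as a bigraded vector space. Apply $\Hom(-,L_1)$ to the minimal projective resolution
\[ P_{(0,1)}\overset{\iota^-}\to P_{(0,0)} \overset{\iota^+}\to P_{(0,1)} \to L_1,\]
using that $e_{(0,0)}L_1=0$ while $e_{(0,1)}L_1=\K$ is one-dimensional (concentrated in internal degree $0$). The middle term of the resulting complex vanishes, and both outer terms are one-dimensional, so all differentials are automatically zero. Keeping track of the internal degree $1$ shifts needed to make $\iota^\pm$ into maps of degree $0$, the outcome is $\Ext^0(L_1,L_1)=\K$ and $\Ext^2(L_1,L_1)=\K\langle -2\rangle$, with $\Ext^1=0$. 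As a bigraded vector space this matches $H^*(S^2;\K)$ exactly.

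Next, the algebra (Yoneda) structure. Any graded associative algebra on $\K\oplus \K\langle -2\rangle$ with the degree-$2$ class in cohomological degree $2$ is forced to have that class square to zero for degree reasons, and the unit is the identity class in $\Ext^0$. Hence as a graded algebra $\Ext^\bullet(L_1,L_1)\cong \K[x]/(x^2)$ with $\deg x=2$, i.e.\ $H^*(S^2;\K)$.

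For the Frobenius (coalgebra) structure I would invoke the identification derived in the paragraphs leading up to the proposition,
\[ \dot{L}_1 \Lotimes - \;\cong\; \RHom(L_1,-)\langle 2\rangle, \]
which expresses the fact that the Nakayama/Serre functor acts on $L_1$ simply by the shift $\langle -2\rangle$. Specializing at $L_1$, this gives a canonical isomorphism $\dot L_1 \Lotimes L_1 \cong \Ext^\bullet(L_1,L_1)\langle 2\rangle$. The left-hand side is a coalgebra in the canonical way coming from the comonad structure on $L_1\otimes_\K \dot L_1\Lotimes-$ (i.e.\ from the unit of the biadjunction), and this is dual to the Yoneda multiplication because biadjointness identifies composition of cobordism-type $2$-morphisms with Yoneda composition. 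Transporting the coalgebra structure through the isomorphism above endows $\Ext^\bullet(L_1,L_1)$ with a coproduct that is compatible with the multiplication in precisely the Frobenius sense, with counit the projection onto the top $\Ext^2$.

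The main obstacle is really just bookkeeping in this last step: one must check that the particular comultiplication produced by biadjointness is the standard Khovanov one, $\Delta(1)=1\otimes x+x\otimes 1$ and $\Delta(x)=x\otimes x$. However, in a graded Frobenius algebra whose underlying graded algebra is $\K[x]/(x^2)$ with $\deg x=2$, and whose nondegenerate trace pairs $1$ with $x$ to a nonzero scalar, the coproduct is uniquely determined by the Frobenius identity up to a global rescaling of $x$. Thus the only computation remaining is to confirm that the trace coming from the Serre duality above is nondegenerate and pairs $1\in \Ext^0$ against the generator of $\Ext^2$—this is precisely the content of $S^{-1}L_1\cong L_1\langle -2\rangle$ and the statement that $\Ext^2(L_1,L_1)$ is one-dimensional in the appropriate internal degree. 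Once this pairing is fixed, the isomorphism with the Poincaré Frobenius structure on $H^*(S^2;\K)$ is forced.
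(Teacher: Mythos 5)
Your proof is correct and uses exactly the ingredients the paper assembles in the paragraphs preceding the proposition—the explicit minimal projective resolution of $L_1$, the resulting formula for $\RHom(L_1,-)$, and the Serre-duality identification $\dot{L}_1\Lotimes-\cong\RHom(L_1,-)\langle 2\rangle$—so it is the natural filling-in of a proof the paper itself omits. The reduction at the end (algebra structure forced by degree reasons, nondegenerate trace of the correct degree then forcing the Frobenius coproduct to be the Poincar\'e one on $H^*(S^2;\K)$ up to rescaling) is sound and works uniformly over all fields, including characteristic~$2$, as the paper notes.
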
 
This is also a special case of a more general result for
$\mathfrak{sl}_n$ \cite[3.20]{Webweb}.
This result holds over all fields, including those of characteristic
2.

\subsection{Cups and caps: $\ell>2$}
Now, let us turn to the more general case.  Now, we have $\ell$ red
strands, and expect to find functors either adding two more or capping
off two existing ones.  Furthermore, we expect it to be sufficient to
consider the cup functors, and that the caps will make their
appearance as adjoints.  

What we would like to find is a bimodule which ``inserts'' a copy of
$L_1$ with two new red strands attached to it. The beauty of using a
pictorial approach is that we can literally do exactly that; the
ugliness of a pictorial approach is that we then have to check a bunch
of relations to make sure we didn't just set everything to 0.

Let $+_i$ denote the tangle which (reading from the bottom) adds a cup between the $i$ and
$i+1$st strands, and $-_i$ its reflection in the vertical axis.
\begin{equation}
  \tikz[baseline, very thick]{
\draw (0,-.5) to[out=90,in=-90]  (-.5,.5); 
\node at (.25,0){$\cdots$}; 
\draw (1,-.5) to[out=90,in=-90]  (.5,.5);
\draw  (1.5,.5) to[out=-90, in=0] (1.25,.2) to [out=180,in=-90]  (1,.5);
\draw (1.5,-.5) to[out=90,in=-90]  (2,.5); 
\node at  (2.25,0){$\cdots$}; 
\draw (2.5,-.5) to[out=90,in=-90]  (3,.5); \node at (1.25,-1.1){$+_i$};}
\qquad \qquad \tikz[baseline, very thick]{
\draw (0,.5) to[out=-90,in=90]  (-.5,-.5); 
\node at (.25,0){$\cdots$}; 
\draw (1,.5) to[out=-90,in=90]  (.5,-.5);
\draw  (1.5,-.5) to[out=90, in=0] (1.25,-.2) to [out=180,in=90]  (1,-.5);
\draw (1.5,.5) to[out=-90,in=90]  (2,-.5); 
\node at  (2.25,0){$\cdots$}; 
\draw (2.5,.5) to[out=-90,in=90]  (3,-.5); \node at
(1.25,-1.1){$-_i$};}\label{eq:pm}
\end{equation}

More formally, let a $+_i$-Stendhal diagram be a diagram which follows
the Stendhal rules except that one of the red strands is a cup
connected to the top in the $i+1$st and $i+2$nd position at $y=1$;
this cup must have a unique minimum, and there is a black strand which
connects $y=1$ to this minimum. One example of a   $+_1$-Stendhal
diagram with $\ell=1$ is \begin{equation*}
  \begin{tikzpicture}[very thick,xscale=2,yscale=1.5]
\draw[wei] (-1,0) -- (-1,1);  \draw[wei] (0,1) to [out=-90,in=180] (.5,.2)
to[out=0,in=-90] (1,1);
\draw  (.5,.2) to[out=90,in=-90] (.6,1); 
\draw  (-.4,0) to[out=90,in=-90] (-.2,1); 
\draw  (.8,0) to[out=90,in=-90] (.3,1); 
\draw  (-.1,0) to[out=90,in=-90] (-.8,1);
\draw  (-.7,0) to[out=90,in=-90] node [pos=.4,fill=black,circle,inner sep=1.5pt]{}(-.5,1); 
  \end{tikzpicture} 
\end{equation*}
We can assign $+_i$-Stendhal diagrams a degree as usual, ignoring the
minimum; thus the diagram above with 3 black/black crossings, 1 dot,
and 1 red/black crossing has degree $-3$.

\begin{definition}
Let $\mathfrak{K}_i$ be the $T^{\ell+2}-T^\ell$-bimodule spanned over
$\K$ by  $+_i$-Stendhal diagrams modulo the local relations of
$T^\ell$ and the additional relations:
\newseq
  \begin{equation*}\subeqn\label{cup-1}
    \begin{tikzpicture}[very thick]
      \draw[wei] (0,1) to [out=-90,in=180] (.5,.2) to[out=0,in=-90]
      (1,1); \draw (.5,.2) to[out=90,in=-90] node
      [pos=.4,fill=black,circle,inner sep=1.5pt]{} (.6,1); \node at
      (1.5,.6) {$=0$};
    \end{tikzpicture} \qquad
    \begin{tikzpicture}[very thick]
      \draw[wei] (0,1) to [out=-90,in=180] (.5,.2) to[out=0,in=-90]
      (1,1); \draw (.5,.2) to[out=90,in=-90] (-.2,1); \node at
      (1.5,.6) {$=0$};
    \end{tikzpicture}
    \qquad
    \begin{tikzpicture}[very thick]
      \draw[wei] (0,1) to [out=-90,in=180] (.5,.2) to[out=0,in=-90]
      (1,1); \draw (.5,.2) to[out=90,in=-90] (1.2,1); \node at
      (1.5,.6) {$=0$};
    \end{tikzpicture}
  \end{equation*}
  \begin{equation*}\subeqn\label{cup-2}
    \begin{tikzpicture}[very thick]
      \draw[wei] (0,1) to [out=-90,in=180] (.5,.2) to[out=0,in=-90]
      (1,1); \draw (.5,.2) to[out=90,in=-90] (.6,1); \draw (-.1,0)
      to[out=90,in=-90] (1.2,1); \node at (1.5,.6) {$=$}; \draw[wei]
      (2,1) to [out=-90,in=180] (2.5,.2) to[out=0,in=-90] (3,1); \draw
      (2.5,.2) to[out=90,in=-90] (2.6,1); \draw (2.5,0)
      to[out=15,in=-90] (3.2,1);
    \end{tikzpicture} \qquad
    \begin{tikzpicture}[very thick]
      \draw[wei] (0,1) to [out=-90,in=180] (.5,.2) to[out=0,in=-90]
      (1,1); \draw (.5,.2) to[out=90,in=-90] (.6,1); \draw (1.1,0)
      to[out=90,in=-90] (-.2,1); \node at (1.4,.6) {$=-$}; \draw[wei]
      (2,1) to [out=-90,in=180] (2.5,.2) to[out=0,in=-90] (3,1); \draw
      (2.5,.2) to[out=90,in=-90] (2.6,1); \draw (2.5,0)
      to[out=165,in=-90] (1.8,1);
    \end{tikzpicture}
  \end{equation*}
The coevaluation functor $\mathbb{K}_i$ is given by
$\mathfrak{K}_i\Lotimes-$.  
\end{definition}
Of course, if $\ell=0$, then the resulting bimodule is just $L_1$.
The left and right adjoints of
$\mathbb{K}_i$ differ by the same shift as in the $\ell=0$ case (by \cite[3.17]{Webweb}).  Let \[\mathbb{E}_{i}:=\RHom(\mathfrak{K}_i,-)\langle -1\rangle \cong
\dot{\mathfrak{K}}_i\Lotimes -\langle 1\rangle.\]

As the case of $\ell=0$ shows, this is not an exact functor, but we
can do calculations with it by taking a projective resolution of
$\mathfrak{K}_i$ as a left module.  This can be done schematically as
follows:
\begin{equation*}
 \begin{tikzpicture}[very thick,xscale=4.3,yscale=.8]
    \node (a) at (0,0){\begin{tikzpicture}[very thick] \node[scale=.8]  at (-.5,.5){$\cdots$}; 
      \draw (.5,.2) to[out=90,in=-90] (.5,1); \draw[wei] (0,1) to [out=-90,in=180] (.5,.2) to[out=0,in=-90]
      (1,1); \node[scale=.8]  at (1.5,.5){$\cdots$}; 
    \end{tikzpicture}};
    \node (b) at (1,0){\begin{tikzpicture}[very thick]\node[scale=.8]  at (-.5,.5){$\cdots$}; \node[scale=.8]  at (1.5,.5){$\cdots$}; 
      \draw[wei] (0,1) to (0,0); \draw[wei] 
      (1,1) to (1,0); \draw (.5,0) to[out=90,in=-90] (.5,1);
\node [draw=black,fill=white,inner xsep=20pt] at (.5,0){};
    \end{tikzpicture}};
    \node (c) at (2,1){\begin{tikzpicture}[very thick]\node[scale=.8]  at (-.5,.5){$\cdots$}; \node[scale=.8]  at (1.5,.5){$\cdots$}; 
      \draw (0,1) to (0,0); \draw[wei] 
      (1,1) to (1,0); \draw[wei] (.5,0) to[out=90,in=-90] (.5,1);\node [draw=black,fill=white,inner xsep=20pt] at (.5,0){};
    \end{tikzpicture}};
    \node (d) at (2,-1){\begin{tikzpicture}[very thick]\node[scale=.8]  at (-.5,.5){$\cdots$}; \node[scale=.8]  at (1.5,.5){$\cdots$}; 
      \draw[wei] (0,1) to (0,0); \draw
      (1,1) to (1,0); \draw[wei]  (.5,0) to[out=90,in=-90] (.5,1);\node [draw=black,fill=white,inner xsep=20pt] at (.5,0){};
    \end{tikzpicture}};
    \node (e) at (3,0){\begin{tikzpicture}[very thick]\node[scale=.8]  at (-.5,.5){$\cdots$}; \node[scale=.8]  at (1.5,.5){$\cdots$}; 
      \draw[wei] (0,1) to (0,0); \draw[wei] 
      (1,1) to (1,0); \draw (.5,0) to[out=90,in=-90] (.5,1);\node [draw=black,fill=white,inner xsep=20pt] at (.5,0){};
    \end{tikzpicture}};
\draw[->] (b) -- node[above, midway]{\begin{tikzpicture}[very thick,scale=.4,-]
      \draw (.5,.2) to[out=90,in=-90] (.5,1);\draw[wei] (0,1) to [out=-90,in=180] (.5,.2) to[out=0,in=-90]
      (1,1); 
    \end{tikzpicture}}(a);
\draw[->] (c) -- node[above, midway]{ \begin{tikzpicture}[very thick,scale=.4,-]
      \draw(0,1) to[out=-90,in=90] (.5,0); \draw[wei] 
      (1,1) to (1,0); \draw[wei]  (0,0) to[out=90,in=-90] (.5,1);
    \end{tikzpicture}}(b);  
\draw[->] (d) -- node[below, midway]{ \begin{tikzpicture}[very thick,scale=.4,-]
      \draw(1,1) to[out=-90,in=90] (.5,0); \draw[wei] 
      (0,1) to (0,0); \draw[wei]  (1,0) to[out=90,in=-90] (.5,1);
    \end{tikzpicture}} (b);
\draw[->] (e) -- node[above, midway]{ \begin{tikzpicture}[very thick,scale=.4,-]
      \draw[wei] (0,1) to[out=-90,in=90] (.5,0); \draw[wei] 
      (1,1) to (1,0); \draw  (0,0) to[out=90,in=-90] (.5,1);
    \end{tikzpicture} }(c);
\draw[->] (e) -- node[below, midway]{ $- \,\begin{tikzpicture}[very thick,scale=.4,-,baseline=2pt]
      \draw[wei] (1,1) to[out=-90,in=90] (.5,0); \draw[wei] 
      (0,1) to (0,0); \draw  (1,0) to[out=90,in=-90] (.5,1);
    \end{tikzpicture}$} (d);
\end{tikzpicture}
\end{equation*}
Here the boxes are there to fix the sequence at their top and impose
no other relations.  This is a complex of projective left modules;
there is no right action $T^\ell$ on each of the terms in this complex that commutes with the differentials. However,
by general nonsense there is an $A_\infty$-action of the algebra
$T^{\ell}$ on the complex, that is, an action
where the relations only hold up to an appropriate notion of homotopy
(see \cite[\S 2.3]{Webweb}).

What compatibility do we expect between these functors?  For any
composition of cups and caps, we have an associated functors, and we expect that any two ways of factoring a flat
$(p,q)$-tangle (that is, one with no crossings) as a composition of
functors will give isomorphic functors.  However, we expect much more
than this: the flat tangles form a 2-category, with morphisms given by
cobordisms.

In order to connect this construction to Khovanov homology, we use a
construction of Bar-Natan which defines a quotient of this category by
imposing additional relations. 
\begin{definition}
  We let $\mathcal{{BN}}$ be the 2-category dotted cobordism
  category with the relations given in \cite[\S
11.2]{BarN05}.  The objects of this category are
  non-negative integers, its 1-morphisms are flat tangles and its
  2-morphisms are cobordisms decorated with dots modulo Bar-Natan's ``sphere,'' ``torus''
  and ``neck cutting'' relations.  
\end{definition}
Note that this 2-category is not quite what Bar-Natan considers in
\cite{BarN05}; he considers a ``canopolis'' which contains a more
flexible and general notion of composition. For our purposes, it seems
to be necessary to use this more restrictive framework.

There is also a graded version of this 2-category where the
1-morphisms are formal grading shifts of flat tangles, and the
morphisms are cobordisms of degree 0 (with grading shifts accounted for). Following the 
\cite[\S 6]{BarN05}, the notation $\{m\}$ means decreasing the grading by
$m$; that is, a morphism $T\to T' \{m\}$ in the graded category is one
of degree $m$ in the ungraded category.

\begin{theorem}[\mbox{Chatav \cite[\S 4.1]{Chatav}; Mackaay-W \cite[4.21]{Webweb}}]\label{th:Chatav}
 The functors $ \mathbb{K}_i$ and $\mathbb{E}_{i}$ define a strict 2-representation $\gamma$ of the Bar-Natan 2-category
 $\mathcal{{BN}}$ such that
 \begin{itemize}
 \item Each integer $\ell$ is sent to the category of modules over
   $T^\ell$: we have that $\gamma(\ell)=D^b(T^\ell\gmod)$.
\item The cup tangles and cap tangles 
   are sent to $\mathbb{K}_i$ and $\mathbb{E}_{i}$: we have that
   $\gamma(+_i)=\mathbb{K}_i$ and $\gamma(-_i)=\mathbb{E}_i$. 
\item Cobordisms corresponding to handle attachments are sent to the
  unit or counit of the appropriate adjunction between $\mathbb{K}_i$
  and $\mathbb{E}_{i}$.
 \end{itemize}
This 2-representation can be extended to the graded Bar-Natan category
intertwining the Tate twist $\langle m\rangle$ in $D^b(T^\ell\gmod)$ with
the grading shift $\{m\}$ in the graded Bar-Natan 2-category.
\end{theorem}

Note that in this context, Bar-Natan's relations actually follow immediately from
Proposition \ref{S2}, since these relations just express the structure
of the cohomology ring $H^*(S^2;\K)$.  Bar-Natan's
relations then just specify that if $t$ is the unique element of
degree 2 with trace 1, then this element has square 0, and that
the dual ordered basis to $\{t,1\}$ under the Frobenius trace is $\{1,t\}$.

\subsection{Comparison with Khovanov homology}
\label{sec:comp-with-khov}

The calculations we have done up to this point suggest an approach to
finding a knot invariant, or more generally a tangle invariant. As
is often necessary in quantum topology, we will choose a generic
tangle projection and perform a construction using it which ultimately we
can see is independent of the choice.  If we slice this tangle
projection along horizontal lines, we can cut it up into simple pieces
each with one of the following forms, shown in the equations \eqref{eq:sigma}
and \eqref{eq:pm}:
\begin{itemize}
\item a positive crossing $\sigma_i$ of the $i$th and $i+1$st strands,
\item a negative crossing $\sigma_i^{-1}$ of the $i$th and
  $i+1$st strands,
\item  a cup  $+_i$ appearing between the $i$th and $i+1$st
  strands, or 
\item a cap $-_i$ joining the $i+1$th and $i+2$st strands.
\end{itemize}
We will define a functor $\mathscr{K}$ such that:
\newseq
\begin{align*}
\mathscr{K}(\sigma_i)&=\mathbb{B}_i\langle 1\rangle&\mathscr{K}(\sigma_i^{-1})&=\mathbb{B}_i^{-1}\langle -1\rangle\subeqn\label{K-def1}\\
\mathscr{K}(+_i)&=\mathbb{K}_i&\mathscr{K}(-_i)&=\mathbb{E}_{i}\subeqn\label{K-def2}.
\end{align*}
For any $(p,q)$-tangle $\mathscr{T}$, we
choose a generic projection, cut into these pieces and define
$\mathscr{K}(\mathscr{T}) \colon T^p\gmod \to T^q\gmod$ by composing
the functors associated to the pieces by (\ref{K-def1}--\ref{K-def2}).
Note, we are using unoriented knots; ``positive'' and ``negative'' as
used above are relative to the $y$-coordinate in the plane (either
both strands upward or downward oriented).  For the moment, ignore
that this depended on a choice of projection.  

While what we have written thus far points naturally to this
definition, it's not completely satisfactory. It doesn't have an obvious
  connection to Khovanov homology, nor have we checked that it defines
  a tangle invariant (that it doesn't depend on the choice of
  projection). 

However, we have an alternate definition of a knot invariant which fixes
both these problems: we could simply transport structure from
Bar-Natan's paper. That is, if we have a tangle with no crossings,
then the corresponding functor is that of Theorem \ref{th:Chatav}, and
for $\sigma_i$, we take the image under the
2-functor $\gamma$ of a particular complex in Bar-Natan's cobordism
category, given by the saddle cobordism from the identity to the
composition of a cap and cup.  

If we consider
the complex $[\mathcal{T}]$ associated to a tangle $\mathcal{T}$ in
Bar-Natan's construction \cite[\S 2.8]{BarN05} or more precisely its
graded version defined in \cite[6.4]{BarN05}, its image $\gamma( [\mathcal{T}])$ is a
complex of functors $D^b(A^p\gmod)\to D^b(A^q\gmod)$; we can take
iterated cone of this complex\footnote{Technically, one should keep track of a
  dg-enhancement in order to make sense of this iterated cone, but we
  just use the standard one on any derived category of an abelian
  category with enough projectives.  That is, we always just replace
  everything with its projective resolution; any morphism in the
  derived category lifts to a chain map between projective resolutions,
and we can take the cones of these.} to get a single functor $D^b(A^p\gmod)\to D^b(A^q\gmod)$, which
we'll also denote $\gamma( [\mathcal{T}])$ by an abuse of notation.
This is a tangle invariant, since the homotopy type of $[\mathcal{T}]$
is a tangle invariant by \cite[Th. 1]{BarN05}.

Since both $\gamma(
[\mathcal{T}])$ and $\mathscr{K}(\mathcal{T})$ are functorial under
 tangle composition, its enough to check that they coincide on the cup,
cap and crossing tangles considered earlier.  This follows by
definition for the cup and cap.  Thus it only needs to be checked for
the crossing:
\begin{theorem}\label{comparison}
The 2-functor $\gamma$ sends the cone of the crossing complex in
$\mathcal{{BN}}$ to the functor $\mathbb{B}_i$. More generally, $\gamma(
[\mathcal{T}])\cong \mathscr{K}(\mathcal{T})$.
  \end{theorem}

Consider the action of Bar-Natan's positive crossing: this is the cone
of a map between two functors, the identity functor and
$\mathbb{E}\mathbb{K}\langle 1\rangle $.  In fact, both of these
correspond to derived tensor product with honest bimodules, given by
the algebra $T^\ell$ itself, and the second by
$\mathfrak{K}_i\otimes_{T^{\ell}} \dot {\mathfrak{K}}_i$. Thus, the
image of the crossing under $\gamma$ is the cone of the unit $\upsilon$ of
the adjunction $(\mathbb{E},\mathbb{K}\langle 1\rangle )$.

The counit of this adjunction is given by the pairing $\mathfrak{\dot
  K}_i\otimes_{T^\ell} \mathfrak{K}_i\to T^{\ell-2}$ where we stack the
diagrams, and simplify using the relations of $T^\ell$.  The result is
a Stendhal diagram with a single red circle which we evaluate by sending the
``theta'' diagram to the empty diagram:
\[\tikz[very thick, baseline=-2pt]{\draw (0,-.5)--(0,.5); \draw [wei] (-.5,0) to[out=90,in=180] (0,.5);
  \draw [wei] (-.5,0) to[out=-90,in=180] (0,-.5); \draw [wei] (.5,0)
  to[out=90,in=0] (0,.5); \draw [wei] (.5,0) to[out=-90,in=0]
  (0,-.5); }\mapsto \emptyset.\]
This rule together with the relations (\ref{cup-1}--\ref{cup-2}) allow
us to simplify to a diagram in $T^{\ell-2}$.  

Thus the  unit is given by
sending the identity $1\in T^\ell$ to the canonical element of this pairing.  This is given by the sum of all diagrams with no
crossings, and a single pair of red cups and caps with one black strand
inside the cup and inside the cap.  We can evaluate any other element of the
algebra by multiplying the image of the identity on the left or
right.  Note that any idempotent which does not have exactly 1 black strand
between these two reds will kill this element and thus be sent to
zero.  For example 
\[\tikz[baseline=-2pt,very thick]{\draw[wei] (-.5,-.5) -- (-.5,.5);  \draw[wei]
  (0,-.5) -- (0,.5);\draw (.5,-.5) -- (.5,.5);}\,\mapsto \, 0 \qquad
\qquad \tikz[baseline=-2pt,very thick]{\draw[wei] (-.5,-.5) -- (-.5,.5);  \draw
  (0,-.5) -- (0,.5);\draw[wei] (.5,-.5) -- (.5,.5);}  \,\mapsto \,
\tikz[baseline=-2pt,very thick]{\draw
  (0,-.5) -- (0,-.2);\draw (0,.2) -- (0,.5);\draw[wei] (-.5,-.5) to[out=90,in=180] (0,-.2) to[out=0,in=90] (.5,-.5);  \draw[wei] (-.5,.5) to[out=-90,in=180] (0,.2) to[out=0,in=-90] (.5,.5); } \]
In general, this evaluation can proceed by fixing some horizontal
slice $y=a$ and pinching the $i+1$st and $i+2$nd  red strands together to make a cup
and cap;  if for any $a\in [0,1]$ there is not exactly 1 black strand
between these two reds  at $y=a$, we get 0.  

On the other hand, we have a natural map $\psi\colon \bra_i\to T^\ell$ given by
using the ``0-smoothing'' of the red crossing, that is slicing
vertically through the red crossing in order to produce two strands
with no crossing.  
\[  \tikz[baseline=-2pt, very thick,scale=.5]{
\draw[wei] (-3.5,-1)
      -- +(0,2); \node at
      (-2,0){$\cdots$}; \node at (2,0){$\cdots$}; \draw[wei] (3.5,-1)
      -- +(0,2); \draw[wei]
      (1,-1) -- (-1,1) node[pos=.5,fill=white,circle]{}; \draw[wei]
      (-1,-1) -- (1,1); }\,\mapsto \,\tikz[baseline=-2pt, very thick,scale=.5,wei]{
\draw[wei] (-3.5,-1)
      -- +(0,2); \node at
      (-2,0){$\cdots$}; \node at (2,0){$\cdots$}; \draw[wei] (3.5,-1)
      -- +(0,2); \draw[wei]
      (1,-1) -- (1,1); \draw[wei]
      (-1,-1) -- (-1,1); 
}\]
This is obviously compatible with the relations and
injective. It's image is killed by $\upsilon$, since doing the
``pinch'' at the $y$-value where the 0-smoothing occurs gives two red
strands not separated by a black, and thus 0.    Thus, we will complete
the proof of Theorem \ref{comparison} by showing:

\begin{lemma}
  The map $\psi$ induces an isomorphism $\bra_i\cong \ker \upsilon$.  
\end{lemma}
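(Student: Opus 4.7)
The plan is to show that $\psi$ fits into a short exact sequence
\[
0 \to \bra_i \xrightarrow{\psi} T^\ell \xrightarrow{\phi} \mathfrak{K}_i \otimes_{T^{\ell-2}} \dot{\mathfrak{K}}_i \to 0
\]
of $T^\ell$-bimodules (up to an overall grading shift on the rightmost term). Since we have already been given that $\psi$ is injective and that $\phi \circ \psi = 0$, the lemma reduces to checking that $\phi$ is surjective and that $\ker(\phi) \subseteq \im(\psi)$.

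First, I would verify surjectivity of $\phi$ by a diagrammatic calculation. The codomain $\mathfrak{K}_i \otimes_{T^{\ell-2}} \dot{\mathfrak{K}}_i$ is spanned over $\K$ by gluings of a $+_i$-Stendhal diagram above and its mirror below, joined along their $T^{\ell-2}$ part. The element $\phi(1)$ is, by the explicit description given in the text, the canonical pairing element: the sum of all diagrams obtained from the identity by pinching the red strands at positions $i+1$ and $i+2$ into a cup-and-cap pair with a black strand threaded through each. Multiplying $\phi(1)$ on the left and right by arbitrary elements of $T^\ell$ produces every such diagrammatic generator, giving surjectivity.

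For the inclusion $\ker(\phi) \subseteq \im(\psi)$, I would carry out a graded dimension count using the cellular bases introduced earlier in the paper. The cellular basis of $\bra_i$ is indexed by pairs of backdrops on Young diagrams whose $\pm$-sign sequences are obtained from those indexing the basis of $T^\ell$ by swapping the $i$th and $(i+1)$st entries (equivalently, adding or removing a box on the relevant diagonal). The tensor product $\mathfrak{K}_i \otimes_{T^{\ell-2}} \dot{\mathfrak{K}}_i$ inherits a natural basis from its diagrammatic definition, indexed by pairs of backdrops whose sign sequences at positions $i+1, i+2$ are constrained in a complementary way by the cup-cap configuration. The combinatorial heart of the argument is then a bijection between the cellular basis of $T^\ell$ and the disjoint union of these two indexing sets, which yields $\dim \bra_i + \dim(\mathfrak{K}_i \otimes \dot{\mathfrak{K}}_i) = \dim T^\ell$ in each graded piece. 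Combined with the surjectivity of $\phi$ and the injectivity of $\psi$, this forces $\dim \im(\psi) = \dim \ker(\phi)$ and hence equality.

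The main obstacle is the precise combinatorial matching of the three cellular bases, together with careful bookkeeping of grading shifts and backdrop labels. In particular, one must reconcile the sign-sequence modifications at positions $i, i+1$ (which govern $\bra_i$) with those at positions $i+1, i+2$ (which govern the codomain of $\phi$), and with the internal grading shifts coming from the cup and cap bimodules. Once this combinatorial bookkeeping is set up correctly, both the surjectivity check and the dimension count reduce to essentially mechanical diagrammatic verifications.
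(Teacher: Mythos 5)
Your skeleton is right — set up the sequence $0 \to \bra_i \xrightarrow{\psi} T^\ell \xrightarrow{\phi} \mathfrak{K}_i\otimes\dot{\mathfrak{K}}_i \to 0$, use the known injectivity of $\psi$ and $\phi\psi=0$, and finish with a graded dimension count. But the plan as written leaves the real work undone, and it is not the ``essentially mechanical'' bookkeeping you suggest. Two concrete problems.

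First, the claim that $\mathfrak{K}_i\otimes_{T^{\ell-2}}\dot{\mathfrak{K}}_i$ ``inherits a natural basis from its diagrammatic definition'' is not justified. A tensor product over a non-semisimple algebra does not automatically inherit a basis from bases of the factors; one either needs flatness, or a projective resolution and a Tor computation, or a cellular/standard filtration argument showing the relevant $\Tor_1$ vanishes. Determining the graded dimension of this bimodule for general $\ell$ is precisely the nontrivial content of the lemma, and your proposal restates it as a ``combinatorial bijection'' between cellular bases without providing it. Until that bijection is exhibited, nothing has been proved.

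Second, you missed the key simplification the paper uses: the statement can be reduced to $\ell=2$ by invoking the compatibility of the cup and braiding bimodules with standardization functors (\cite[5.8 \& 6.19]{Webmerged}). Once $\ell=2$ is assumed, the whole computation collapses to finite, explicit linear algebra: $T^2_0$ is $5$-dimensional, $\bra_i$ is $4$-dimensional with an explicit cellular basis, $\mathfrak{K}_i\otimes\dot{\mathfrak{K}}_i$ is $1$-dimensional, and one simply checks where the five basis vectors go. Your approach, by contrast, would require controlling partitions in a $k\times(\ell-k)$ box and all backdrops simultaneously, reconciling sign-sequence changes at positions $i,i{+}1$ against cup-cap constraints at $i{+}1,i{+}2$ — a much heavier combinatorial burden than the paper incurs, and one you do not carry out. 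If you want to pursue the direct route, the place to start is a standard filtration of $\mathfrak{K}_i$ as a left module (the paper sketches its projective resolution) and a proof that the tensor product over $T^{\ell-2}$ is concentrated in degree zero; otherwise, adopt the reduction to $\ell=2$.
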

\begin{proof}
 We can reduce to the case where $\ell=2$ using
 \cite[6.9 \& 7.19]{Webmerged}. Assuming $\ell=2$, this is a simple calculation; one
  simply notes that both $\bra_i$ and $\ker \upsilon$ are 4
  dimensional.   There is a basis of $\bra_i$ (compatible with the
  cellular filtration as a left module) which is
  given by
\[ \tikz{ 
\node at (-5,0){\tikz[xscale=.8,yscale=.6]{
\node at (-2,.5){\tikz{\draw (.3,.3) -- (0,.3) -- (0,0);\draw(.7,.3) --
    (.7,0) -- (1,0) --(1,.3)--cycle; \node[scale=.7] at (.16,.14) {$2$};\node[scale=.7] at (.86,.14) {$2$};}};
\draw[wei,very thick] (0,0)--(.5,1);
    \draw[wei,very thick] (.5,0) --(0,1);\draw[very thick] (1,0)--(1,1); }};
\node at (0,0){ \tikz[xscale=.8,yscale=.6]{
\node at (-2,-1){\tikz{\draw(0,.3) -- (0,0) -- (.3,0) --(.3,.3)--cycle;\draw  (1,.3) --(.7,.3) --
    (.7,0);\node[scale=.7] at (.16,.14)
    {$2$};\node[scale=.7] at (.86,.14) {$1$};}};
\node at (-2,-2.5){\tikz{\draw(0,.3) -- (0,0) -- (.3,0) --(.3,.3)--cycle;\draw  (1,.3) --(.7,.3) --
    (.7,0); \node[scale=.7] at (.16,.14) {$2$};\node[scale=.7] at (.86,.14) {$2$};}};
\draw[wei,very thick] (0,-1.5)--(.5,-.5);
    \draw[wei,very thick] (1,-1.5) --(0,-.5);\draw[very thick] (.5,-1.5)--(1,-.5);
\draw[wei,very thick] (0,-3)--(.5,-2);
    \draw[wei,very thick] (.5,-3) to[out=90,in=-90] (.9,-2.6) to[out=90,in=-90] (0,-2);\draw[very thick] (1,-3)
    to [out=90,in=-90] 
(.6,-2.6) to [out=90,in=-90] 
 (1,-2); }};
\node at (5,0){ \tikz[xscale=.8,yscale=.6]{\node at (0,1){\mbox{}};
\draw[wei,very thick] (.5,-1.5)--(0,-.5);
    \draw[wei,very thick]
    (0,-1.5)--(1,-.5);\draw[very thick] (1,-1.5) --(.5,-.5);
\node at (-2,-1){\tikz{\draw (.3,.3) -- (0,.3) -- (0,0);\draw(.7,.3) --
    (.7,0) -- (1,0)--(1,.3)--cycle; \node[scale=.7] at (.16,.14) {$1$};\node[scale=.7] at (.86,.14) {$2$};}};
}};} \]
These are sent under the map breaking open the crossing to 4 of the 5
basis vectors shown in Section \ref{sec:an-example}, which necessarily
span the kernel of $\upsilon$.
\end{proof}

\begin{proof}[Proof of Theorem \ref{Khovanov}]
  By \cite[Th. 1]{BarN05}, for a link $\mathcal{L}$, we have that
  $[\mathcal{L}]$ is just the Khovanov homology
  $\operatorname{Kh}(\mathcal L)$ of this link, tensored with the
  empty diagram (the derivation of this result using delooping is actually explained more clearly in
  \cite{BNfast}).  Thus
  $\gamma( [\mathcal{L}])$ is an endofunctor of  $D^b(T^0\gmod)\cong D^b(\K\gmod)$ 
given by tensor product with $\operatorname{Kh}(\mathcal L)$,
  thought of as a complex of graded vector spaces, though with
  slightly different grading, since the internal grading in
  Bar-Natan's picture is sent to the Tate twist in our grading.  Thus,
  the same is true for $\mathscr{K}(\mathcal L)$ by Theorem \ref{comparison}.   
\end{proof}

The readers familiar with the literature on Khovanov homology might
get a bit nervous around this point: though Bar-Natan's construction is
beautiful, it has a well-known flaw: it only allows one to define
functoriality maps on Khovanov homology up to sign.  However, a fix
for this issue was found by Clark, Morrison and Walker \cite{CMW} and
can be transported into our picture in a straightforward way.  Recall that our
identification with Khovanov homology involved considering a map
$\bra_i\to T^\ell$ and identifying its cokernel with
$\mathfrak{K}_i\otimes_{T^{\ell}} \dot {\mathfrak{K}}_i$.  While these
modules are isomorphic, they are not {\it canonically} isomorphic.
Rather than taking the obvious identification, one should insert
factors of $i$ or $-i$ to account for orientations.  We leave to the
reader the details of transporting the disoriented Bar-Natan category
using this approach.

\subsection{Jones-Wenzl projectors}
\label{sec:signs}

Another construction in the category $\mathcal{{BN}}$ which we
would like to understand in terms of $T^\ell$ is the categorified Jones-Wenzl
projector $\mathsf{P}_\ell$ of
Cooper and Krushkal \cite{CoKr}.  Much like the crossing, we can
easily transport this structure to an endofunctor using the 2-functor
$\gamma$; however, since this complex is unbounded, it induces a
autofunctor on the bounded above derived category $D^-(T^\ell\gmod)$ of graded
$T^\ell$ modules\footnote{Actually, there are dual categorical
  Jones-Wenzl projectors, one bounded above and one bounded below as
  complexes.  We'll always use the bounded above one.}.

Each algebra
  $T^\ell_{\ell-2k}$ has a single indecomposable
  projective-injective; this is given by a divided power functor
$\fF^{(k)}P_\emptyset$.   Under the correspondence of indecomposable
projectives to parity sheaves on the Grassmannian
$\Gr(k,\ell)$, the object $\fF^{(k)}P_\emptyset$ is sent to the
constant sheaf $\K_{\Gr(k,\ell)}$. Thus the endomorphism ring 
$\End(\fF^{(k)}P_\emptyset)$ is isomorphic to the cohomology
ring of $H^*(\Gr(k,\ell);\K)$.  We can also establish this
algebraically, since symmetric polynomials in the dots span
$\End(\fF^{(k)}P_\emptyset)$ and we must have that $h_m(\mathbf{y})=0$ for
$m>\ell-k$ since $I_0$ acts trivially.  This defines a surjective map
$H^*(\Gr(k,\ell);\K)\to \End(\fF^{(k)}P_\emptyset)$ which a dimension
count shows must be an isomorphism\footnote{One can also derive this
  using the equivalence to a singular block of category $\cO$ and
  Soergel's Endomorphismensatz \cite{Soe90}.}.
\begin{definition}
  We let $\mathcal{S}_0$ be the subcategory of $D^-(T^\ell\gmod)$
  consisting of complexes of projective-injectives. 
\end{definition}

This subcategory has an
orthogonal $\mathcal{S}_0^\perp$, given by the objects whose
composition factors are all killed by $\fE^k$.  Typically, one has to
specify left or right orthogonals in a categorical setting, but in
this case, these coincide.

Since the left and
right orthogonals coincide, there is a unique projection $\pi_\ell$ to
$\mathcal{S}_0$ killing this orthogonal.  A similar projection on
blocks of category $\cO$ is considered in \cite[\S 8]{FSS}; their projection
is intertwined with $\pi_\ell$ by the equivalence of $A^\ell_n\mmod$ to
a block of category $\cO$ discussed in Section
\ref{sec:relationship-sheaves}.

This may sound like an abstract operation, but in terms of algebras,
it's really very concrete. Recall the idempotent $e_0$ defined in
Section \ref{sec:stendhal-diagrams}.  Consider the
bimodule $T^\ell e_0 T^\ell\subset T^\ell$ .  Essentially by
definition, this is the bimodule of diagrams as in $T^\ell$
which have all black strands right of all red at $y=\nicefrac 12$.
\begin{lemma}
  The projection functor $\pi_\ell$ coincides with the derived tensor product $T^\ell e_0 T^\ell\Lotimes-$
\end{lemma}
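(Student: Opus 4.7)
The plan is to verify that $F := T^\ell e_0 T^\ell \Lotimes_{T^\ell} -$ coincides with $\pi_\ell$ by checking the two defining properties of a projection onto a semi-orthogonally complemented subcategory: (a) $F(M) \in \mathcal{S}_0$ for every $M$, and (b) the cone of the natural map $F(M) \to M$ induced by $T^\ell e_0 T^\ell \hookrightarrow T^\ell$ lies in $\mathcal{S}_0^\perp$. Since the left and right orthogonals of $\mathcal{S}_0$ coincide, these two conditions together characterize $\pi_\ell$ up to canonical isomorphism.

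The key structural input for (a) is the bimodule identification $T^\ell e_0 T^\ell \cong T^\ell e_0 \otimes_{R^\ell} e_0 T^\ell$, where $R^\ell = e_0 T^\ell e_0$, upgraded to the derived level. Both the bimodule iso and the absence of higher Tor follow once $T^\ell e_0$ is shown to be a projective (indeed free) right $R^\ell$-module. The cellular basis of Section \ref{sec:natural-basis} delivers this directly: restricting to cellular vectors $C_{\sS,\sT}$ with $\sT$ the unique backdrop corresponding to $\kappa = 0$ exhibits an $R^\ell$-free basis of $T^\ell e_0$. Substituting yields $F(M) \cong T^\ell e_0 \Lotimes_{R^\ell} e_0 M$. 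Since $P_0 = T^\ell e_0 \cong \fF^k P_\emptyset$ decomposes as a direct sum of grading-shifted copies of the indecomposable projective-injective $\fF^{(k)} P_\emptyset$ (the categorification of $F^k = [k]_q! \, F^{(k)}$), a flat resolution of $e_0 M$ over $R^\ell$ produces a representative of $F(M)$ all of whose terms are direct sums of projective-injectives, placing $F(M)$ in $\mathcal{S}_0$.

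For (b), the cone is computed by $(T^\ell/T^\ell e_0 T^\ell) \Lotimes_{T^\ell} M$, every cohomology $H$ of which carries a zero action of the ideal $T^\ell e_0 T^\ell$, and hence in particular $e_0 H = 0$. The decisive observation is that for any $T^\ell_{\ell-2k}$-module $N$ one has $\fE^k N \cong e_0 N$ as a $\K$-vector space (with $T^\ell_\ell \cong \K$): iterating $\fE = e_\phi \cdot -$ forces, at each application, the current rightmost black strand to sit to the right of every red strand, and after $k$ iterations this amounts to exactly the idempotent $e_0$. Therefore $\fE^k H = 0$, and exactness of $\fE^k$ propagates this vanishing to every simple composition factor of $H$, which is the condition for $H$ to lie in $\mathcal{S}_0^\perp$.

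The hardest step will be the bimodule identification and its derived upgrade, which rests on the right-$R^\ell$-freeness of $T^\ell e_0$. The cellular basis makes this transparent once one pinpoints which basis elements contribute, so the obstruction is really bookkeeping rather than a deep difficulty. Once this structural fact is granted, (a) reduces to the categorified divided-power decomposition of $P_0$, and (b) to the elementary identification of $\fE^k$ with $e_0 \cdot -$; both are standard consequences of the categorified $\mathfrak{sl}_2$ action.
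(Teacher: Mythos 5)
Your overall strategy—verify that $F := T^\ell e_0 T^\ell \Lotimes_{T^\ell} -$ lands in $\mathcal{S}_0$ and that the cone of $F(M)\to M$ lands in $\mathcal{S}_0^\perp$—is a legitimate way to characterize the projection, and is more explicit than the paper's one-line appeal to the fact that $\mathcal{S}_0$ is generated by the summands of $T^\ell e_0$. Unfortunately, the structural claim you lean on most heavily is false.

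You assert that $T^\ell e_0$ is projective, indeed free, as a right $R^\ell = e_0 T^\ell e_0$-module, and that the cellular basis shows this by restricting to vectors $C_{\sS,\sT}$ with $\sT$ ``the unique backdrop corresponding to $\kappa=0$.'' This contradicts the paper directly: in the paragraph immediately following the lemma, it is stated that ``taking derived tensor product is necessary since $T^\ell e_0$ is not projective as a right $R^\ell$-module.'' The cellular basis does not give a free $R^\ell$-module structure—it gives a filtration by cell modules, which is a weaker assertion—and moreover there is not a unique backdrop with $\kappa=0$: any Young diagram in the box has $k!$ of them (label every row $\ell$ and choose any ordering), which is precisely what makes $R^\ell$ a $k!\times k!$ matrix ring over $H^*(\Gr(k,\ell))$. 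So the ``bookkeeping'' you defer to does not close.

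This matters because you use projectivity to conclude that the multiplication map $T^\ell e_0 \otimes_{R^\ell} e_0 T^\ell \to T^\ell e_0 T^\ell$ is an isomorphism with no higher Tor, i.e., that the honest sub-bimodule $T^\ell e_0 T^\ell \subset T^\ell$ already computes the composite $T^\ell e_0 \Lotimes_{R^\ell} e_0(-)$. Without that input, your version of (a) does not follow, since you cannot replace $T^\ell e_0 T^\ell \Lotimes_{T^\ell} M$ by a complex built from $T^\ell e_0$. What is worth noting is that you do not actually need projectivity for the underlying strategy: if one reads the lemma as the paper does—namely that $\pi_\ell$ is the \emph{composite} derived functor $T^\ell e_0 \Lotimes_{R^\ell} e_0(-)$, with $e_0(-)$ exact and $T^\ell e_0 \Lotimes_{R^\ell}-$ its derived left adjoint—then (a) follows simply by replacing $e_0 M$ with an $R^\ell$-projective resolution and observing that every term becomes a sum of grading shifts of $T^\ell e_0 \cong \fF^k P_\emptyset$, and (b) follows because applying the exact functor $e_0(-)$ to the counit $T^\ell e_0 \Lotimes_{R^\ell} e_0 M \to M$ gives a quasi-isomorphism, so the cone has $e_0 H^i = 0$; your argument with $\fE^k = e_0(-)$ and exactness then finishes correctly. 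In short: the skeleton of your proof is sound and close to what the paper leaves implicit, but the projectivity/freeness claim is wrong and, as written, load-bearing.
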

\begin{proof}
This follows immediately from the fact that
 the category $\mathcal{S}_0$ is generated
by the summands of $T^\ell e_0$.
\end{proof}

One can think of this as the composition of two adjoint
functors. Recall that $R^\ell=e_0T^\ell e_0$ is isomorphic to the
cyclotomic nilHecke algebra with a degree $\ell$ relation, via the map
that puts a nilHecke diagram to the right of $\ell$ red lines.  
We thus have a functor $M\mapsto e_0M$ which sends $T^\ell\gmod$ to
$R^\ell\gmod$, and its left adjoint $T^\ell e_0\Lotimes_{R^\ell}-$;
taking derived tensor product is necessary since $T^\ell e_0$ is not
projective as a right $R^\ell$-module.

\begin{lemma}\label{perp-hw}
  The category $\mathcal{S}_0^\perp$ is the smallest triangulated
  subcategory of $D^-(T^\ell\gmod)$ which is closed under
  categorification functors $\fE,\fF$ and
  contains all highest weight simples of weight $<\ell$.  
\end{lemma}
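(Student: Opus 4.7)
My plan is to establish both containments by first rephrasing the defining condition of $\mathcal{S}_0^\perp$ in $\mathfrak{sl}_2$-representation-theoretic language and then matching up generators with generators. The starting observation is that a simple $L$ in $T^\ell_n\mmod$ lies in $\mathcal{S}_0^\perp$ if and only if $\fE^{(\ell-n)/2} L = 0$; in a categorical $\mathfrak{sl}_2$-action this condition says that $L$ lives in the categorified isotypic component corresponding to the $\mathfrak{sl}_2$-irreducibles of highest weight strictly less than $\ell$. In particular, the highest-weight simples of weight $<\ell$ (those killed by $\fE$) are exactly the $\fE$-killed generators of $\mathcal{S}_0^\perp$.

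For the inclusion $\mathcal{C}\subseteq\mathcal{S}_0^\perp$, where $\mathcal{C}$ denotes the triangulated subcategory described in the lemma, I would check that $\mathcal{S}_0^\perp$ itself enjoys the three required properties. Triangulated closure is automatic since membership is a composition-factor condition, hence is stable under cones, shifts, and the passage to bounded above complexes; the highest-weight simples of weight $<\ell$ belong to $\mathcal{S}_0^\perp$ by the previous paragraph; closure under $\fE$ and $\fF$ follows from exactness combined with the fact that these functors respect the isotypic decomposition on $K^0(T^\ell)\otimes\mathbb{C}\cong(\mathbb{C}^2)^{\otimes\ell}$, so neither can create a composition factor in the summand of highest weight $\ell$.

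The substantive direction is $\mathcal{S}_0^\perp\subseteq\mathcal{C}$. It suffices to check that every simple $L\in\mathcal{S}_0^\perp$ belongs to $\mathcal{C}$, because any object of $D^-(T^\ell\mmod)$ lying in $\mathcal{S}_0^\perp$ is built from its composition factors via stupid truncations and the long exact sequences of a Jordan--Hölder filtration, all of which live inside any triangulated subcategory containing the factors. Given $L$ of weight $n$ and $\mathfrak{sl}_2$-highest weight $m<\ell$, the module $\fE^{(m-n)/2}L$ is nonzero, lies in weight $m$, and is killed by $\fE$; its composition factors are therefore highest-weight simples of weight $m<\ell$ and lie in $\mathcal{C}$ by hypothesis. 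The biadjunction between $\fF$ and $\fE$ (up to grading shift) then produces a non-zero map $\fF^{(m-n)/2}L'\to L$ for some such composition factor $L'$, and in the strong $\mathfrak{sl}_2$-categorification of Chuang--Rouquier this is upgraded to $L$ appearing as a direct summand of $\fF^{(m-n)/2}L'$. Hence $L\in\mathcal{C}$.

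The main obstacle is the last step: promoting $L$ from a quotient of $\fF^{(m-n)/2}L'$ to an actual direct summand, which is the content of the strong 2-representation axioms and the crystal commutator formula rather than a formal consequence of having $\fE,\fF$ functors. An alternative would be to run an induction on $m-n$ using the standardly stratified and cellular structure of $T^\ell_n$ recorded in Section \ref{sec:natural-basis}, realizing each standard module $S_\kappa$ as an iterated application of $\fF$ and $\fI$ to $P_\emptyset$ via the identification \eqref{eq:2}, and then extracting $L$ from a filtration of $S_\kappa$; in either approach the genuine input is the categorified $\mathfrak{sl}_2$-action, not just its shadow on the Grothendieck group.
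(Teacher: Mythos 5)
Your overall strategy matches the paper's: characterize $\mathcal{S}_0^\perp$ by the condition that $\fE^{(\ell-n)/2}$ (equivalently $e_0$) kills all composition factors, and then show this orthogonal is generated by its highest weight simples. The easy inclusion $\mathcal{C}\subseteq\mathcal{S}_0^\perp$ is handled essentially as in the paper, modulo the observation that $\fE$ and $\fF$ preserve $\mathcal{S}_0^\perp$ because $\fE$ and $\fF$ preserve $\mathcal{S}_0$ and form a biadjoint pair (your ``respect isotypic components on $K^0$'' argument doesn't quite do this at the categorical level; you want the adjunction $\Hom(P,\fF M)\cong\Hom(\fE P, M)$ for $P$ projective-injective).

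The gap you flag is real, and the specific claim as stated is false: for $L'$ a highest weight simple of weight $m$, the module $\fF^{(j)}L'$ is generally indecomposable with simple head $\tilde f^{\,j}L'$ but is not semisimple, so $L$ is a quotient, not a direct summand. The minimal categorification $\mathcal{L}(m)$ already shows this: there $\fF L_0 \cong \K[y]/(y^m)$, which is local and not simple for $m>1$. The strong 2-representation axioms give indecomposability of $\fF^{(j)}L'$, which points in the opposite direction from your direct-summand claim. The correct way to close the gap is a double induction: on the weight $n$ of the simple $L\in\mathcal{S}_0^\perp$, and, within a fixed weight, on $\epsilon(L)=\max\{k:\fE^k L\neq 0\}$. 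Given $L$ with $\epsilon(L)>0$, set $L''=\tilde e L$; by induction $L''\in\mathcal{C}$, hence so is $\fF L''$. The Kleshchev/Chuang--Rouquier theorem on induced modules says that every composition factor of $\fF L''$ other than the head $L=\tilde f L''$ has strictly smaller $\epsilon$-value, so the kernel $K$ of $\fF L''\twoheadrightarrow L$ lies in $\mathcal{C}$ by induction, and the triangle $K\to\fF L''\to L\to K[1]$ puts $L$ in $\mathcal{C}$. The paper itself glosses over exactly this point, asserting only that the quotient ``is generated by its highest weight simples,'' so you have correctly identified the genuine content of the lemma; your alternative route via the standard modules and \eqref{eq:2} runs into the problem that $\fI$ is not among the closure functors in the statement.
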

\begin{proof}
 If $M$ is a module of weight $m$, we can identify
 $e_0M=\fE^{(\ell-m)/2}M$; thus all highest weight simples of weight
 $<\ell$ lie in $\mathcal{S}_0^\perp$.  On the other hand,
 $\mathcal{S}_0^\perp$ is equivalent to the quotient
 $D^-(T^\ell\gmod)/\mathcal{S}_0$, which is concentrated in weights strictly
 between $\ell$ and $-\ell$.  Thus, it is generated by its highest
 weight simples, which all necessarily of weight $<\ell$.  Thus, the
 same is true of $\mathcal{S}_0^\perp$.
\end{proof}

Obviously, $T^{m}_m$ has a unique highest weight simple, which we
denote $P_\emptyset$.

\begin{lemma}\label{cup-simple}
    The images of $P_\emptyset$ under the different 
flat $(\ell,m)$ tangles with no caps are a complete, irredundant list of highest
weight simples of weight $m$.
\end{lemma}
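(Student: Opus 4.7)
The plan is to establish simplicity, distinctness, and completeness separately, using the decategorification theorem together with Lemma \ref{perp-hw}. For simplicity, I argue by induction on $(\ell-m)/2$, the number of cups in the flat tangle $\mathscr{T}$. The base case $\ell=m$ is immediate since $P_\emptyset$ is the unique simple in $T^m_m\mmod$. For the inductive step, write $\mathscr{T}=\mathbb{K}_i\circ\mathscr{T}'$ and set $L:=\mathscr{T}'P_\emptyset$, assumed to be a highest weight simple. One must check that $\mathbb{K}_iL$ is concentrated in one cohomological degree and that it is a simple module killed by $\fE$. The first is handled by computing $\mathfrak{K}_i\Lotimes L$ term by term against a projective resolution of $L$, using the relations (\ref{cup-1})--(\ref{cup-2}) to kill higher Tor's. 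For the second, $\mathbb{K}_i$ pictorially inserts the invariant $\C\hookrightarrow \C^2\otimes\C^2$, so $\fE$ commutes past it to annihilate the result; the image is cyclic, and since its Grothendieck class will turn out to be a single basis vector (see below), it must be irreducible.

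For distinctness, passing to the Grothendieck group, $\mathbb{K}_i$ decategorifies to the invariant inclusion placed in the $i$-th and $(i+1)$-st positions of $(\C^2)^{\otimes \ell}$, and $[P_\emptyset]$ is the highest weight vector of top weight in $(\C^2)^{\otimes m}$. Consequently $[\mathscr{T}P_\emptyset]$ is the Temperley--Lieb basis vector of the weight-$m$ highest weight subspace of $(\C^2)^{\otimes \ell}$ indexed by $\mathscr{T}$. These vectors are classically linearly independent as $\mathscr{T}$ ranges over all flat $(\ell,m)$-tangles with no caps, so the corresponding modules are pairwise non-isomorphic.

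For completeness, Lemma \ref{perp-hw} together with induction on $\ell$ shows that every highest weight simple of weight $m$ in $T^\ell\mmod$ appears as a summand of some cup-image of a highest weight simple in $T^{\ell'}\mmod$ with $\ell'<\ell$; by the simplicity already established, this summand is the full cup-image. The number of non-crossing $(\ell,m)$-matchings is the ballot number $\binom{\ell}{(\ell-m)/2}-\binom{\ell}{(\ell-m)/2-1}$, equal to the multiplicity of $V_m$ in $(\C^2)^{\otimes \ell}$ and hence, by the decategorification theorem, to the number of isomorphism classes of highest weight simples of weight $m$. Matching these counts closes the argument.

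The principal obstacle is the first step: promoting the Grothendieck-group calculation to an actual module isomorphism by showing $\mathbb{K}_iL$ is concentrated in degree zero and simple, rather than merely a complex with the right class. One approach is to compute $\mathfrak{K}_i\Lotimes L$ explicitly using the projective resolution of $\mathfrak{K}_i$ given in the text and check term by term that the higher cohomology vanishes; another is to invoke the biadjointness $(\mathbb{E}_i,\mathbb{K}_i\langle 1\rangle)$ together with the exactness of $\fE$ to constrain the cohomological support of the result.
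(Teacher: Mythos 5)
Your approach is genuinely different from the paper's. The paper works directly with the module $K_T := \mathscr{K}(T)(P_\emptyset)$: to each flat tangle $T$ it attaches a sign sequence and hence an idempotent $e_T$, then uses the cup relations (\ref{cup-1}) and the triple-point relation (\ref{red-triple-correction}) to show by an explicit diagram calculation that $\dim e_T K_T = 1$ and $e_{T'} K_T = 0$ for $T' > T$; together with the classification from \cite{Webmerged} of highest weight simples by their $e_T$-support and the self-duality of $K_T$, this forces $K_T \cong L_T$ outright. You instead propose an induction on the number of cups combined with decategorification and a counting argument. Your completeness step (ballot numbers match the multiplicity of $V_m$) and your distinctness step (linear independence of the Temperley--Lieb vectors) are fine \emph{given} simplicity.

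The gap is in the simplicity step, and it is not merely the cohomological-concentration issue you flag. You write that $\mathbb{K}_iL$ is cyclic and that ``since its Grothendieck class will turn out to be a single basis vector \dots it must be irreducible.'' This inference does not hold: the standard modules $S_\kappa$ also have classes equal to single basis vectors (the pure tensors $v_\kappa$) and are typically reducible. More to the point, the Temperley--Lieb basis of the weight-$m$ highest weight subspace of $(\C^2)^{\otimes\ell}$ is not a priori the basis given by classes of highest weight simples --- the content of the lemma is precisely that the cup images realize those basis vectors as simples, so appealing to the identification is circular. What is needed is a module-theoretic argument, and the paper supplies exactly one: $K_T$ is generated by the one-dimensional space $e_T K_T$ (so has a unique simple quotient, which must be $L_T$ by the $e_T/e_{T'}$ computation), and $K_T$ is self-dual, which makes $L_T$ a submodule as well; since $\dim e_T K_T = 1$ this is only possible if $K_T = L_T$. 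Your outline could be repaired by substituting this self-duality-plus-cyclicity argument for the Grothendieck-class one, but as written it does not establish simplicity.

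The concentration-in-degree-zero issue you correctly identify as an obstacle is real but secondary; it is handled in the paper implicitly by the diagrammatic description of $K_T$ and in general by the fact that the cup bimodules carry standard filtrations, so the derived tensor collapses on the objects in question.
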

\begin{proof}
  Since the cup functors intertwine the categorification functors
  $\fE$ and $\fF$, the
  image of $P_\emptyset$ under any flat tangle is highest weight.  In
  particular, any composition factor of such a module is highest
  weight.

We attach a 
sign sequence to a flat $(\ell,m)$ tangle $\mathcal{T}$ with no caps above by
putting a $+$ above each stand which goes from the bottom to the top
and over the right end of each cup, and a $-$ over the left end of
each cup.  We can consider this sequence as an element of the tensor product
of $\ell$ copies of the two-element crystal $\{+,-\}$ of
$\mathbb{C}^2$ (see \cite[\S 4.4]{HongKang}). In this crystal, the sequence is highest weight,  as there is no $-$ sign not
canceled by a $+$ to its right.  The action of the Kashiwara operator $\tilde{e}_i$
on the weight string generated by this element 
changes the rightmost $-$ on top of a through-strand to a $+$, leaving
the cups unchanged.

We can associate an idempotent $e_{\mathcal{T}}$ in $T^\ell$ to a flat $(\ell,m)$ tangle with no caps ${\mathcal{T}}$: we
replace each $-$ by a red strand with a black to its right, and each
$+$ by just a red strand.  We equip the set of these sign sequences
with a partial order by the rule
that $-+>+-$.    We can convert a sign sequence to a partition  in a
box $\la_{\mathcal{T}}$, by
replacing each $+$ with a horizontal line segment and each $-$ by a
vertical line segment, and considering this as the boundary of the
Young diagram (as in Section \ref{sec:decategorification}).
In this case, we have that  $\la \geq \mu$ if the diagram of $\la$
fits inside that of $\mu$, which is the same as the order on cells in
the cellular basis of Section \ref{sec:natural-basis}.
By Corollary  \ref{cellular consequences}, there is a unique highest weight simple such that
$\dim e_{\mathcal{T}}L_{\mathcal{T}}=1$ and $e_{{\mathcal
    T}'}L_{\mathcal T}=0$ for ${\mathcal T}'>{\mathcal T}$, which is
the unique simple quotient of the corresponding cell module $S_{\la_{\mathcal T}}$.

Consider the module $K_{\mathcal T}:=\mathscr{K}({\mathcal T})(P_\emptyset)$.  We can easily calculate that $\dim e_{\mathcal T}K_{\mathcal T}
\leq 1$,
since this space is spanned by the diagrams 
where the black strand from each cup follows the left side up to the
top.  In one example, this is the resulting diagram:
\begin{equation}\label{cups}
     \begin{tikzpicture}[very thick,baseline]
      \draw (.5,.4) to[out=90,in=-90] (.3,1); 
\draw (2.5,.4) to[out=90,in=-90] (2.3,1); 
\draw (1.25,-.4) to[out=165,in=-90] (-.85,1); 
\draw[wei] (0,1) to [out=-90,in=180] (.5,.4) to[out=0,in=-90]
      (1,1); \draw (.5,0) to[out=130,in=-90] (-.3,1); 
    \draw[wei] (-.5,1) to [out=-90,in=180] (.5,0) to[out=0,in=-90]
      (1.5,1);
\draw[wei] (2,1) to [out=-90,in=180] (2.5,.4) to[out=0,in=-90]
      (3,1);
         \draw[wei] (-1,1) to [out=-90,in=180] (1.25,-.4) to[out=0,in=-90]
      (3.5,1);
\end{tikzpicture}
\end{equation}
Any other diagram $d$ in  $e_{\mathcal T}K_{\mathcal T}$ must have a black strand which
passes through the left side of its cup.  Using the relations, we can
push this crossing lower, until it is the first crossing on this black
strand.  Correction terms will appear from (\ref{red-triple-correction}), but
these will have fewer red/black crossings.  The relations
(\ref{cup-1}) imply that the diagram where  the black strand passes
through the left side of the cup is 0, so we can write $d$ as a
sum of diagrams with fewer red/black crossings.  By induction, we may
assume that there are no such crossings, and indeed the diagram we indicated in
\eqref{cups} spans.

Furthermore, this diagram generates the module $K_{\mathcal T}$; in order to see
this, pull the bottom of each cup toward the bottom of the diagram,
making sure its minimum ends up to the right of the black strand for
any cup in which it is nested.  Eventually you will reach a Stendhal
diagram applied to $e_{\mathcal T}K_{\mathcal T}$.  Since the module $K_{\mathcal T}$ is not zero (it
categorifies a non-zero vector), this shows that $\dim e_{\mathcal T}K_{\mathcal T}=1$.

An argument like that above shows that $e_{{\mathcal T}'}$ with ${\mathcal T}'>{\mathcal T}$ kills this module,
since there is no diagram with the correct top which doesn't have a
black strand passed through the left side of its cup.  Thus, $L_{\mathcal T}$
must be a quotient of $K_{\mathcal T}$.

The module $K_{\mathcal T}$ is self-dual, so $L_{\mathcal T}$ also appears as a submodule.  Since
$\dim e_{\mathcal T}K_{\mathcal T}=1$, this is only possible if
$K_{\mathcal T}=L_{\mathcal T}$. We see from Corollary \ref{cellular
  consequences} that if $L_{\mathcal T}\cong L_{\mathcal T'}$, then we
must have that $\la_{\mathcal T}= \la_{\mathcal T'}$.    Since different sign sequences
result in different partitions, we must also have that
$\mathcal T=\mathcal T'$, which proves the desired irredundancy.
\end{proof}

\begin{theorem}\label{one-projector}
  The categorified Jones-Wenzl projector $\mathsf{P}_\ell$ is sent by $\gamma$ to the projection
  $\pi_\ell=T^\ell e_0 T^\ell\Lotimes_{T^\ell}-$ to
  the subcategory $\mathcal{S}_0$.
\end{theorem}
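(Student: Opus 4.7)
The plan is to identify both $\gamma(\mathsf{P}_\ell)$ and $\pi_\ell$ as the projection of $D^-(T^\ell\mmod)$ onto $\mathcal{S}_0$ in the semi-orthogonal decomposition $D^-(T^\ell\mmod) = \langle \mathcal{S}_0^\perp, \mathcal{S}_0 \rangle$, and then to invoke Cooper--Krushkal's uniqueness of the categorified Jones--Wenzl projector. Recall that $\mathsf{P}_\ell$ is characterized, up to homotopy equivalence, as the unique complex in $\mathcal{\dot{BN}}$ equipped with a chain map $\mathsf{P}_\ell \to \operatorname{id}$ such that (i) $\mathsf{P}_\ell \circ e_j \simeq 0$ for every turn-back $e_j$, and (ii) the cone of $\mathsf{P}_\ell \to \operatorname{id}$ consists of diagrams containing at least one turn-back. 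Since every turn-back factors as a composition of a cap followed by a cup, and since by Lemmata \ref{perp-hw} and \ref{cup-simple} the subcategory $\mathcal{S}_0^\perp$ is exactly the triangulated subcategory of $D^-(T^\ell\mmod)$ generated by the images of the cup functors, (i) and (ii) translate under $\gamma$ into the statement that $\gamma(\mathsf{P}_\ell)$ has image in $\mathcal{S}_0$ while the cone of $\gamma(\mathsf{P}_\ell) \to \operatorname{id}$ lies in $\mathcal{S}_0^\perp$.

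I would then verify that $\pi_\ell$ has exactly these properties. The inclusion $T^\ell e_0 T^\ell \hookrightarrow T^\ell$ supplies a natural transformation $\pi_\ell \to \operatorname{id}$ whose cone is $(T^\ell/T^\ell e_0 T^\ell)\Lotimes_{T^\ell}-$; the quotient bimodule is annihilated by $e_0$, so the cone does land in $\mathcal{S}_0^\perp$. The image of $\pi_\ell$ is generated by $T^\ell e_0$ and thus lies in $\mathcal{S}_0$, and on $\mathcal{S}_0$ the map $\pi_\ell \to \operatorname{id}$ is an isomorphism, so $\pi_\ell$ is an idempotent projection onto $\mathcal{S}_0$. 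The key remaining check is that $\pi_\ell$ kills each cup functor $\mathbb{K}_i$; this reduces to the identity $e_0 \cdot \mathfrak{K}_i = 0$, which is immediate from the shape of $+_i$-Stendhal diagrams: each such generator has a black strand emerging from the red cup between the $(i+1)$st and $(i+2)$nd red strands at $y=1$, so the top function $\kappa$ satisfies $\kappa(i+2) \geq 1$, while the bottom of the idempotent $e_0$ demands $\kappa \equiv 0$; the product therefore vanishes on every generator. Combined with the factorization of each turn-back through a cup, this gives $\pi_\ell \circ e_j \simeq 0$ for every turn-back.

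Combining these verifications with Cooper--Krushkal's uniqueness theorem identifies $\gamma(\mathsf{P}_\ell)$ with $\pi_\ell$. The main technical obstacle is the derived-tensor bookkeeping needed to rigorously establish the idempotence $T^\ell e_0 T^\ell \Lotimes_{T^\ell} T^\ell e_0 T^\ell \simeq T^\ell e_0 T^\ell$ and the vanishing of the higher $\operatorname{Tor}$ groups that appear when one composes $\pi_\ell$ with the cup bimodules $\mathfrak{K}_i$. Both points reduce to showing that $e_0 T^\ell$ is flat as a right $R^\ell$-module, a statement one can extract from the cellular structure on $T^\ell$ described in Section \ref{sec:natural-basis} or, more conceptually, from the geometric incarnation of $e_0 T^\ell$ as the Borel--Moore homology of a smooth fibration over a point in the Grassmannian stratification of Section \ref{sec:geom-grassm}.
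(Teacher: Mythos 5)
Your proposal flips the paper's argument around: the paper characterizes $\pi_\ell$ abstractly (identity on $\mathcal{S}_0$, kills $\mathcal{S}_0^\perp$) and shows $\gamma(\mathsf{P}_\ell)$ satisfies these two properties, using commutativity with $\fE,\fF$ together with Lemmata~\ref{perp-hw} and~\ref{cup-simple}. You instead want to translate the Cooper--Krushkal axioms for $\mathsf{P}_\ell$ into properties of $\gamma(\mathsf{P}_\ell)$, check that $\pi_\ell$ has them, and conclude by uniqueness. The direction is legitimate, and your verifications that $e_0\cdot\mathfrak{K}_i=0$ and that $e_0$ annihilates $T^\ell/T^\ell e_0 T^\ell$ are correct, but the proposal has two substantive problems.

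First, the final appeal to ``Cooper--Krushkal's uniqueness theorem'' is the wrong tool. That theorem asserts uniqueness of $\mathsf{P}_\ell$ up to homotopy \emph{as a complex in} $\mathcal{\dot{BN}}$; it says nothing about endofunctors of $D^-(T^\ell\mmod)$, and $\pi_\ell=T^\ell e_0 T^\ell\Lotimes-$ is not a priori in the image of $\gamma$, so there is nothing for it to compare against. What you actually need at this step is the uniqueness of the projection onto $\mathcal{S}_0$ in the recollement $D^-(T^\ell\mmod)=\langle\mathcal{S}_0^\perp,\mathcal{S}_0\rangle$ (equivalently, uniqueness of the truncation triangle), which is precisely what the paper's opening sentence of the proof invokes. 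Relatedly, your translation ``(i) turn-back annihilation $\Rightarrow$ image of $\gamma(\mathsf{P}_\ell)$ lies in $\mathcal{S}_0$'' is not what follows directly: pre-composition $\mathsf{P}_\ell\circ e_j\simeq 0$ translates to $\gamma(\mathsf{P}_\ell)$ \emph{killing} $\mathcal{S}_0^\perp$ (via Lemmata~\ref{perp-hw},~\ref{cup-simple}), while ``image lands in $\mathcal{S}_0$'' would require the two-sided absorption $e_j\circ\mathsf{P}_\ell\simeq 0$ plus the coincidence of left and right orthogonals. The paper's pair of properties (identity on $\mathcal{S}_0$, kills $\mathcal{S}_0^\perp$) is easier to extract directly.

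Second, the ``main technical obstacle'' you identify is resolved with a false claim: $T^\ell e_0$ is \emph{not} flat (equivalently, not projective, since these agree for finitely generated modules over the finite-dimensional algebra $R^\ell$) as a right $R^\ell$-module --- the paper says so explicitly in the discussion after the Lemma on $\pi_\ell$, and this is exactly why derived tensor products are used throughout. The way around this is the paper's: one does not need to check idempotence of $T^\ell e_0 T^\ell\Lotimes-$ directly; rather, one identifies this functor as the recollement projection because $\mathcal{S}_0$ is generated by summands of $T^\ell e_0$, and idempotence is then automatic.
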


\begin{proof}
The projection is distinguished by the fact that it is isomorphic to
the identity functor on $\mathcal{S}_0$ and kills all objects in
$\mathcal{S}_0^\perp$.  Thus, we need only check that $\mathsf{P}_\ell$ also has these properties.  

The images of all 1-morphisms in $\mathcal{{BN}}$ commute with the
functors $\fE$ and $\fF$ by Theorem \ref{th:Chatav}. Since $\mathcal{S}_0$ is generated by
$\fF^kP_\emptyset$ and $\mathsf{P}_\ell$ acts by the identity
on $T^\ell_\ell\gmod$,  it also acts by the identity on $\mathcal{S}_0$.

On the other hand, $\mathsf{P}_\ell$ kills the image of
any cup functor, since it is invertible under turn-backs.  Thus, by
Lemma \ref{cup-simple}, it kills all highest weight simples of weight
$<\ell$. Since it commutes with categorification functors, it kills
the triangulated category generated by categorification functors
applied to these simples. In turn,
by Lemma \ref{perp-hw}, this category is $\mathcal{S}_0^\perp$.  This
completes the proof.
\end{proof}
 In \cite[\S 8]{Webmerged}, we define a homology theory categorifying the
 colored Jones polynomial which uses generalizations of the algebras
 $T^\ell$.  For each sequence of positive integers
 $\mathbf{n}=(n_1,\dots, n_m)$ with $\ell=\sum n_i$, we have an idempotent $e_{\mathbf{n}}$
 which is the sum of all idempotents where there is a group of $n_1$ red
 strands, then some number of black strands, $n_2$ red strands, etc.
 In terms of $\kappa$, this means that the first $n_1$ values of
 $\kappa$ are the same, then the next $n_2$, etc.  
That is, we have the sum of the idempotents associated to any sequence
$(b_1,\dots, b_m)$  as in the diagram below:
\begin{equation*}
  \tikz[very thick, scale=1.5]{
\draw[wei] (0,-.5) -- (0,.5);\draw[wei] (1,-.5) -- (1,.5);\node at (.5,0)
{$\cdots$}; \draw[decorate,decoration=brace,-] (-.1,.7) --
    node[above,midway,scale=.75]{$n_1$ strands} (1.1,.7);
\draw (1.5,-.5) -- (1.5,.5);\draw (2.5,-.5) -- (2.5,.5);\node at (2,0)
{$\cdots$}; \draw[decorate,decoration=brace,-] (1.4,.7) --
    node[above,midway,scale=.75]{$b_1$ strands} (2.6,.7);
\draw[wei] (3,-.5) -- (3,.5);\draw[wei] (4,-.5) -- (4,.5);\node at (3.5,0)
{$\cdots$}; \draw[decorate,decoration=brace,-] (2.9,.7) --
    node[above,midway,scale=.75]{$n_2$ strands} (4.1,.7);
\draw (4.5,-.5) -- (4.5,.5);\draw (5.5,-.5) -- (5.5,.5);\node at (5,0)
{$\cdots$}; \draw[decorate,decoration=brace,-] (4.4,.7) --
    node[above,midway,scale=.75]{$b_2$ strands} (5.6,.7);
\node at (6,0)
{$\cdots$};
}
\end{equation*}
The algebra $T^{\mathbf{n}}=e_{\mathbf{n}}T^\ell e_{\mathbf{n}}$ can
be represented using Stendhal diagrams as well, where we compress each
group of $n_i$ strands between which no blacks are allowed into a single
strand, labeled with $n_i$. This algebra naturally appears in the
construction of categorified colored Jones polynomials since its
Grothendieck group is a tensor product of simple $\mathfrak{sl}_2$
modules.  

 \begin{proposition}
   The horizontal composition $\mathsf{P}_{n_1}\otimes \cdots \otimes
   \mathsf{P}_{n_m}$ of 1-morphisms in ${\mathcal{BN}}$ is sent by $\gamma$ to the projection $T^\ell
   e_{\mathbf{n}} T^\ell\Lotimes -$.  
 \end{proposition}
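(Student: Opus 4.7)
The plan is to generalize the proof of Theorem \ref{one-projector}. Let $\mathcal{S}_{\mathbf{n}}\subset D^-(T^\ell\mmod)$ denote the full triangulated subcategory generated by the summands of $T^\ell e_{\mathbf{n}}$, and $\mathcal{S}_{\mathbf{n}}^\perp=\{M : e_{\mathbf{n}}M=0\}$ its orthogonal. I will establish three claims: (a) the functor $T^\ell e_{\mathbf{n}}T^\ell\Lotimes_{T^\ell}-$ is the projection onto $\mathcal{S}_{\mathbf{n}}$ along $\mathcal{S}_{\mathbf{n}}^\perp$; (b) the $\gamma$-image of $\mathsf{P}_{n_1}\otimes\cdots\otimes\mathsf{P}_{n_m}$ acts as the identity on $\mathcal{S}_{\mathbf{n}}$; (c) the same composition kills $\mathcal{S}_{\mathbf{n}}^\perp$.

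Claim (a) should follow from the standard recollement attached to the idempotent $e_{\mathbf{n}}$: since $T^\ell$ is quasi-hereditary (hence of finite global dimension), $D^-(T^\ell\mmod)$ splits as a semi-orthogonal decomposition of $\mathcal{S}_{\mathbf{n}}$ and $\mathcal{S}_{\mathbf{n}}^\perp$, and $T^\ell e_{\mathbf{n}}T^\ell\Lotimes_{T^\ell}-$ realizes the projection exactly as $T^\ell e_0 T^\ell\Lotimes_{T^\ell}-$ did in the case $\mathbf{n}=(\ell)$.

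For claims (b) and (c), the factors $\mathsf{P}_{n_i}$ are $1$-morphisms in $\dot{\mathcal{BN}}$ supported on disjoint blocks of strands, so their $\gamma$-images $\pi^{(i)}$ commute pairwise as endofunctors of $D^-(T^\ell\mmod)$. A block-local version of Theorem \ref{one-projector} should identify $\pi^{(i)}$ with the projection onto $\mathcal{S}_{\mathbf{n}^{(i)}}$, where $\mathbf{n}^{(i)}=(1,\ldots,1,n_i,1,\ldots,1)$ groups only the $i$-th block. Since each $\pi^{(i)}$ is idempotent and they commute, their composition is the projection onto $\bigcap_i\mathcal{S}_{\mathbf{n}^{(i)}}$. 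The elementary identity $\prod_i e_{\mathbf{n}^{(i)}}=e_{\mathbf{n}}$ combined with (a) then yields $\bigcap_i\mathcal{S}_{\mathbf{n}^{(i)}}=\mathcal{S}_{\mathbf{n}}$, and the composition is the desired projection $T^\ell e_{\mathbf{n}}T^\ell\Lotimes_{T^\ell}-$.

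The main obstacle is the block-local analog of Theorem \ref{one-projector}. That theorem used Lemmas \ref{perp-hw} and \ref{cup-simple}, which generate $\mathcal{S}_0^\perp$ by highest-weight simples of weight $<\ell$ expressed as images of cup functors. For the block-local statement, one needs to describe $\mathcal{S}_{\mathbf{n}^{(i)}}^\perp$ as generated, under categorification functors acting outside the $i$-th block, by modules obtained as images of cup functors whose cups lie in the $i$-th block while the other strands are unconstrained. Once such modules are produced, the turn-back property of $\mathsf{P}_{n_i}$ kills them. The substantive work lies in extending Lemma \ref{cup-simple} from a single tensor product to the multi-block setting: one must show that capping off \emph{within} a single block is enough to produce all of $\mathcal{S}_{\mathbf{n}^{(i)}}^\perp$, which should follow by combining the block-local crystal combinatorics of Lemma \ref{cup-simple} with the categorification action on the remaining blocks.
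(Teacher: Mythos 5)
Your high-level strategy — characterize the projection by its action on the subcategory $\mathcal{S}_{\mathbf{n}}$ and its orthogonal, then verify that the $\gamma$-image of $\mathsf{P}_{n_1}\otimes\cdots\otimes\mathsf{P}_{n_m}$ has both properties — is indeed the paper's strategy. Claim (a) and the identification of the orthogonal as cup-images killed by turnbacks match the paper. But your mechanism for claim (b), the ``identity on $\mathcal{S}_{\mathbf{n}}$'' part, diverges substantially, and it is precisely there that the unfilled gap lies. You propose factoring through commuting block-local projectors $\pi^{(i)}$ each identified with the projection onto $\mathcal{S}_{\mathbf{n}^{(i)}}$; but, as you acknowledge, the block-local analog of Theorem~\ref{one-projector} is a genuinely new statement, not a formal consequence, and filling it would require re-running the Lemma~\ref{cup-simple}/Lemma~\ref{perp-hw} machinery in a setting with spectator strands — which is comparable in difficulty to proving the Proposition directly. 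There is also an unexamined step in passing from $\prod_i e_{\mathbf{n}^{(i)}}=e_{\mathbf{n}}$ to $\bigcap_i\mathcal{S}_{\mathbf{n}^{(i)}}=\mathcal{S}_{\mathbf{n}}$: equality of idempotents controls the orthogonals $\{M:e_{\mathbf{n}^{(i)}}M=0\}$ cleanly, but the intersection of the generated triangulated subcategories is not automatically the one generated by $T^\ell e_{\mathbf{n}}$, and this deserves an argument.

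The paper avoids all of this by not decomposing the projector into block-local pieces. Instead it uses the standardization functor $\mathbf{S}^{\mathbf{n}}$ from \cite[\S 4]{Webmerged}: since $\mathsf{P}_{n_1}\otimes\cdots\otimes\mathsf{P}_{n_m}$ commutes with $\mathbf{S}^{\mathbf{n}}$ and acts by the identity on projective-injectives over $T^{n_1}\otimes\cdots\otimes T^{n_m}$, it acts by the identity on their standardizations, which generate $\mathcal{S}_{\mathbf{n}}$. This replaces your whole chain of block-local reductions with one compatibility statement, at the cost of importing the standardization technology. The ``kill the orthogonal'' step is then handled just as you anticipated: the orthogonal is generated by cup diagrams with all cups internal to a single block, and contractibility under turnbacks applies. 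So your outline is consistent with the paper's, but the ingredient that makes the identity-on-$\mathcal{S}_{\mathbf{n}}$ step go through is standardization, not a block-local projector theorem, and without either of those your claim (b) is not proved.
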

 \begin{proof}
   Much like that of Theorem \ref{one-projector} above, the proof is by checking that both
   functors act by the identity on the subcategory generated by
   $T^\ell e_{\mathbf{n}} $ and trivially on its orthogonal.  

   The action on the subcategory generated by $T^\ell e_{\mathbf{n}} $
   can be understood by studying the actions on standardizations of
   projective-injectives of $T^{n_1}\otimes \cdots \otimes T^{n_m}$;
   this is the identity since \[\mathsf{P}_{n_1}\otimes \cdots \otimes
   \mathsf{P}_{n_m}\circ \mathbf{S}^{\mathbf{n}}\cong
   \mathbf{S}^{\mathbf{n}}\circ \mathsf{P}_{n_1}\otimes \cdots \otimes
   \mathsf{P}_{n_m}\] where $\mathbf{S}^{\mathbf{n}} $ is the
   standardization functor from \cite[\S 5]{Webmerged}. Since the projection on the right-hand side sends
   each projective-injective to itself, $\mathsf{P}_{n_1}\otimes \cdots \otimes
   \mathsf{P}_{n_m}$ must act by the identity on the category
   generated by these standardizations.

   On the other hand, the orthogonal to this category is generated by
   the images of cup diagrams with no cups that go between different
   groups of red strands.  These are killed by $\mathsf{P}_{n_1}\otimes \cdots \otimes
   \mathsf{P}_{n_m}$ by contractibility under turnbacks.
 \end{proof}
     The colored Jones homology theory in \cite[\S 8]{Webmerged} is defined
   using tensor product with certain bimodules corresponding to the
   cups, caps and crossings.  In fact their definition is essentially
   exactly like that of the functors $\mathbb{B}_i$, $\mathbb{K}_i$
   and $\mathbb E_{i}$ above.  

Let $\mathcal{T}$ be a tangle with components
   labeled by integers, and $\mathcal{T}'$ its cabling, with each strand
   replaced by as many strands as its label, as illustrated for a
   single crossing below.
\begin{equation*}
  \tikz[very thick, scale=1.5]{
\draw[wei] (0,-.5) --  node [midway,circle, fill=white]{}  (2,.5);
\draw[wei] (2,-.5) --(0,.5); \draw[->] (3,0) -- (4,0); \node at (5,0) {\mbox{}};
}
\tikz[very thick, scale=1.5]{
\node at (.8,.32) {$\cdots$}; \node at (.8,-.32)
{$\cdots$}; \node at (2.2,.32) {$\cdots$}; \node at (2.2,-.32)
{$\cdots$};
\draw[decorate,decoration=brace,-] (-.1,.7) --
    node[above,midway,scale=.75]{$n_1$ strands} (1.1,.7);
\draw[wei] (0,-.5) --  node [pos=.75,circle, fill=white]{}  node [pos=.5,circle, fill=white]{}   (2,.5);\draw[wei] (1,-.5) -- node [pos=.25,circle, fill=white]{}  node [pos=.5,circle, fill=white]{}  (3,.5); \draw[decorate,decoration=brace,-] (1.9,.7) --
    node[above,midway,scale=.75]{$n_2$ strands} (3.1,.7);
\draw[wei] (2,-.5) -- (0,.5);\draw[wei] (3,-.5) -- (1,.5);
}
\end{equation*}
 Let $(n_1,\dots,n_m)$ and
   $(n_1',\dots, n'_{m'})$ be the sequence of labels at the top and bottom of the
   tangle.

   Then we have the functor attached to this tangle by the homology
   theory of \cite{Webmerged}, which we denote
   $\mathscr{K}^{\mathbf{n}}(\mathcal{T})$, and the functor $\mathscr{K}(\mathcal{T}')$ attached to the
   cabling by the theory we have discussed in Sections \ref{sec:braiding}--\ref{sec:comp-with-khov}.  Assume now that $\mathcal{T}$
   is a single crossing, cup or cap. 
   \begin{lemma}\label{bimodules}
     As bimodules over $T^{\mathbf{n}}$ and $T^{\mathbf{n}'}$, 
     $\mathscr{K}^{\mathbf{n}}(\mathcal{T})$ and $e_{\mathbf{n}'}\mathscr{K}(\mathcal{T}')e_{\mathbf{n}}$ are isomorphic.
   \end{lemma}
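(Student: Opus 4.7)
The plan is to verify the claim separately for each of the three elementary tangle types: cup, cap, and crossing. In each case I would exhibit an explicit bimodule isomorphism using the diagrammatic presentations on both sides, and then confirm it by comparing cellular bases.

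Setup: since $T^{\mathbf{n}} = e_{\mathbf{n}} T^\ell e_{\mathbf{n}}$ is a corner of $T^\ell$, any $T^\ell$-$T^\ell$-bimodule $B$ produces a $T^{\mathbf{n}'}$-$T^{\mathbf{n}}$-bimodule $e_{\mathbf{n}'} B e_{\mathbf{n}}$. The bimodule $\mathscr{K}^{\mathbf{n}}(\tau)$, as defined in \cite[\S 7]{Webmerged}, is a quotient of a space of ``thick'' Stendhal-type diagrams by relations that are formally identical to those appearing in this paper, but with each red strand carrying a multiplicity $n_i$. The map I propose sends a thick diagram to the uncolored diagram obtained by replacing each red strand of label $n_i$ by a parallel bundle of $n_i$ thin red strands; the idempotents $e_{\mathbf{n}}$ and $e_{\mathbf{n}'}$ prohibit any black strand from slipping between strands in the same bundle, so the compression map is well-defined in both directions.

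For cups and caps the thick cup bimodule is generated by $+_i$-type diagrams carrying a labeled cup modulo the analogues of (\ref{cup-1}--\ref{cup-2}). After cabling, the corresponding cabled cup is an iterated composition of thin cup bimodules $\mathfrak{K}_{i_\bullet}$ which nest $n_j$ red cups inside one another; the relations (\ref{cup-1}--\ref{cup-2}) applied to the innermost pair propagate outward to give precisely the thick relations, and similarly for caps. For the crossing bimodule, the cabled version is $\bra_{\tilde\sigma}$ for $\tilde\sigma$ the permutation that swaps two adjacent blocks of sizes $n_j$ and $n_{j+1}$; after compression of bundles the relations (\ref{side-dumb}--\ref{top-dumb}) reduce to the corresponding thick braiding relations.

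The main obstacle is confirming that the compression map is an isomorphism rather than merely a well-defined surjection. I would handle this by comparing cellular bases: both $\mathscr{K}^{\mathbf{n}}(\tau)$ and $\mathscr{K}(\tau')$ possess cellular bases in the sense of \cite{SWschur} (as described in Section \ref{sec:natural-basis} for the algebra, with the minor modifications appropriate for bimodules of cups, caps and crossings as in \cite{Webmerged}), indexed by pairs of backdrops on Young diagrams. The idempotents $e_{\mathbf{n}'}$ and $e_{\mathbf{n}}$ cut the cabled basis down to those pairs whose backdrops respect the groupings $\mathbf{n}$ and $\mathbf{n}'$, which is exactly the index set for the thick cellular basis. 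A direct bijection on basis vectors then upgrades the compression map to an isomorphism, completing the proof.
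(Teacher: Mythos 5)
Your proposal diverges substantially from the paper's argument, and as written it has a real gap. The paper does not attempt a global ``compression'' map: for the crossing it simply invokes the argument of \cite[3.17]{Webmerged}, and for the cup it first reduces, via the standardization functors, to the situation where there are no red strands outside the cabled cup, so that the claim becomes the assertion that $e_{n,n}$ applied to the image of $n$ nested thin cups yields the unique invariant simple over $T^{(n,n)}$. It then establishes this by a Grothendieck-group computation: multiplying by $e_{n,n}$ is exact and categorifies the projection onto $\operatorname{Sym}^n(\C^2)\otimes\operatorname{Sym}^n(\C^2)$, the class of the resulting module matches the class of the invariant simple (checked on a single monomial coefficient), and since it is an honest module rather than a complex, it must \emph{be} that simple. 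This sidesteps entirely the need for any explicit diagrammatic identification.

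The gap in your approach is that the compression map is not well-defined as stated. The idempotents $e_{\mathbf{n}}$ and $e_{\mathbf{n}'}$ only constrain what happens at the top and bottom boundary of a diagram; they do not prevent black strands from crossing in and out of a bundle of red strands in the interior of the diagram, and such diagrams are genuinely present in $e_{\mathbf{n}'}\mathscr{K}(\tau')e_{\mathbf{n}}$ (indeed, the cellular basis elements from Section~\ref{sec:natural-basis} have precisely this feature). You therefore cannot simply ``replace each bundle by a single thick strand''; you would first need a normal form or straightening argument showing every such element equals a linear combination of diagrams in which bundles are never pierced, and you haven't provided one. Similarly, the claimed comparison of cellular bases is asserted but not carried out: you would need to show both sides possess cellular bases, that the index sets biject, and that your map respects them --- but the map itself isn't yet defined on general elements, so the comparison cannot begin. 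Your strategy is plausible in spirit and might be made to work with a serious straightening lemma, but at present it is an outline rather than a proof, and the paper's Grothendieck-group route is both shorter and avoids the well-definedness issue entirely.
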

   \begin{proof}
     For the braiding map, this follows from the same argument as in
     \cite[4.19]{Webmerged}.  For the cup and cap, these are
     equivalent so we need only consider one.  The cabling of the cup
     is $n$ nested cups.  As usual, by considering the action on
     standardizations, we can reduce to the case where there are not
     any other red strands.  
    
     In this case, we need to show that these nested cups give us the
     unique invariant simple for the algebra $T^{(n,n)}\cong e_{(n,n)}T^{2n}e_{(n,n)}$ after being multiplied by
     $e_{(n,n)}$.  Multiplying by this idempotent is an exact functor,
     and it categorifies the projection $(\C^2)^{\otimes 2\ell}\to
     \operatorname{Sym}^\ell(\C^2)\otimes
     \operatorname{Sym}^\ell(\C^2)$.  In particular, it sends the
     image of nested cups to an invariant vector in $\operatorname{Sym}^\ell(\C^2)\otimes
     \operatorname{Sym}^\ell(\C^2)$ which is the class of invariant
     simple.  We can check this by looking at the coefficient of any
     monomial in the class, for example that of \[\left[\begin{smallmatrix}
  0\\1
\end{smallmatrix}\right]\otimes\cdots \otimes \left[\begin{smallmatrix}
  0\\1
\end{smallmatrix}\right]\otimes \left[\begin{smallmatrix}
  1\\0
\end{smallmatrix}\right]\otimes\cdots \otimes \left[\begin{smallmatrix}
  1\\0
\end{smallmatrix}\right]\] and checking that it is 1.  

Thus $e_{(n,n)}L_{n,n}$ is an honest module whose class in the
Grothendieck group coincides with the correct simple.  This is only
possible if it is the desired simple itself.
   \end{proof}
 \begin{corollary}
   The colored Jones homology theories defined in \cite{Webmerged} and
   \cite{CoKr} agree.
 \end{corollary}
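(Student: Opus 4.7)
The plan is to reduce the equality of the two colored homology theories to the local identifications of bimodules supplied by Lemma \ref{bimodules}, by interpreting the Cooper--Krushkal construction entirely inside the 2-representation $\gamma$ and then peeling off the Jones--Wenzl projectors piece by piece. First I would unwind the Cooper--Krushkal recipe applied to a link $L$ colored by $\mathbf{n}=(n_1,\dots,n_m)$: cable $L$ to obtain $L'$, run Bar-Natan's Khovanov complex on $L'$ in $\mathcal{\dot{BN}}$, and insert the categorified Jones--Wenzl projector $\mathsf{P}_{n_i}$ along each component. By Theorem \ref{Khovanov}, passing this complex through $\gamma$ yields $\mathscr{K}(L')$ applied to $P_\emptyset$; by Theorem \ref{one-projector} and the horizontal-composition proposition preceding Lemma \ref{bimodules}, the inserted projectors become derived tensor product with $T^\ell e_{\mathbf{n}} T^\ell$. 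So the Cooper--Krushkal colored Jones homology of $L$ is computed by projecting $\mathscr{K}(L')(P_\emptyset)$ onto the subcategory cut out by the idempotent $e_{\mathbf{n}}$.

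Next I would choose a Morse decomposition of the tangle underlying $L$ into elementary pieces $\tau_1,\dots,\tau_r$ (cups, caps, and single crossings between colored strands), and write $\mathbf{n}_k$ for the sequence of labels on the slice above $\tau_k$. Applied to the cabling, $\mathscr{K}(L')$ factors as $\mathscr{K}(\tau_r')\circ\cdots\circ\mathscr{K}(\tau_1')$. Inserting the projectors $\mathsf{P}_{\mathbf{n}_k}$ at every intermediate slice is harmless on the Cooper--Krushkal side because $\mathsf{P}_{\mathbf{n}_k}^2\cong\mathsf{P}_{\mathbf{n}_k}$, and on our side the bimodule $T^\ell e_{\mathbf{n}_k} T^\ell$ is an idempotent with respect to derived tensor product (it is the identity on its image subcategory). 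After this insertion, the total functor becomes the derived composition of the truncated bimodules $e_{\mathbf{n}_{k+1}}\mathscr{K}(\tau_k')e_{\mathbf{n}_k}$. Lemma \ref{bimodules} identifies each such truncated bimodule with the Webster bimodule $\mathscr{K}^{\mathbf{n}}(\tau_k)$, whose derived composition over the slices is, by definition, the colored homology theory of \cite{Webmerged} applied to $L$.

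The main obstacle is verifying that the local identifications in Lemma \ref{bimodules} are not just isomorphisms piece by piece but assemble into a single isomorphism of total functors, compatible with horizontal composition of elementary tangles. This is really the assertion that the standardization-functor framework from \cite[\S 4]{Webmerged} used in the proof of Lemma \ref{bimodules} behaves well under stacking. Concretely, one has to see that the canonical generators of $e_{\mathbf{n}_{k+1}}\mathscr{K}(\tau_k')e_{\mathbf{n}_k}$ picked out in the proof of Lemma \ref{bimodules} glue, across adjacent slices, to the generators of $e_{\mathbf{n}_{k+2}}\mathscr{K}(\tau_{k+1}')e_{\mathbf{n}_{k+1}}$; this follows because the same standardization functor $\mathbf{S}^{\mathbf{n}}$ is used throughout, and since on each elementary piece the isomorphism is determined by matching highest-weight generators, it is forced to be canonical.

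Finally, specializing to a closed link (the empty tangle at top and bottom) applies the composed functor to the unit object $P_\emptyset$ and produces a complex of graded vector spaces. By the chain of identifications above, this complex coincides with the one produced by the Webster theory $\mathscr{K}^{\mathbf{n}}(L)(P_\emptyset)$, yielding the desired equality of colored Jones homologies.
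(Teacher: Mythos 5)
Your proposal follows essentially the same strategy as the paper's proof: slice the colored tangle into elementary pieces, cable, insert the projectors at each cut (justified by idempotency), distribute the projection bimodule $T^\ell e_{\mathbf{n}_k}T^\ell$ across the adjacent slices via the factorization through $T^{\mathbf{n}_k}$, and then match each truncated slice bimodule with the corresponding Webster bimodule using Lemma~\ref{bimodules}. Your explicit discussion of the gluing compatibility is a reasonable elaboration of a point the paper leaves implicit, but the underlying argument is the same.
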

 \begin{proof}
By its definition, the homology theory from \cite{CoKr} can be
obtained by taking a generic tangle projection, sliced into crossings,
cups, and caps; we'll use {\it cuts} to mean the horizontal lines
where we cut, and {\it slices} to mean the regions between two
successive ones. We let $\mathcal{T}_k$ be the tangle in the $k$th
slice from the bottom, and $\mathbf{n}_k$ the labeling of the strands
at the $k$th slice.

Now, we take this tangle's cabling, and insert a copy of
$\mathsf{P}_{n}$ at each point where a strand of label $n$ crosses one
of the horizontal cuts.  The image of this 1-morphism in Bar-Natan's category is
obtained by applying $\mathscr{K}(\mathcal{T}_k)$ for the different slices
$\mathcal{T}_k$ of the
cabled tangle with $T^\ell
   e_{\mathbf{n}_k} T^\ell\Lotimes -$ inserted at the $k$th cut.  We can do
   the 
   factorization $T^\ell
   e_{\mathbf{n_k}} T^\ell\Lotimes -\cong T^\ell
   e_{\mathbf{n_k}} \Lotimes_{T^{\mathbf{n}_k}} e_{\mathbf{n_k}}
   T^\ell\otimes -$ at each cut, and move the first factor into the
   slice above the cut, and the second factor into the slice below
   it.  Thus, for each slice $\mathcal{T}_{k}$, we obtain
   $e_{\mathbf{n}_{k-1}}\mathscr{K}(\mathcal{T}_k)e_{\mathbf{n}_k}\cong
   \mathscr{K}^{\mathbf{n}}(\mathcal{T}_k)$.  By definition, taking this
   successive derived tensor product gives the homology theory from \cite{Webmerged}.
 \end{proof}
Khovanov has also defined a categorification of the colored Jones
polynomial \cite{KhCJ}; this cannot agree with the theory defined
above, since it is finite dimensional.  The relation between these
theories seems to not be well-known at the moment.

 \bibliography{../gen}
\bibliographystyle{amsalpha}

\end{document}